\newtheorem{problem}{Problem}
\newtheorem{lemma}{Lemma}[section]
\newtheorem{theorem}{Theorem}
\newtheorem{proposition}[lemma]{Proposition}
\newtheorem{fact}[lemma]{Fact}
\theoremstyle{definition}
\newtheorem{step}{Step}
\newcommand{\bst}{\begin{step}}
\newcommand{\est}{\end{step}}
\newcommand{\acf}{algebraically closed field }
\newcommand{\acfd}{algebraically closed field}
\newcommand{\fmr}{finite Morley rank }
\newcommand{\rk}{\operatorname{rk}}
\newcommand{\stab}{{\rm stab}}
\newcommand{\fmrd}{finite Morley rank}
\newcommand{\bi}{\begin{itemize}}
\newcommand{\ei}{\end{itemize}}
\newcommand{\bpr}{\begin{proof}}
\newcommand{\epr}{\end{proof}}
\newcommand{\bq}{\begin{quote}}
\newcommand{\eq}{\end{quote}}
\newcommand{\vr}[1]{\mathbf{#1}}
\def\nn{\ensuremath \mathbb{N}}
\journal{Journal of Algebra}
\begin{document}
\begin{frontmatter}

\title{Groups of finite Morley rank with a generically sharply multiply transitive action}
\date{20 July 2018}

\author{Ay\c{s}e Berkman}
\address[AB]{Mathematics Department, Mimar Sinan Fine Arts University, Istanbul, Turkey}
\ead{ayse.berkman@msgsu.edu.tr, ayseberkman@gmail.com}

\author{Alexandre Borovik}
\address[AVB]{School of Mathematics, University of Manchester, UK}
\ead{alexandre@borovik.net}

\begin{abstract}
\footnotetext{This is the final pre-publication version of the paper: A. Berkman and A. Borovik,
\emph{Groups of finite Morley rank with a generically sharply multiply transitive action}, J. Algebra (2018), \url{https://doi.org/10.1016/j.jalgebra.2018.07.033}. Accepted for publication 28 July 2018. The manuscript will undergo copyediting, typesetting, and review of the resulting proof before it is published in its final form.}

We prove that if $G$ is a group of finite Morley rank that acts definably and generically sharply $n$-transitively on a connected abelian group $V$ of Morley rank $n$ with no involutions, then there is an algebraically closed field $F$ of characteristic $\ne 2$ such that $V$ has the structure of a vector space of dimension $n$ over $F$ and $G$ acts on $V$ as the group $\operatorname{GL}_n(F)$ in its natural action on $F^n$. \end{abstract}

\begin{keyword} finite Morley rank \sep group action
\MSC Primary 20F11, Secondary 03C60x
\end{keyword}

\end{frontmatter}

\section{Introduction}

\subsection{Groups of finite Morley rank, their actions, and binding groups}

Groups of finite Morley rank are abstract groups, possibly with additional structure,
equipped with a notion of dimension that assigns to every
definable set $X$ a natural number, called \emph{Morley rank} and
denoted by ${\rm rk}(X)$, satisfying well-known
rudimentary axioms, given for example in \cite{bn}.
Examples are furnished by algebraic groups over algebraically
closed fields, with ${\rm rk}(X)$ equal to the dimension of the
Zariski closure of $X$. Groups of finite Morley rank equipped with a definable action arise naturally as {\em binding groups} in many first order theories: for example,  Lie groups of the Picard--Vessiot theory of linear differential equations can be viewed as a special case \cite{Poisat83}. A more detailed discussion of binding groups that play in model theory a role akin to that of Galois groups could be found in the Introduction to \cite{bbpseudo}.

The present paper is one of the first steps in a research programme aimed at deeper understanding of definable actions of groups of finite Morley rank and, in particular, binding groups. To explain the programme, we need a brief overview of the current state of the classification of simple groups of finite Morley rank.

\subsection{Simple groups of finite Morley rank}

The development of the theory of groups of finite Morley rank started around 1980 in
pioneering works by Zilber~\cite{Zi-GR} and Cherlin~\cite{Ch-GSR}; they formulated what remains the central conjecture:

\bq \emph{Simple infinite groups of finite Morley rank are algebraic groups over algebraically closed fields}.
\eq

The biggest result  towards the Cherlin--Zilber Conjecture is the characterization below of algebraic groups over fields of characteristic $2$ (`even type').

\begin{fact}{\rm \cite{abc}} If a simple group $G$ of finite Morley rank
contains an infinite elementary abelian $2$-subgroup {\rm (}we say
in this situation that $G$ is of \emph{even type}{\rm )} then $G$
is isomorphic to a simple algebraic group over an algebraically
closed field of characteristic $2$. \label{th:even}
\end{fact}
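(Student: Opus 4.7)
The plan is to imitate the local-analysis strategy used in the classification of the finite simple groups, transported to the setting of finite Morley rank. The starting point is the infinite elementary abelian $2$-subgroup provided by the even type hypothesis. First, I would develop the Sylow $2$-theory for $G$: show that Sylow $2$-subgroups are conjugate, and that the connected component of a Sylow $2$-subgroup is a definable, connected, nilpotent $2$-subgroup $U$ of bounded exponent --- the ``$2$-unipotent radical''. Under the even type hypothesis, $U$ is infinite and of substantial Morley rank, so it should play the role of the unipotent radical of a Borel subgroup in a characteristic $2$ algebraic group.

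Next, I would carry out a $2$-local analysis. The main objects are normalizers of nontrivial connected $2$-subgroups, and especially centralizers of involutions. The aim is to produce ``standard components'' (quasi-simple subgroups) inside these centralizers via signalizer-functor arguments, and to show that the appropriate analogs of the $2'$-cores $O(C_G(t))$ for involutions $t$ are under control. This is where most of the heavy machinery would live: genericity of regular semisimple elements, the structure of Carter subgroups and of decent tori, and the interplay between torsion and connectedness in definable environments.

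Once enough rigid local structure has accumulated, I would construct a \emph{BN}-pair with an associated root system. The Borel subgroup $B$ would be the normalizer of a Sylow $2$-subgroup, while $N$ would come from the normalizer of a maximal decent torus; the Weyl group $N/(B \cap N)$ should turn out to be finite and Coxeter. The algebraically closed field $F$ of characteristic $2$ is then recovered from the commutator relations between opposite root subgroups, via the standard Hrushovski-style interpretation of a definable field from a suitable rank-one configuration.

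The final step is a recognition theorem: given a group of finite Morley rank carrying a spherical \emph{BN}-pair with Tits commutator relations over a definable algebraically closed field $F$ of characteristic $2$, one identifies $G$ with the $F$-points of the corresponding simple algebraic group using Steinberg generators and a Curtis--Tits style amalgamation. The main obstacle is the sheer scope of this programme. In practice one has to split into a ``generic'' high-rank case (where root-system machinery applies uniformly) and several ``small'' or ``quasithin'' configurations --- for groups that ought to be recognized as $\operatorname{PSL}_2$, $\operatorname{PSL}_3$, or low-rank exceptional groups --- where the abundance of tori and root subgroups available in the generic case is missing. These small configurations are expected to absorb the bulk of the technical work and account for most of the pages of the argument.
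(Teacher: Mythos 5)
This statement is not proved in the paper at all: it is quoted as a black box from the monograph of Alt{\i}nel, Borovik and Cherlin \cite{abc}, where its proof occupies the better part of several hundred pages. So the only way your text could count as a proof is if it actually carried out the programme it describes, and it does not: every single step you list --- conjugacy and structure of Sylow $2$-subgroups, definability and boundedness of the ``$2$-unipotent radical'', control of $2$-local subgroups and of cores of involution centralizers, production of standard components, construction of a $BN$-pair and interpretation of the field, and the final recognition theorem --- is itself a major theorem (or a chapter-length chain of theorems) for which you give no argument. As written, this is a research plan, not a proof, and it cannot be checked or repaired locally; the gap is the entire content of the theorem.

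Beyond that, several specifics of the plan do not match how the even type classification is actually carried out, and some would likely fail if pursued literally. The signalizer-functor control of $O(C_G(t))$, decent tori, Carter subgroups and genericity of (semisimple) elements are the tools of the \emph{odd type} half of the theory; the even type argument in \cite{abc} instead runs by induction on $K^*$-groups, eliminates mixed type, analyzes weakly and strongly embedded subgroups and standard components in centralizers of involutions, and identifies the group not by building a $BN$-pair from the normalizer of a Sylow $2$-subgroup but via amalgam-method arguments and a generic identification theorem of Curtis--Tits type. Also note that the statement as quoted only assumes the existence of an infinite elementary abelian $2$-subgroup, so the elimination of mixed type is part of what must be proved --- a step absent from your outline. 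If your intention was merely to indicate that the result follows the broad philosophy of the classification of finite simple groups transported to finite Morley rank, that is fair as a comment, but it is not a proof of Fact~\ref{th:even}.
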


In view of this result and properties of Sylow 2-subgroups in groups of finite Morley rank \cite{BBC-ID}, the remaining configurations in a proof of the Cherlin--Zilber Conjecture involve either groups of \emph{degenerate type}, that is, simple groups without involutions (here a counterexample may emerge) or groups where a Sylow 2-subgroup contains a non-trivial divisible abelian subgroup of finite index (groups of \emph{odd type}).

Since most proofs in the classification project use induction on
Morley rank, it is convenient to say that
a group of finite Morley rank is a \emph{K-group} if all its
simple definable sections (that is, groups of the form $H/K$ for
definable subgroups $K \lhd H \leqslant G$) are algebraic groups. We say
that $G$ is a \emph{$K^*$-group} if proper definable subgroups of $G$
are $K$-groups. Obviously, a minimal counterexample to the
Cherlin-Zilber Conjecture is a $K^*$-group.

For a group $G$ of odd type, the crucial parameter is the
\emph{Pr\"{u}fer $2$-rank}, ${\rm pr}(G)$, that is, the number of
copies of\/ $\mathbb{Z}_{2^\infty}$ in the direct sum decomposition $T=
\mathbb{Z}_{2^\infty}\times \cdots \times \mathbb{Z}_{2^\infty}$ of a maximal
divisible $2$-subgroup $T$ of $G$. The present state of affairs is stated in the following theorem,
which summarises a series of works by Alt\i nel, Berkman, Borovik, Burdges, Cherlin, Deloro, Fr\'{e}con, and
Jaligot \cite{ab,berkman,bbrevisited,Bo-SLF,BBBC,BoBu,BBC-ID,BBN-UC,Bu-SF,Bu-GCS,Bu-Bender,Bu09,Bu-Che08,Bu-Che09,BCJ-MCS,BuDe10,Cherlin05,ChJa-Min,Deloro08,Del09b,DeJa10,DJ16,Frecon09,Frecon16} and reduces the classification of groups of odd type to a number of ``small'' configurations.

\begin{fact}
Let $G$ be a simple group of odd type with the property that, for any definable proper subgroup $H$, if $K\lhd H$ is such that $H/K$ is simple, then the latter is an algebraic group.  Then either $G$ is
algebraic or its  Pr\"{u}fer $2$-rank  is at most $2$.
\end{fact}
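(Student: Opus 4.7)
The plan is to argue by induction on Morley rank, systematically exploiting the $K^*$-style hypothesis to convert local data around involutions into a global identification with an algebraic group as soon as the Pr\"ufer $2$-rank is large enough to carry a rigid root-system structure.

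First, I would invoke the structure theory of Sylow $2$-subgroups in odd type groups of finite Morley rank: the connected component $S^\circ$ is the central product of a $2$-torus $T$ of Pr\"ufer rank $n = {\rm pr}(G)$ with a definable $2$-unipotent part of bounded exponent. Fixing an involution $i \in T$, the centralizer $C = C_G(i)$ is a proper definable subgroup (since $G$ is simple and infinite), so by hypothesis its simple sections are algebraic. Applying the generic $K^*$-analysis of \emph{involution centralizers} developed in the works cited, one controls $C^\circ/O(C^\circ)$ as a central product of algebraic pieces and a small residue. The first serious task is to ensure that enough of this algebraic structure is visible in $G$ itself rather than lost to the solvable radical; this is handled by the standard $0$-unipotence / Borel-type machinery of Burdges and Fr\'econ.

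Second, I would construct for every involution $j \in T$ a definable \emph{root $\rm SL_2$-subgroup} $L_j \leqslant G$ with $j$ in a maximal torus of $L_j$, by means of the signalizer-functor and strongly embedded subgroup techniques refined by Alt\i nel--Borovik--Cherlin and Deloro--Jaligot. When ${\rm pr}(G) \geqslant 3$, the collection $\{L_j : j \in T\setminus\{1\}\}$ is forced to interlock: pairwise commutation relations and rank computations produce a definable analogue of a root system of Lie rank $\geqslant 3$, together with a compatible Weyl group acting on $T$. At this point one is in position to run a Curtis--Tits / amalgam identification inside the finite Morley rank setting, recognizing $G$ as an algebraic group over an algebraically closed field of characteristic $\neq 2$, using the rigidity of spherical buildings of rank $\geqslant 3$.

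The main obstacle, and the reason the bound in the conclusion is exactly $2$, lies in the transition from \emph{local algebraicity} to a \emph{global BN-pair}. When ${\rm pr}(G) = 1$ or $2$ the emerging root datum has Lie rank at most $2$, so no Tits-style rigidity is available and the cases $\rm PSL_2$, $\rm PSL_3$, $\rm PSp_4$, $\rm G_2$ genuinely have to be separated from hypothetical ``bad'' configurations by hand, which is precisely why these ranks are excluded from the statement. For ${\rm pr}(G) \geqslant 3$, the hard technical point is not the identification itself but verifying that the local $\rm SL_2$'s glue definably: one has to control the interaction of $2$-toral and $2$-unipotent parts in centralizers of $2$-tori of corank $1$ in $T$, ruling out ``degenerate'' obstructions via a Thompson rank computation. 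Once this gluing is established, the conclusion follows by applying the Weyl group / root-system recognition theorems already available in the $K^*$-setting.
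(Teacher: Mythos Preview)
The paper does not prove this statement at all: it is recorded as a \emph{Fact}, attributed to a long list of references (Alt\i nel, Berkman, Borovik, Burdges, Cherlin, Deloro, Fr\'econ, Jaligot), and used purely as background to motivate the programme. There is therefore no proof in the paper to compare your proposal against.

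That said, your sketch is broadly in the spirit of how the cited body of work proceeds: one studies centralisers of involutions under the $K^*$-hypothesis, controls cores via signalizer-functor and strong-embedding arguments, extracts root $\mathrm{SL}_2$-subgroups attached to a maximal $2$-torus, and then runs a generic identification / Curtis--Tits style recognition once the Pr\"ufer $2$-rank is at least~$3$, the low-rank cases being left open precisely because no building rigidity is available there. One correction worth flagging: since $G$ is of \emph{odd} type, the connected component $S^\circ$ of a Sylow $2$-subgroup is itself a $2$-torus; there is no nontrivial bounded-exponent factor in $S^\circ$ (a genuine such factor would place $G$ in mixed or even type). Beyond that, your outline is a plausible high-altitude summary, but the actual trichotomy, generation, and uniqueness arguments in the cited papers are substantially more delicate than a paragraph can capture, and the present paper makes no attempt to reproduce them.
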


A  dramatic recent step, resolving a key issue from Cherlin's 1979 paper \cite{Ch-GSR}, was Fr\'econ's 2016 elimination of `bad groups' of Morley rank 3 \cite{Frecon16}. This remarkable result has given new momentum to research around the Cherlin--Zilber Conjecture.

\subsection{Back to permutation groups of finite Morley rank}

In the context of groups of finite Morley rank that are also permutation groups (thus having a definable faithful action), the proof of the following theorem, due to Borovik and Cherlin \cite{borche}, indicates the role of the classification technique: an answer to a basic question about actions of groups of finite Morley rank required  the full strength of the Even Type Theorem (Fact \ref{th:even})  and the full range of techniques developed for the study of groups of odd type, as well as analogues of finite group-theoretic methods (O'Nan--Scott/Aschbacher reductions) from Macpherson and Pillay \cite{Macpherson-Pillay95}.

\begin{fact}{\rm \cite{borche}}\label{Primitive} There exists a function $\rho: \nn \rightarrow
\nn$ with the following property.  If a group $G$ of finite Morley
rank acts on a set\/ $X$ faithfully and definably primitively, then $${\rm rk}(G) \leqslant \rho({\rm
rk}(X)).$$ \end{fact}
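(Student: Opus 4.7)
The plan is to adapt the O'Nan--Scott/Aschbacher strategy from finite permutation group theory to the finite Morley rank setting, building on the framework of Macpherson and Pillay \cite{Macpherson-Pillay95}. First I would analyze the socle of $G$: in a faithful definably primitive action the minimal connected definable normal subgroups are tightly constrained, so one can arrange that there is either a unique such subgroup $N$, or exactly two that centralize each other with each acting regularly on $X$. In either configuration $N$ is either elementary abelian (the \emph{affine case}) or a direct product of isomorphic definably simple groups (the \emph{non-affine case}).

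The argument then splits along this dichotomy. In the affine case, $N$ is a connected abelian group acting regularly on $X$, so $\rk(N)=\rk(X)$. Minimality yields an interpretable field structure making $N$ into a finite-dimensional vector space, and the point stabilizer $G_x$ embeds into $\operatorname{GL}(N)$; hence $\rk(G)=\rk(N)+\rk(G_x)$ is polynomially bounded in $\rk(X)$. In the non-affine case, $N=S_1\times\cdots\times S_k$ with each $S_i$ a definably simple group of finite Morley rank, and the action is of diagonal, product, almost-simple, or twisted-wreath type; both the number of factors $k$ and the permutation action of $G/N$ on them are controlled by $\rk(X)$ (a factor cannot contribute less than rank $1$ to the orbit structure on $X$), so the task reduces to bounding $\rk(S_i)$ in terms of the rank of a primitive $S_i$-set.

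The main obstacle is therefore the simple case: bound $\rk(S)$ for a simple group $S$ of finite Morley rank in terms of the rank of a primitive $S$-set. For $S$ of \emph{even type}, Fact~\ref{th:even} identifies $S$ with a simple algebraic group over an algebraically closed field of characteristic $2$, and the required inequality follows from a standard dimension bound on primitive algebraic actions. For $S$ of \emph{odd type}, one invokes Sylow $2$-theory \cite{BBC-ID} together with Pr\"ufer-rank reduction and generic element arguments to dominate $\rk(S)$ without needing full algebraicity. The residual \emph{degenerate type} case is the hardest, because no classification is available; here one must rule out such $S$ directly by an Aschbacher-style local analysis that would, in the presence of a primitive action of small rank, either force involutions into $S$ (contradiction) or force the action to collapse.

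Assembling these bounds through the O'Nan--Scott reduction produces an explicit, though surely far from sharp, function $\rho$. The technical core of the proof, and its conceptually hardest corner, is the degenerate-type configuration, where one cannot appeal to algebraic-group input and must rely entirely on intrinsic finite Morley rank methods.
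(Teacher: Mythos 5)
Your outline reproduces the right ingredients in broad strokes (the Macpherson--Pillay O'Nan--Scott analogue, the Even Type Theorem \ref{th:even}, odd-type machinery), but two of its load-bearing steps are genuine gaps. First, in the affine case you assert that minimality ``yields an interpretable field structure making $N$ into a finite-dimensional vector space'' with $G_x\leqslant\operatorname{GL}(N)$. That assertion is precisely Problem 1 of the Introduction of this paper: whether the abelian socle of a primitive group of affine type carries a definable vector space structure preserved by the point stabiliser is a major \emph{open} problem (known only in special cases), so your affine bound rests on an unproved statement. Second, and more seriously, your treatment of degenerate type proposes to ``rule out such $S$ directly'' by forcing involutions or collapsing the action. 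Connected degenerate-type groups have no involutions at all by \cite{BBC-ID}, and no argument is known that eliminates simple degenerate-type groups --- this is exactly where a counterexample to the Cherlin--Zilber Conjecture might live. A proof of Fact~\ref{Primitive} cannot afford to eliminate these groups; it must \emph{bound} them without classifying them, and your sketch gives no mechanism for doing so.

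The statement is quoted in this paper from \cite{borche}, and the route taken there (reflected in Fact~\ref{fact:tau}) is different from yours at the decisive point: one first bounds the degree $t$ of generic multiple transitivity by a function $\tau(\operatorname{rk} X)$, and then derives $\rho(n)\leqslant n\tau(n)+{n\choose 2}$ by a stabiliser-chain/rank-counting argument. This pivot is what makes both of your problem cases tractable: in degenerate type the absence of involutions severely limits generic multiple transitivity (so the rank bound follows without eliminating the group), and in the affine case one bounds the rank of the point stabiliser acting on the socle via generic transitivity rather than by first producing a field and embedding it in $\operatorname{GL}(N)$. The Even Type Theorem and the full odd-type apparatus enter in establishing the bound on $\tau$, not in a case-by-case elimination of simple socles. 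As written, your proposal is missing this reduction to generic transitivity and substitutes for it two steps that are, respectively, open and currently out of reach.
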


Here the action is \emph{definably primitive} if there is no  non-trivial definable equivalence relation on $X$ preserved by $G$.
This result, together with a reasonably well-developed classification of simple groups of finite Morley rank, suggests that definable actions of groups of finite Morley rank allow some form of description and classification. Once it is completed, this classification should take further the theorem by Macpherson and Pillay \cite{Macpherson-Pillay95} that gives a general structural description of primitive groups of finite Morley rank similar to the celebrated O'Nan--Scott Theorem about finite primitive groups \cite{Liebeck-Praeger-Saxl88}.

An analogy with finite group theory may be useful. For finite groups, the classification of finite simple groups (CFSG) --- frequently used through one of its numerous corollaries, the classification of finite 2-transitive permutation groups ---  has had a profound impact, for example on combinatorics and representation theory. In model theory, groups of finite Morley rank naturally appear as definable groups of automorphisms
of structures,  or as binding groups, and one expects that strong structural results on simple groups of finite Morley rank will have a similarly strong impact in model theory. For instance, in finite permutation group theory, CFSG is typically applied through the O'Nan--Scott Theorem and results of Aschbacher on (maximal) subgroups of classical groups; by \cite{Macpherson-Pillay95},  analogous results hold for finite Morley rank.

\subsection{Primitive groups of finite Morley rank}

One of the types of primitive groups of finite Morley rank, called \emph{affine type}, is a semidirect product $H = V \rtimes G$ of two connected groups of finite Morley rank, with $V$ being abelian and $G$ acting faithfully, definably, and irreducibly on $V$ by automorphisms.  (In this setting, if $V$ has no \emph{non-trivial} proper definable $G$-invariant subgroups, we say that $G$ acts \emph{irreducibly} on $V$. We will also use a closely related  term: $V$ is $G$-minimal, or   $G$  acts on $V$  \emph{minimally}, if $V$ contains no \emph{infinite} proper definable $G$-invariant subgroup.) Then it is easy to see that $G$ is a maximal definable subgroup of $H$, and the action of $H$ on the factor space $H/G$ is primitive. All known examples come from rational actions of a reductive algebraic group acting on a unipotent algebraic group, both over the same field $K$, and even in this case it is not known whether $V$ has the structure of a $K$-vector space preserved by $G$. This leads to a major problem:

\begin{problem}
Is it true that if $H = V\rtimes G$ is a primitive group of finite Morley rank of affine type, then $V$ has the structure of a vector space over an algebraically closed field $F$, and the action of $G$ preserves this structure?
\end{problem}

In other words, is it true that if a connected group $G$ of finite Morley rank acts definably and irreducibly on a abelian group $V$, then $V$ is a vector space over some algebraically closed field $F$, and $G$ is a subgroup of $\operatorname{GL}(V)$? The answer is positive in a number of special cases \cite{Borovik-Deloro,chedel,Deloro09}, but, rather surprisingly, in general it remains unknown, even in the category of algebraic groups---with the exception of the characterisation of natural modules for Chevalley groups \cite{Deloro13,Deloro17,Smith89,Timmesfeld2001}, and some modules for ${\rm SL}_2$ close to natural modules \cite{Deloro16A,Deloro16B}.

\begin{problem}
Is it true that if $H = V\rtimes G$ is a an algebraic group over an algebraically closed field $F$, and $V$ is abelian and contains no non-trivial proper $G$-invariant closed subgroups,  then $V$ has the structure of a vector space over $F$, and the action of $G$ preserves this structure?
\end{problem}

The proofs in \cite{borche} show the importance of bounds on generically multiply transitive actions. By definition, a group $G$ acts \emph{generically $n$-transitively} on a set $X$ if the induced action of $G$ on $X^n$ has a generic orbit $A$.
In addition, this action is \emph{generically sharply $n$-transitive} if the stabiliser of a point in  $A$ is trivial. We also say that the action is \emph{generically multiply transitive} if $n >1$.

\begin{fact} {\rm \cite[Corollary 2.2 and Proposition 2.3]{borche}} \begin{itemize}
\item[{\rm (a)}] There is a function $\tau:\mathbb{N}\to \mathbb{N}$ such that for any virtually definably primitive permutation action of a group
$G$ of finite Morley rank that is generically
$t$-transitive on a set $X$ of rank $n$,
$$t\le \tau(n).$$
\item[{\rm (b)}]  For every function $\tau$ as in {\rm (a)}, the function $\rho$ in Fact {\rm \ref{Primitive}} can be chosen so that
$$\rho(n)\le n\tau(n)+{n\choose 2}.$$
\end{itemize} \label{fact:tau}
\end{fact}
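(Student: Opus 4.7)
The plan is to derive both parts by dimension counting: (a) as an easy consequence of Fact~\ref{Primitive} combined with orbit-stabiliser on $X^t$, and (b) via a careful analysis of how successive generic stabilisations deplete the rank of $G$. Write $m_k$ for the rank of a maximal $G$-orbit on $X^k$ (with $m_0 = 0$), and define the \emph{marginal ranks} $r_k := m_k - m_{k-1}$, which satisfy $0 \le r_k \le n$; since the primitive action on $X$ is faithful, $m_k = \rk(G)$ for all sufficiently large $k$. Observe that $r_k = n$ precisely when $G$ is generically $k$-transitive on $X$.

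For part~(a), after passing to a finite-index definable subgroup one reduces to the definably primitive case, in which Fact~\ref{Primitive} gives $\rk(G) \le \rho(n)$. Generic $t$-transitivity yields $m_t = tn$, so $tn \le \rk(G) \le \rho(n)$, and one may set $\tau(n) := \lfloor \rho(n)/n \rfloor$.

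For part~(b), let $G$ act definably primitively on $X$ and let $t \le \tau(n)$ be the exact maximal degree of generic transitivity, so $r_1 = \cdots = r_t = n$ and $r_{t+1} \le n-1$. The argument rests on two properties of $(r_k)$. First, the sequence is non-increasing: if $H$ is a generic $(k-1)$-stabiliser with maximal orbit rank $r_k$ on $X$, and $y$ lies in such a maximal orbit, then every $H_y$-orbit on $X$ is contained in an $H$-orbit, hence has rank at most $r_k$. Second, once $r_k$ drops strictly below $n$, the sequence strictly decreases until it reaches zero. Granting this, the post-transitive marginal ranks form a strictly decreasing sequence of non-negative integers bounded by $n-1$, whose total is at most $(n-1)+(n-2)+\cdots+0=\binom{n}{2}$. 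Summing,
\[
\rk(G) \;=\; \sum_{k \ge 1} r_k \;\le\; tn + \binom{n}{2} \;\le\; n\tau(n) + \binom{n}{2}.
\]

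The main obstacle is the strict decrease of marginal rank in the post-transitive regime: if $0 < r_k < n$, then $r_{k+1} < r_k$. I would argue by contradiction. Suppose $r_k = r_{k+1} = r$ with $0 < r < n$, let $H$ be the generic $(k-1)$-stabiliser, and let $O = H \cdot y_k$ be a maximal $H$-orbit of rank $r$. Computing $\rk(H \cdot (y_k, y_{k+1}))$ by stabilising either coordinate first, equality of the two expressions forces $H_{y_k}$ to act generically transitively on a maximal $H$-orbit, so $H$ acts generically $2$-transitively on a definable piece of $X$ of rank $r$. Iterating, a plateau of length $s$ at rank $r$ produces a generically $(s+1)$-transitive action of $H$ on a definable set of rank $r$; bounding $s$ then uses part~(a) applied to this induced action. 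The delicate technical point is verifying that the induced action inherits (virtual) definable primitivity so that part~(a) applies; this is where the structural hypothesis on the original action and a careful use of definability of stabilisers in finite Morley rank do the real work.
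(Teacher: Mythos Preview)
The paper does not prove this statement at all: it is quoted as a \emph{Fact} from \cite{borche} (Corollary~2.2 and Proposition~2.3 there) and no argument is given. So there is nothing to compare your attempt against in this paper; what follows are comments on the soundness of your sketch relative to how the result is actually obtained in \cite{borche}.

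Your derivation of (a) from Fact~\ref{Primitive} is circular with respect to the original source. In \cite{borche} the logical order is the reverse: part~(a) is the substantive statement, established using the classification machinery (this is why the paper stresses that Fact~\ref{Primitive} ``required the full strength of the Even Type Theorem \dots and the full range of techniques developed for the study of groups of odd type''), and Fact~\ref{Primitive} is then \emph{deduced} from (a) via the inequality in (b). So you cannot assume $\rho$ exists in order to manufacture $\tau$; that is precisely what (b) is meant to deliver. Within the present paper, where Fact~\ref{Primitive} is simply quoted, your argument for (a) is formally valid, but it misrepresents where the content lies.

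For (b) your skeleton is essentially the one used in \cite{borche}: one studies the sequence of marginal orbit ranks $r_k$ and shows it is non-increasing and eventually strictly decreasing once it falls below $n$, giving the $\binom{n}{2}$ tail. However, your justification of strict decrease does not close. You argue that a plateau $r_k=r_{k+1}=\cdots$ of length $s$ at value $r<n$ yields a generically $(s+1)$-transitive action on a rank-$r$ piece, and then invoke (a) to bound $s$. But bounding the plateau length is \emph{not} the same as proving $r_{k+1}<r_k$; if you only bound plateaus you get a weaker tail estimate than $\binom{n}{2}$. The actual argument in \cite{borche} proves genuine strict decrease (their Proposition~2.3), and the ``delicate technical point'' you flag---that the induced action on a maximal orbit of the generic stabiliser is again (virtually) definably primitive---is exactly the step that carries the weight and is not something one can wave through. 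So your outline for (b) is on the right track but incomplete at the crucial point.
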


It is worth noting that in the case of algebraic groups, the classification of rational generically multiply transitive actions of simple algebraic groups is known only in characteristic $0$ \cite{popov}, and  the group $E_6$ has a generically $4$-transitive action. So we repeat two question asked in \cite{borche}.

\begin{problem}
\bi
 \item[{\rm (a)}] Extend Popov's work \cite{popov} and find all the generically sharply $n$-transitive actions of algebraic groups over algebraically closed fields for $n \geqslant 2$.
 \item[{\rm (b)}] Extend it further to all characteristics and to the finite Morley rank permutation group category in which the groups are Chevalley groups or products of Chevalley groups and tori.   \ei
\end{problem}

\begin{problem} \label{problem-bound-on-generic-transitivity}
Prove that, when $\tau$ is as in Fact~\ref{fact:tau}, then
\[
\tau(n) \leqslant n+2,\
\]
with equality possible only for the natural action of ${\rm PGL}_{n+1}(F)$ on the projective space $\mathbb{P}F^{n}$ over an algebraically closed field $F$.
\end{problem}

\subsection{The result of this paper}

The result proven in this paper deals with one of the crucial configurations in this line of study of primitive actions of groups of finite Morley rank.

\begin{theorem} \label{th:sharply-n-transitive} Let $G$ and $V$ be groups of \fmrd, $V$ a connected  abelian group of Morley rank $n$ without  involutions.  Assume that $G$ acts on $V$ definably, and the action is generically sharply $m$-transitive for $m \geqslant n$.
Then $m = n$, and there is an \acf $F$ such that $V\cong F^n$ and $G\cong\operatorname{GL}(V)$, and the action is the natural action.
\end{theorem}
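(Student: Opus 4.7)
The plan is to identify an interpretable algebraically closed field $F$ inside the structure $(G,V)$, show that $V$ acquires an $F$-vector space structure of dimension $n$ preserved by the $G$-action, and then match ranks to force $G = \operatorname{GL}_n(F)$ and $m = n$. Throughout I use the standing convention of this paper (see the definition of affine type in the Introduction) that $G$ acts on $V$ by group automorphisms, so $G$ fixes $0 \in V$. From generic sharp $m$-transitivity on $V$ the rank equation $\operatorname{rk}(G) = m \cdot n$ is immediate.

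Granting the $G$-irreducibility of $V$ (no proper non-trivial connected definable $G$-invariant subgroup), the rest follows quickly. Schur's lemma applied to the ring $R := \operatorname{End}_G(V)$ of definable $G$-equivariant endomorphisms of $V$ shows that $R$ is a division ring; by Cherlin's theorem any division ring of finite Morley rank is an algebraically closed commutative field, so $R = F$ is an algebraically closed field. Then $V$ becomes an $F$-vector space of dimension $d := \operatorname{rk}(V)/\operatorname{rk}(F) = n/\operatorname{rk}(F)$, and $G \leq \operatorname{GL}_F(V)$, a group of rank $d^2 \operatorname{rk}(F) = n^2/\operatorname{rk}(F)$. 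The inequality $mn = \operatorname{rk}(G) \leq n^2/\operatorname{rk}(F)$ combined with the hypothesis $m \geq n$ forces $\operatorname{rk}(F) = 1$, $m = n$, and $d = n$; then $G$ and $\operatorname{GL}_n(F)$ are connected groups of equal rank $n^2$ with $G \leq \operatorname{GL}_n(F)$, so $G = \operatorname{GL}_n(F)$. (The hypothesis that $V$ has no involutions yields $\operatorname{char} F \ne 2$.)

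The main obstacle is the $G$-irreducibility claim. Suppose for contradiction that $W < V$ is a proper non-trivial connected definable $G$-invariant subgroup of rank $k$ with $0 < k < n$. Pick a generic $m$-tuple $(v_1, \ldots, v_m) \in V^m$ whose stabilizer in $G$ is trivial, and set
\[
H := \{g \in G : g v_i - v_i \in W \text{ for all } i\},
\]
the stabilizer in $G$ of the projection of the tuple to $(V/W)^m$. A direct rank count gives $\operatorname{rk}(H) = km$. The map $\psi \colon H \to W^m$, $h \mapsto (h v_i - v_i)_{i=1}^m$, is an injective definable $1$-cocycle for the $H$-action on $W^m$ and has cogeneric image. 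To derive a contradiction one exploits this embedding together with the hypothesis $m \geq n > k$: the rich cocycle structure on $H \hookrightarrow W^m$ should conflict with the faithful, sharply $m$-transitive action of $G$ on $V^m$, typically by producing a smaller $G$-invariant subgroup inside $W$ (contradicting the minimality of $W$ or allowing an induction step reducing $k$).
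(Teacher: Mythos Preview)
Your proposal has two genuine gaps.

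\medskip
\textbf{The irreducibility argument is not a proof.} You set up the cocycle $\psi:H\to W^m$, check injectivity and the rank $\rk H = mk$, and then write that this ``should conflict'' with the sharp action and ``typically'' produces a smaller invariant subgroup. Nothing is actually derived. The paper establishes irreducibility by a completely different mechanism: using sharp transitivity on a generic tuple $(v_1,\dots,v_m)$, it realises every sign change $v_i\mapsto -v_i$ and every coordinate permutation by a \emph{unique} element of $G$, thereby embedding the hyperoctahedral group $\Sigma_m=Z_2\wr\mathrm{Sym}_m$ in $G$. The weight decomposition $V=\bigoplus U_i$ under $E=\langle e_1,\dots,e_m\rangle$ then gives $m=n$ immediately (Fact~\ref{fact:2-group}) and forces every connected $\Sigma$-invariant subgroup to be $0$ or $V$ (Proposition~\ref{prop:hyperoctahedral}(e)). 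That is how irreducibility is obtained; no cocycle gymnastics are needed, but something is.

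\medskip
\textbf{The ``Schur $\Rightarrow$ field'' step is where the real problem lies.} You assert that $R=\mathrm{End}_G(V)$ is a division ring and then apply Cherlin's theorem on interpretable division rings. Schur's lemma does make each nonzero definable $G$-endomorphism invertible, but the collection $R$ of all definable $G$-equivariant maps $V\to V$ is \emph{not} known to be uniformly definable: different endomorphisms may require formulas of unbounded complexity, so $R$ need not be interpretable and Cherlin's theorem does not apply. Producing an algebraically closed field $F$ with $V\cong F^d$ and $G\leqslant\mathrm{GL}_d(F)$ from an irreducible action is exactly Problem~1 in the Introduction, explicitly stated there as open. The paper avoids this obstacle: in the torsion-free case it invokes Loveys--Wagner (Fact~\ref{loveyswagner}), which is precisely the theorem that supplies the field under those hypotheses; in the elementary abelian $p$-group case there is no such shortcut, and the paper runs an induction on $n$, analysing $H=C_G(\mathbf v_1)$, showing $K=C_H(e_1)$ acts generically sharply $(n-1)$-transitively on $U=U_2\oplus\cdots\oplus U_n$, obtaining $K\cong\mathrm{GL}_{n-1}(F)$ by induction, and then assembling a torus of Pr\"ufer $2$-rank $n$ to invoke Fact~\ref{bbpseudo:thm1.4}. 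Your rank inequality $mn=\rk G\leqslant n^2/\rk F$ is perfectly correct once the field is in hand, but obtaining the field is the substance of the theorem.
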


A group $V \rtimes G$ satisfying the assumptions of Theorem \ref{th:sharply-n-transitive} appears as a point stabiliser in the canonical action of the projective general linear group ${\rm PGL}_{n+1}(F)$ on the projective space $\mathbb{P}F^{n}$; this configuration is unavoidable in any systematic analysis of definable actions of groups of finite Morley rank---in particular, in solving Problem \ref{problem-bound-on-generic-transitivity} and obtaining better bounds in Fact~\ref{Primitive}.

The proof of Theorem~\ref{th:sharply-n-transitive} appears to be deceptively self-contained; indeed it uses only the general theory of groups of  finite Morley rank, with two notable exceptions: the basis of induction, the case $n\leqslant 3$, is a combination of delicate work by Deloro \cite{Deloro09} and a difficult result by Borovik and Deloro \cite{Borovik-Deloro} that uses almost all existing machinery of the classification of groups of odd type, while the final identification of the group $G$ with $\operatorname{GL}(V)$ uses our previous result \cite{bbpseudo}; the proof of the latter required revisiting the older stages of the classification of groups of finite Morley rank \cite{bbrevisited}.

The case when $V$ contains an involution requires an approach different from that of the present paper. It is easy to show that in this case $V$ is an abelian group of exponent $2$. One reason for the need for a special treatment is the non-existence, in characteristic $2$, of the hyperoctahedral group (see Lemma~\ref{lemma:hyperoctahedralgroup}), which is essential in our arguments. Another is the fact that all simple groups of finite Morley rank and of even type are known to be algebraic groups over algebraically closed fields of characteristic $2$ \cite{abc}. Thanks to this fact and other results from \cite{abc}, the problem reduces to the configuration where $G=O(G)*E(G)$ is the central product of two definable subgroups $O(G)$ and $E(G)$; here, $O(G)$ is  the maximal normal definable subgroup of $G$ without involutions, and $E(G)$ is a central product of simple algebraic groups over algebraically closed fields of characteristic $2$. The proof uses heavily the theory of simple algebraic groups and will be published elsewhere.

\subsection{Some immediate future developments}

Of course, it is desirable to remove from Theorem~\ref{th:sharply-n-transitive} assumption of sharpness of the generically $m$-transitive question in question. This is done by methods different from those of the present paper and will be published elsewhere. Our proof involves solving, for the specific case of the hyperoctahedral group $\Sigma_m = Z_2 \wr {\rm Sym}_m$ (see definitions in Section
 \ref{sec:hyperoctahedral}), the following problem.

\begin{problem}{\rm \cite[Problem 8]{borche}}
 Let $\Sigma$ be a finite group. Find  the minimal rank of a connected solvable group of finite Morley
rank that affords a faithful representation of a finite group $\widehat{\Sigma}$ that
covers $\Sigma$, i.e. maps homomorphically onto $\Sigma$.
\end{problem}

\section{Generalities}
\label{sec:generalities}

\subsection{Preliminaries}

Terminology and notation used in this paper follow \cite{abc, bn}. Throughout this paper, all groups are assumed to be of finite Morley rank, all subgroups and actions definable. For a definable set $X$, its Morley rank is denoted $\rk X$.

The structure of nilpotent groups of \fmr is well-known. Recall that a group $G$ is called \emph{divisible} if for every non-zero integer $n$ and every $g\in G$, $x^n=g$ has a solution in $G$.

\begin{fact}{\rm \cite[Proposition I.5.8]{abc}}\label{nilpotentgroups} If $G$ is a connected nilpotent group, then we can express $G=U* R$ as a central product of two of its definable normal subgroups, where $U$ has bounded exponent, $R$ is divisible, and $U\cap R$ is finite.
\end{fact}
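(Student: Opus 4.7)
The plan is to proceed by induction on the nilpotency class $c$ of $G$, with the abelian case $c = 1$ as the substantive base. For connected abelian $G$ of finite Morley rank, set $R := \bigcap_{n \geqslant 1} nG$: each $nG$ is definable, and the descending chain condition on definable subgroups (an immediate consequence of finite Morley rank) forces this intersection to stabilize at a finite stage, so $R$ is definable; by construction $R$ is divisible and is the largest such definable subgroup of $G$. A further DCC application to the subgroups $n(G/R)$ shows that $G/R$ has bounded exponent. To produce a definable bounded-exponent complement, I would invoke injectivity of the divisible abelian group $R$ in the category of abelian groups to obtain an abstract splitting of the extension $R \hookrightarrow G \twoheadrightarrow G/R$, and then let $U$ be a maximal definable subgroup of $G$ of bounded exponent; such a $U$ exists by DCC, and the abstract splitting shows that $G = UR$ with $U\cap R$ finite.

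\textbf{Inductive step.} Assume the theorem holds in nilpotency class less than $c$ and that $G$ has class exactly $c$. Let $Z := Z^\circ(G)$, the connected center, a definable characteristic abelian subgroup. Apply the base case to write $Z = U_Z * R_Z$; passing to $G/U_Z$ one reduces to the situation where the bounded-exponent part of the center is trivial, so $Z$ is itself divisible. Apply the induction hypothesis to $G/Z$, of class at most $c-1$, to obtain $G/Z = \overline{U} * \overline{R}$ with the stated properties. Take $R$ to be the full preimage of $\overline{R}$ in $G$; this is a central extension of the divisible group $Z$ by the divisible group $\overline{R}$, and so $R$ is itself divisible, since $n$-th roots of any element can be extracted by first lifting through $\overline{R}$ and then correcting the discrepancy inside the central divisible $Z$. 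For $U$, take a definable lift of $\overline{U}$ of bounded exponent, constructed by intersecting the preimage of $\overline{U}$ with a maximal definable subgroup of $G$ of bounded exponent.

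\textbf{Main obstacle.} The crux is verifying simultaneously that $[U,R] = 1$, that $G = UR$, and that $U \cap R$ is finite. Commutativity is the most delicate step and I would prove it by a commutator calculation along the lower central series: for $u \in U$ and $r \in R$ the commutator $[u,r]$ lies in $Z$ by the inductive central-product decomposition of $G/Z$, and the combination of $u$ having bounded order with $r$ being divisible inside the abelian central $Z$ forces $[u,r]=1$, via the observation that the commutator map on a divisible abelian group by a bounded-exponent element must annihilate its image. The identity $G = UR$ follows by projecting to $G/Z$, using induction, and a rank computation. Finally, $U\cap R$ is of bounded exponent and is contained in the divisible nilpotent group $R$ of finite Morley rank, whose torsion is a characteristic abelian subgroup; hence $U\cap R$ is abelian of bounded exponent and of finite Morley rank, so finite (since it decomposes into finitely many $p$-primary components each of finite Morley rank as an $\mathbb{F}_p$-vector space, therefore each finite).
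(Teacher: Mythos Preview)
The paper does not give a proof of this fact; it is quoted verbatim as Proposition~I.5.8 of \cite{abc} and used as a black box, so there is nothing in the paper to compare your argument against. Assessing the proposal on its own merits, there are genuine gaps.

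Two of your steps rest on false general principles. First, you twice assert that ``a maximal definable subgroup of $G$ of bounded exponent exists by DCC''; the descending chain condition yields \emph{minimal} elements in a family, not maximal ones, so this is not a valid inference. (In the abelian base case the honest fix is explicit: if $N$ is the exponent of $G/R$, take $U=\{g\in G:Ng=0\}$, which is definable and contains any abstract complement to $R$, so $G=U+R$.) Second, and more seriously, your finiteness argument for $U\cap R$ claims that an abelian group of bounded exponent and finite Morley rank must be finite; this is simply false---an infinite elementary abelian $p$-group can have Morley rank $1$, and your parenthetical ``each $p$-primary component is an $\mathbb{F}_p$-vector space of finite Morley rank, therefore finite'' is exactly the wrong implication. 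What actually forces $U\cap R$ to be finite is that it lies inside the $N$-torsion of the torsion part of the \emph{divisible} group $R$, and that torsion part is a direct sum of Pr\"ufer groups with only finitely many summands at each prime; none of this is in your argument. Beyond these two errors, the inductive step is under-specified: the reduction ``pass to $G/U_Z$ so that $Z$ becomes divisible'' must eventually be reversed by lifting the decomposition back through $U_Z$, which you do not address, and your construction of the bounded-exponent lift $U$ of $\overline{U}$ (``intersect the preimage with a maximal bounded-exponent subgroup'') neither exists by the reasoning you give nor is shown to surject onto $\overline{U}$. By contrast, your commutator calculation for $[U,R]=1$ is essentially sound once one observes that $[U,R]\leqslant Z(G)$ makes $r\mapsto[u,r]$ a genuine homomorphism from the divisible group $R$ into a group of bounded exponent, hence trivial.
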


Existence of an involutory automorphism may control the structure of the group under certain conditions, as the following fact shows.

\begin{fact} {\rm \cite[Lemma I.10.3]{abc} \cite[Exercises 13, 15, p. 78-79]{bn}} \label{involutoryautomorphisms} Let $G$ be a group and $\varphi$ be a definable automorphism of order $2$ of $G$.
\bi
\item[{\rm (a)}] If $\varphi$ does not fix any non-trivial elements in $G$, then $G$ has no involutions and $\varphi$ is the inversion automorphism on $G$.
\item[{\rm (b)}] If $G$ is connected and $\varphi$ fixes only finitely many elements in $G$, then $\varphi$ is the inversion automorphism on $G$.
\ei
Hence, in both cases, $G$ is abelian.
\end{fact}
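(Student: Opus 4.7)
The core construction is the commutator map $\tau : G \to G$ defined by $\tau(g) = g^{-1}\varphi(g)$. Two basic properties drop out immediately: $\tau(g) = \tau(h)$ iff $hg^{-1}$ is fixed by $\varphi$, so the fibers of $\tau$ are cosets of $C := \{x \in G : \varphi(x) = x\}$; and $\varphi(\tau(g)) = \varphi(g)^{-1} g = \tau(g)^{-1}$, so $\tau(G) \subseteq I := \{y \in G : \varphi(y) = y^{-1}\}$. The whole statement now reduces to showing $I = G$: once $\varphi$ is the inversion map, $(gh)^{-1} = \varphi(gh) = \varphi(g)\varphi(h) = g^{-1}h^{-1}$ forces $G$ abelian, and any involution $i \in G$ would satisfy $\varphi(i) = i^{-1} = i$, contradicting the hypothesis of (a).

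For case (b), $C$ is finite and $G$ is connected, so $\tau$ is finite-to-one, $\rk \tau(G) = \rk G$, and $I \supseteq \tau(G)$ is a generic definable subset of $G$ closed under inversion. The heart of the proof is to show that $G$ is abelian. For this I would use the multiplication map $m : I \times I \to G$, $m(a,b) = ab$. Its generic fibers have rank $\rk G$ (the fiber over $y$ is parameterised by the intersection $I \cap yI$ of two generic translates in the connected group $G$), so $m^{-1}(I)$ has rank $2\rk G$; equivalently, generically $(a,b) \in I \times I$ satisfies $ab \in I$. But $ab \in I$ forces $a^{-1}b^{-1} = \varphi(ab) = (ab)^{-1} = b^{-1}a^{-1}$, i.e.\ $[a,b] = 1$. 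Thus commuting pairs are generic in $I \times I$, hence in $G \times G$, so for generic $g$ the definable subgroup $C_G(g)$ is generic in $G$ and therefore equal to $G$ by connectedness; this puts $Z(G)$ into generic position, giving $Z(G) = G$. With $G$ abelian, $\psi(g) := g\varphi(g)$ is a definable homomorphism from $G$ into $C$ (one checks $\varphi(\psi(g)) = \varphi(g)g = \psi(g)$) with kernel exactly $I$, so $G/I$ embeds in the finite group $C$ and connectedness of $G$ forces $I = G$.

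For case (a), the hypothesis $C = \{1\}$ is stronger and no connectedness is assumed. One applies case (b) to the connected component $G^0$---on which $\varphi$ still fixes only the identity---to conclude that $\varphi$ inverts $G^0$, which is therefore abelian with no involutions. Since $G^0$ is a connected abelian group of finite Morley rank without involutions, the squaring map is an injective definable endomorphism, hence surjective, so $G^0$ is $2$-divisible. The extension to $G$ proceeds via the induced action of $\varphi$ on the finite quotient $\bar G = G/G^0$: a coset fixed by $\bar\varphi$ would, using the $2$-divisibility of $G^0$ to extract a square root, produce a non-trivial fixed point of $\varphi$ in $G$---contradicting $C = \{1\}$. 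A coset-by-coset analysis of the fixed-point-free action of $\bar\varphi$ on $\bar G$ then extends inversion to all of $G$.

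The main obstacle is the abelianness step in (b), bridging the generic relation ``$\varphi$ inverts $g$'' and its global version. The rank argument on the multiplication map $m$ and the resulting genericity of $Z(G)$ are what close this gap; connectedness of $G$ and finiteness of $C$ are both essential, for without connectedness $G/I$ being finite would not force $I = G$, and without the rank-counting on $m^{-1}(I)$ one could not promote generic commutativity in $I$ to global commutativity in $G$.
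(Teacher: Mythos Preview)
The paper does not prove this statement; it is quoted as a fact from \cite{abc} and \cite{bn} with no accompanying argument, so there is nothing to compare against and I simply assess your proof on its own terms.

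Your treatment of (b) is correct and rather clean. The map $\tau(g)=g^{-1}\varphi(g)$ has finite fibres, so $\rk I=\rk G$; your multiplication-map computation---each fibre $m^{-1}(y)$ is in bijection with $I\cap yI$, which is generic in the connected group $G$, hence $m^{-1}(I)$ is generic in $I\times I$, hence commuting pairs are generic in $G\times G$, hence $Z(G)=G$---is a legitimate way to force abelianness, after which the endomorphism $\psi(g)=g\varphi(g)$ into $C$ finishes.

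For (a) your strategy is sound but the closing phrase ``a coset-by-coset analysis \ldots\ then extends inversion to all of $G$'' hides real work and, as written, is a gap. Knowing that $\varphi$ inverts $G^\circ$ and that $\bar\varphi$ inverts the finite quotient $\bar G=G/G^\circ$ does \emph{not} formally give that $\varphi$ inverts $G$: for $g\in G$ you still have $\varphi(g)=g^{-1}a$ with some $a\in G^\circ$ to be eliminated. One completion runs as follows. From $\varphi(a)=\varphi(g)g$ and $a\in G^\circ$ one gets $g^{-1}ag=a^{-1}$. Comparing $\varphi(ghg^{-1})$ computed two ways for arbitrary $h\in G^\circ$ yields $g^2hg^{-2}=h$, so $g^2$ centralises $G^\circ$; then $\varphi(g^2)=(g^{-1}a)^2=a^{-1}g^{-2}a=g^{-2}$, so $g^2\in I$. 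Since $\bar G$ has odd order, every element of $G$ has the form $g^2h$ with $h\in G^\circ$, and now $\varphi(g^2h)=g^{-2}h^{-1}=h^{-1}g^{-2}=(g^2h)^{-1}$. You should also note explicitly the degenerate case $\varphi|_{G^\circ}=\mathrm{id}$, which forces $G^\circ=1$ and reduces to the classical finite Burnside argument. None of this is hard, but it is not a throwaway line either; make it explicit.
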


A divisible abelian group is called a {\em  decent torus}  if it is the definable hull of its torsion part \cite[Definition I.1.10]{abc}.

\begin{fact}\label{rank1groups} If $G$ is a connected group of Morley rank $1$, then it is abelian. Moreover, one of the following holds: $G$ is an elementary abelian $p$-group,  or torsion free and divisible, or a decent torus.
\end{fact}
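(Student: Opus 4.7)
I would separate the statement into two parts: abelianness of $G$ (essentially Reineke's theorem), and the structural trichotomy once abelianness holds.

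\emph{Step 1 (abelianness).} Since $G$ is connected of Morley rank~$1$, every proper definable subgroup has rank~$0$ and is finite (a rank-$1$ proper definable subgroup would have finite index, contradicting connectedness). Suppose for contradiction that $G$ is non-abelian. Then $Z(G)$ is finite, and for $g\notin Z(G)$ the centraliser $C_G(g)$ is finite, so $\rk(g^G)=\rk(G)-\rk(C_G(g))=1$ and $g^G$ is generic in $G$. Since $G$ has Morley degree~$1$, disjoint conjugacy classes cannot all be generic; hence $G\setminus Z(G)=g^G$ is a single conjugacy class. In particular $g^{-1}\in g^G$, so some $h\in G$ inverts $g$ and $h^2\in C_G(g)$ is finite. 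The definable hull $d(\langle g\rangle)$ is abelian, and is either $G$---in which case $h$ acts as inversion on a generating set and Fact~\ref{involutoryautomorphisms} forces $G$ abelian, a contradiction---or finite, in which case every non-central element has the same bounded order, forcing $G$ to have bounded exponent; the resulting connected rank-$1$ group of bounded exponent is then shown to be nilpotent (by the standard $p$-group analysis inside $G$) and hence abelian (a connected nilpotent group of rank~$1$ has infinite centre), again a contradiction.

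\emph{Step 2 (structure).} With $G$ abelian, $T(G)=G[n]$ for some $n$ by the chain condition on definable subgroups, so $T(G)$ is definable, and by rank-$1$ connectedness it is finite or all of $G$. A finite non-trivial $T(G)$ is excluded: by the torsion-free analysis below, $G/T(G)$ is torsion-free divisible, hence injective as an abelian group, so $0\to T(G)\to G\to G/T(G)\to 0$ splits, and the direct summand $D$ complementary to $T(G)$ would be a proper subgroup of finite index, violating connectedness. If $T(G)=0$, multiplication by each prime $p$ is injective, so $pG$ has rank~$1$ and equals $G$ by connectedness, yielding $G$ torsion-free divisible. If $T(G)=G$, the primary decomposition $G=\bigoplus_p G_p$ is into definable subgroups (each $G_p=G[p^N]$ for some $N$), and rank-$1$ connectedness forces $G=G_p$ for a unique prime $p$ (any non-trivial finite summand at another prime would create a proper subgroup of finite index); then $G[p]=G$ (elementary abelian $p$-group) or $G[p]$ is finite, in which case multiplication by $p$ gives $pG=G$, so $G$ is a $p$-divisible $p$-group of Morley rank~$1$---a single copy of the Pr\"ufer group $\mathbb{Z}(p^\infty)$, which is the definable hull of its torsion, hence a decent torus.

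The main obstacle is Step~1: converting the ``single non-central conjugacy class'' configuration into a contradiction is the core of Reineke's theorem and requires delicate handling of the case in which $d(\langle g\rangle)$ is finite. The structural Step~2 is a direct application of connectedness, the DCC on definable subgroups, and standard facts about divisible abelian groups.
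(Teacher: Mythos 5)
Your plan reproves the whole fact from scratch, whereas the paper disposes of it in one line by citing \cite[Corollary 6.6]{bn} (Reineke's theorem together with the structure of abelian groups of finite Morley rank) and then splitting the divisible case; reproving it is legitimate, but as written both of your steps have genuine gaps. In Step 1 the finite-order case is not actually handled: the assertion that a connected rank-$1$ group of bounded exponent ``is then shown to be nilpotent by the standard $p$-group analysis'' is not available --- a group of finite Morley rank of bounded exponent is not known to be locally finite (this is a Burnside-type issue), so the theory of locally finite $p$-groups does not apply, and nilpotence here is essentially the statement you are trying to prove. The classical argument goes differently: first reduce to $Z(G)=1$ (if $[h,G]\leqslant Z(G)$ then $g\mapsto [h,g]$ is a homomorphism into a finite group, so connectedness forces $h\in Z(G)$, hence $G/Z(G)$ is centreless); then all non-trivial elements are conjugate, hence of the same order, which is either infinite (killed by the definable hull, as you do) or a prime $p$; for $p=2$ the group is abelian, and for odd $p$ the element $h$ with $x^h=x^{-1}$ would induce an automorphism of order $2$ on $\langle x\rangle\cong \mathbb{Z}/p\mathbb{Z}$ while $h$ itself has odd order --- a contradiction. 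You flag this as ``the main obstacle'' but do not close it.

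Step 2 fails at its first sentence: groups of finite Morley rank satisfy the \emph{descending} chain condition on definable subgroups, not the ascending one, so $T(G)$ need not equal any $G[n]$ and need not be definable at all. Concretely, the multiplicative group of an algebraically closed field of characteristic $0$ is a connected group of Morley rank $1$ whose torsion subgroup (the roots of unity) is infinite, proper and non-definable; your dichotomy ``$T(G)$ finite or $T(G)=G$'' misses exactly this configuration, which is the decent-torus alternative of the statement (and a rank-$1$ decent torus need not be a $p$-group, let alone a single Pr\"ufer group). A second, smaller, lapse: to exclude finite non-trivial $T(G)$ you invoke connectedness against the abstract complement $D$, but connectedness only forbids \emph{definable} proper subgroups of finite index (an infinite elementary abelian $p$-group of rank $1$ has plenty of non-definable finite-index subgroups); this is repairable, e.g.\ by noting $D=nG$ for $n$ the exponent of $T(G)$, or by observing $pG=G$ by connectedness and producing torsion elements of unbounded order. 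The clean route for Step 2 is the one the paper uses implicitly: by \cite[Corollary 6.6]{bn} the group is an elementary abelian $p$-group or divisible, and in the divisible case the torsion is either trivial (torsion-free divisible) or infinite, in which case its definable hull has rank $1$ and so equals $G$ by connectedness, i.e.\ $G$ is a decent torus.
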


\bpr By \cite[Corollary 6.6]{bn}, $G$ is either an elementary abelian $p$-group or a divisible abelian group; further subdivision of the latter follows immediately from $G$ being of Morley rank $1$.
\epr

\begin{fact}\label{automorphismsofptori} A decent torus  does not admit any non-trivial connected definable automorphism groups.

\end{fact}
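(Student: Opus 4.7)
The strategy is to exploit two features of a decent torus $T$: its torsion subgroup $T_{\mathrm{tors}}$ is dense in the sense that $T$ is its definable hull, and each torsion-bounded piece $T[n] = \{t \in T : nt = 0\}$ is finite. Once these are in place, a connected definable group of automorphisms is forced, by continuity and connectedness, to fix all torsion elements, and hence everything.

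First I would observe that since $T$ has finite Morley rank and is divisible abelian, its structure theory (Fact~\ref{nilpotentgroups} applied in the divisible case) gives a decomposition into finitely many divisible rank-$1$ summands. Since $T$ is the definable hull of $T_{\mathrm{tors}}$, no summand can be torsion-free; hence $T_{\mathrm{tors}} \cong \bigoplus_{i=1}^{r} \mathbb{Z}_{p_i^\infty}$ for some primes $p_i$, and in particular $T[n]$ is finite for every positive integer $n$.

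Next, let $G$ be a connected definable group acting definably on $T$ by automorphisms. Each subgroup $T[n]$ is characteristic, hence $G$-invariant and finite. The orbit map $G \to T[n]$ taking $g$ to $g \cdot t$ for a fixed $t \in T[n]$ is a definable function whose image is finite; since $G$ is connected, this image is a single point, so $G$ fixes $t$. Therefore $G$ fixes $T[n]$ pointwise for every $n$, and consequently fixes $T_{\mathrm{tors}} = \bigcup_n T[n]$ pointwise.

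Finally, for each $g \in G$ the fixed-point set $C_T(g) = \{t \in T : g\cdot t = t\}$ is a definable subgroup of $T$ containing $T_{\mathrm{tors}}$. Since $T$ is the definable hull of $T_{\mathrm{tors}}$, and $C_T(g)$ is a definable subgroup containing $T_{\mathrm{tors}}$, we conclude $C_T(g) = T$; that is, $g$ acts trivially on $T$. As this holds for every $g \in G$, the image of $G$ in $\operatorname{Aut}(T)$ is trivial. There is no real obstacle here; the only point requiring care is the easy remark that a connected definable group acting definably on a finite set acts trivially, which is what converts the local-finiteness of the torsion of $T$ into global triviality via definable hulls.
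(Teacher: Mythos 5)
Your argument is correct and is essentially the paper's own proof: the finite characteristic torsion layers are fixed pointwise by any connected definable group, and then the fixed-point set, being a definable subgroup containing the torsion, contains its definable hull, which is all of $T$ by the definition of a decent torus. The only slip is the intermediate claim that $T_{\mathrm{tors}}\cong\bigoplus_{i=1}^{r}\mathbb{Z}_{p_i^\infty}$ over finitely many primes (false in general, e.g.\ for $F^{*}$ with $F$ algebraically closed of characteristic $0$, where every prime occurs); but all you actually use is that each $T[n]$ is finite, which holds because for each prime $p$ the maximal $p$-torus has finite Pr\"ufer $p$-rank --- exactly the prime-by-prime fact the paper invokes.
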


\begin{proof} Let $T$ be a decent torus and $A$ a connected group acting on $T$. Since $T$ is a divisible abelian group, its torsion part is the direct sum of $p$-tori, that is, finite direct sums of Pr\"{u}fer $p$-groups $\mathbb{Z}_{p^\infty}$ for some primes $p$. Let $P$ be the maximal $p$-torus of $T$, for some prime $p$.  Obviously, $P$ is a characteristic subgroup of $T$ and therefore $A$ leaves $P$ invariant as a set. For each $n\geqslant 1$ set $P_n = \{t\in P: t^{p^n} = 1 \}$. Being characteristic finite subgroups of $P$, all $P_n$'s are centralised by $A$. Since $P=\bigcup P_n$, $A$ also centralises $P$; since the argument works for every prime $p$, $A$ centralises the torsion part of $T$, and hence its definable hull which is equal to $T$ by the definition of a decent torus. Thus, $A=1$. \end{proof}

The following is a corollary of Zilber's Indecomposability Theorem \cite[\S 5.4]{bn}.

\begin{fact} {\rm \cite[Corollary 5.29]{bn}} Let $H$ be a connected subgroup of $G$, and $X$ be a {\rm (}not necessarily definable{\rm )} subset in $G$. Then $[H,X]$ is definable and connected.\label{zilberindthm}
\end{fact}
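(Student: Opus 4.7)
The plan is to derive the claim from Zilber's Indecomposability Theorem \cite[Theorem 5.26]{bn}, which states that the subgroup of $G$ generated by any family of definable indecomposable subsets each containing $1$ is definable and connected, and is realised as a product of finitely many of those sets.

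First, fix $x \in X$ and consider the definable map $\phi_x\colon H \to G$, $\phi_x(h) = [h, x] = h^{-1} x^{-1} h x$. Its image $C_x := [H, x] = \phi_x(H)$ is a definable subset of $G$ containing $1 = \phi_x(1)$. The main step of the proof is to verify that each such $C_x$ is indecomposable in Zilber's sense: if $K \leqslant G$ is any definable subgroup and $C_x$ is covered by finitely many right cosets of $K$, then $C_x$ lies in a single coset of $K$. I would pull the coset partition back along $\phi_x$ to obtain a definable equivalence relation on $H$ with finitely many classes, and then use the connectedness of $H$ together with the commutator identity $[h_1 h_2, x] = [h_1, x]^{h_2}[h_2, x]$ to force that equivalence relation to be trivial.

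Once indecomposability of each $C_x$ is established, Zilber's theorem applies to the family $\{C_x : x \in X\}$: the subgroup $\langle C_x : x \in X \rangle$ is definable and connected, and equals a finite product $C_{x_1}^{\pm 1} \cdots C_{x_k}^{\pm 1}$ for suitable $x_1, \ldots, x_k \in X$. Since $[H,X]$ is by definition the subgroup generated by all commutators $[h,x]$ with $h \in H$ and $x \in X$, i.e.\ by the union $\bigcup_{x\in X} C_x$, this yields that $[H,X]$ is definable and connected.

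The main obstacle is precisely the indecomposability of $C_x$: the pulled-back equivalence relation on $H$ is a priori not the coset relation of a subgroup of $H$, and connectedness of $H$ alone does not trivialise an arbitrary definable finite equivalence relation. One must genuinely exploit the algebraic structure of the commutator map $\phi_x$ (together with Morley degree one of $H$) to reduce to a coset partition and thereby force a single class; this is the content of the proof of Corollary~5.29 in \cite{bn}.
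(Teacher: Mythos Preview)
The paper does not give a proof of this statement: it is recorded as a Fact with a bare citation to \cite[Corollary~5.29]{bn}, so there is no argument in the paper to compare your proposal against.

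That said, your sketch is the standard route to Corollary~5.29 in \cite{bn}: apply Zilber's Indecomposability Theorem to the family $\{C_x : x \in X\}$ with $C_x = [H,x]$, after checking that each $C_x$ is an indecomposable definable set containing $1$. You are right that the only substantive point is the indecomposability of $C_x$, and you are also right that connectedness of $H$ alone (Morley degree~$1$) does not automatically collapse a finite definable partition of $H$ to a single class; one needs the commutator identity $[h_1h_2,x] = [h_1,x]^{h_2}[h_2,x]$ to convert the pulled-back partition into a genuine coset decomposition of $H$ by a definable subgroup of finite index, which then must be all of $H$. Since you explicitly flag this and defer to \cite{bn} for the details, your proposal is an accurate outline of the cited proof rather than a new argument, and nothing is missing relative to what the paper itself provides.
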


\begin{fact} {\rm \cite[Corollary 9.9]{bn}} The derived subgroup of a connected solvable group is nilpotent. \label{derived}
\end{fact}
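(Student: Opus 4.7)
The plan is to argue by induction on the derived length $d$ of the connected solvable group $G$. For $d \leqslant 1$, $G$ is abelian and $G' = 1$ is trivially nilpotent. For the inductive step, assume $d \geqslant 2$ and let $A$ be the connected component of $G^{(d-1)}$; this is a connected abelian normal subgroup of $G$. The quotient $G/A$ is connected solvable of smaller derived length, so by induction $G'A/A \cong (G/A)'$ is nilpotent, i.e.\ $\gamma_k(G') \leqslant A$ for some $k$. It then suffices to show that the descending sequence
\[
A_0 := A, \qquad A_{i+1} := [G', A_i]
\]
reaches $1$ in finitely many steps, since then $\gamma_{k+i}(G') \leqslant A_i$ will terminate and $G'$ will be nilpotent.

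By Zilber's indecomposability theorem (Fact \ref{zilberindthm}) each $A_i$ is a definable connected subgroup of $A$, so the descending sequence of Morley ranks stabilizes at some connected $B \leqslant A$ with $B = [G', B]$. The task reduces to showing $B = 1$. Suppose $B \neq 1$ and pick a $G$-invariant, $G'$-minimal, nontrivial connected subgroup $B_0 \leqslant B$. Being connected abelian, $B_0$ decomposes by Fact \ref{nilpotentgroups} as a central product of a bounded-exponent part and a divisible part, each characteristic and hence $G$-invariant; by $G'$-minimality $B_0$ is homogeneous of one of the types singled out in Fact \ref{rank1groups} (decent torus, elementary abelian $p$-group, or torsion-free divisible).

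The main obstacle is the resulting case analysis. The decent-torus case is immediate: by Fact \ref{automorphismsofptori} the connected group $G'$ centralizes $B_0$, contradicting $B_0 \leqslant [G', B]$. The elementary abelian and torsion-free divisible cases are the hardest, as there is no off-the-shelf rigidity lemma at our disposal for them; one must argue that a faithful connected action of $G'/C_{G'}(B_0)$ on the $G'$-minimal abelian group $B_0$ is incompatible with $[G', B_0] = B_0$ under the solvability assumption on $G$, e.g.\ by exhibiting a $G$-invariant filtration on $B_0$ along which $G'$ acts through a nilpotent quotient (the analogue of Lie--Kolchin triangularization) and iterating. Producing this final descent is where solvability of $G$ and the structure theory of connected abelian groups of finite Morley rank have to be combined to force the contradiction and complete the induction.
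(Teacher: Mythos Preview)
The paper does not prove this statement; it is quoted as a known fact from \cite[Corollary 9.9]{bn} and used as a black box. So there is no ``paper's proof'' to compare against, and the question is simply whether your argument stands on its own.

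It does not. Your reduction is sound up to the point where you produce a connected abelian $G$-invariant subgroup $B$ with $[G',B]=B$ and seek a contradiction. But from there you explicitly abandon the two substantive cases (elementary abelian $p$ and torsion-free divisible), replacing a proof by a description of what one ``must argue'' and an appeal to an unspecified analogue of Lie--Kolchin. That is precisely the content of the theorem: the whole difficulty is that in finite Morley rank there is no triangularization available a priori, and producing it is equivalent to what you are trying to prove.

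The missing ingredient is Zilber's Field Theorem in its solvable-group form: if $G$ is connected solvable and $B_0$ is an infinite $G$-minimal abelian section, then $G/C_G(B_0)$ embeds in the multiplicative group of an interpretable algebraically closed field, hence is abelian, so $G'\leqslant C_G(B_0)$. (The abelian-acting-on-abelian version is Fact~\ref{zilberaction}; the passage to solvable $G$ is the step you are skipping.) Granting this, take a connected $G$-invariant series $0=C_0<C_1<\cdots<C_m=B$ with each $C_{i+1}/C_i$ infinite $G$-minimal; then $[G',C_{i+1}]\leqslant C_i$ for every $i$, whence $[G',B]\leqslant C_{m-1}<B$, contradicting $[G',B]=B$. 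Note also that your choice of $B_0$ as ``$G$-invariant and $G'$-minimal'' is the wrong hypothesis and need not even exist; what is needed is a $G$-minimal subgroup, so that $G$-invariance of commutator subgroups can be exploited. Once you take $B_0$ $G$-minimal and invoke the field theorem, your case split on the structure of $B_0$ becomes unnecessary.
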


Here are two results from the literature that deal with the case $\rk(V)\leqslant 2$ in Theorem \ref{th:sharply-n-transitive}.

An action on a group is called {\em minimal} if the only improper definable subgroups left invariant under this action are the finite subgroups.

\begin{fact} {\rm (Zilber)} {\rm \cite[Theorem 9.5]{bn}} Let $G$ and $V$ be abelian groups. If $G$ acts on $V$ faithfully and minimally, then there exists an \acf $K$ such that the action $G\curvearrowright V$ is equivalent to the action $B\curvearrowright K^+$ for some subgroup $B$ in $K^*$.           \label{zilberaction}
\end{fact}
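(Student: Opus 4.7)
The plan is to interpret in the structure $(V, G, \cdot)$ a definable algebraically closed field $K$ for which $V = K^+$ and $G$ acts on $V$ as multiplication by a subgroup $B \leqslant K^\times$. This is done by extracting a commutative subring $R$ from the action of $G$ on $V$ and showing, via the minimality hypothesis, that $R$ must in fact be a field.

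Since $G$ is abelian, the subring $R$ of $\operatorname{End}(V^+)$ generated by the image of $G$ (together with $\operatorname{id}_V$ and the zero endomorphism) is commutative, with typical element $r = \sum_{i=1}^k n_i g_i$ for $g_i \in G$, $n_i \in \mathbb{Z}$, acting on $V$ by $v \mapsto \sum n_i (g_i \cdot v)$. The first substantive step is to show that $R$ acts definably on $V$. I would fix a non-zero $v_0 \in V$ and apply Zilber's Indecomposability Theorem (Fact~\ref{zilberindthm}) to the definable $G$-invariant set $G \cdot v_0 \subseteq V$: it generates a definable connected $G$-invariant subgroup $W = (G \cdot v_0) + \cdots + (G \cdot v_0)$ of $V$, with the number of summands bounded in terms of $\rk V$. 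Since the action is faithful, $W$ is infinite and hence equal to $V$ by minimality. A uniform version of this argument, working with bounded-length $\mathbb{Z}$-combinations of $G$-translates, yields a definable parametrisation of $R$ as a commutative ring acting definably on $V$.

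Next, I would upgrade $R$ to a field using minimality. For any non-zero $r \in R$, both $\ker r$ and $rV$ are $G$-invariant definable subgroups of $V$, since $r$ commutes with the elements of $G$. By minimality, $\ker r$ is finite, and $rV$ has the same rank as $V$, so $rV$ is infinite. If $r, s \in R \setminus \{0\}$ satisfied $rs = 0$, then $sV$ would be an infinite subset of the finite set $\ker r$, a contradiction. So $R$ is a commutative definable integral domain of finite Morley rank; a standard consequence (an injective definable endomorphism of a finite-rank connected group is surjective) is that $R$ is a field. Macintyre's theorem then upgrades $K := R$ to an algebraically closed field.

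Finally, for $0 \ne v_0 \in V$ fixed, the evaluation map $\varphi : K \to V$, $k \mapsto k \cdot v_0$, is a $G$-equivariant group homomorphism; its kernel is a proper ideal of the field $K$, hence zero, and its image is a non-trivial $G$-invariant definable subgroup of $V$, hence equal to $V$ by minimality. Thus $\varphi$ is a definable group isomorphism $K^+ \to V$ conjugating the $G$-action to multiplication by $B$, the image of $G$ in $K^\times$, which yields the stated conclusion. The main obstacle I foresee is the definability of $R$: obtaining a uniform bound on the word length in expressions for elements of $R$, which is precisely where Zilber's Indecomposability Theorem and the finiteness of Morley rank are indispensable.
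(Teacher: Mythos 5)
This Fact is imported by the paper without proof (it is Zilber's field theorem, cited from Borovik--Nesin, Theorem 9.5), and your argument reconstructs essentially the same classical proof: form the commutative subring $R\leqslant\operatorname{End}(V)$ generated by $G$, use minimality together with the indecomposability theorem to make $R$ definable and an integral domain, upgrade $R$ to an algebraically closed field via surjectivity of multiplication and Macintyre's theorem, and identify $V$ with $R^{+}$ by evaluation at $v_0$. Two routine points should still be made explicit: $v_0$ must be chosen with infinite $G$-orbit (possible since $G$ is infinite and $C_V(G^{\circ})$ is a finite $G$-invariant subgroup by minimality), as faithfulness alone does not make $\langle G\cdot v_0\rangle$ infinite for an arbitrary non-zero $v_0$; and both the uniform (bounded-length) parametrisation of $R$ and the well-definedness of its multiplication rest on the observation that any $r\in R$ is determined by $r(v_0)$, because $r$ commutes with $G$, so $\ker r$ is a $G$-invariant subgroup containing $G\cdot v_0$ and hence all of $V=\langle G\cdot v_0\rangle$ whenever $r(v_0)=0$.
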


\begin{fact} {\rm (Deloro)} {\rm \cite{Deloro09}} \label{deloro} Let\/ $G$ be a connected non-solvable group acting faithfully on a connected abelian group $V$. If $\rk(V)=2$, then there exists an \acf $K$ such that the action $G\curvearrowright V$ is equivalent to $\operatorname{GL}_2(K)\curvearrowright K^2$ or $\operatorname{SL}_2(K)\curvearrowright K^2$.
\end{fact}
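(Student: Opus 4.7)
The plan is to first reduce to a $G$-minimal action on $V$, then to produce an algebraically closed field $K$ making $V$ a two-dimensional $K$-vector space acted on $K$-linearly by $G$, and finally to identify $G$ with $\operatorname{GL}_2(K)$ or $\operatorname{SL}_2(K)$.

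For the minimality reduction, suppose for a contradiction that $W\leqslant V$ is a non-trivial proper connected definable $G$-invariant subgroup, so $\rk W=\rk(V/W)=1$ with both factors connected abelian. Any connected definable group acting faithfully on a connected rank-one abelian group is abelian: this is extracted from Fact~\ref{rank1groups} case by case, with Fact~\ref{automorphismsofptori} handling the decent-torus case and Fact~\ref{zilberaction} embedding the acting group into the multiplicative group of the associated algebraically closed field in the remaining rank-one situations (after a Morley-rank comparison with $W$ forces the acting group to be abelian). Writing $C_1=C_G(W)$ and $C_2=C_G(V/W)$, both $G/C_1$ and $G/C_2$ are abelian, so the derived subgroup $G'$ lies in $C_1\cap C_2$, which embeds into the abelian group $\operatorname{Hom}(V/W,W)$. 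Hence $G''=1$, contradicting non-solvability. So $V$ is $G$-minimal, and because $G$ is connected any finite $G$-invariant subgroup is pointwise fixed, making the action essentially irreducible.

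For the field construction, I would pick a maximal connected definable nilpotent (Cartan) subgroup $T\leqslant G$, proper in $G$ by non-solvability, and decompose $V$ as a $T$-module. Either $V=V_1\oplus V_2$ splits into two rank-one $T$-invariant pieces, or $V$ is itself $T$-minimal. In the split case, Fact~\ref{zilberaction} endows each $V_i$ with the additive-group structure of an algebraically closed field $K_i$ into whose multiplicative group $T$ maps by a character. By $G$-minimality of $V$, some $w\in G$ must interchange $V_1$ and $V_2$ (otherwise $V_1$ is $G$-invariant), and conjugation by $w$ transports one field structure onto the other, forcing $K_1\cong K_2=:K$ and endowing $V$ with the structure of $K^2$. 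In the $T$-minimal subcase, Fact~\ref{zilberaction} gives $V\cong K^+$ for an algebraically closed field $K$ of Morley rank $2$ with $T\hookrightarrow K^\times$; this configuration must be ruled out, since any non-solvable extension of this $T$-action by elements of $G$ would have to act $\mathbb{Z}$-linearly on $K^+$ in a way incompatible with the one-dimensional character structure.

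With $V\cong K^2$, the final step identifies $G$ as a non-solvable connected definable irreducible subgroup of $\operatorname{GL}_2(K)$; standard structural results in rank two force $G$ to contain $\operatorname{SL}_2(K)$, and a rank count---tracking whether the central scalars $K^\times$ lie inside $G$---distinguishes $G\cong\operatorname{SL}_2(K)$ from $G\cong\operatorname{GL}_2(K)$. The principal obstacle is the field construction. In the split case, showing that the two eigenspace field structures are definably isomorphic through the Weyl-type element $w$ requires careful control of how conjugation by $w$ interacts with the $T$-characters on each $V_i$. In the $T$-minimal subcase, ruling out a non-solvable $G$ extending the action of $T$ on $V=K^+$ is the most delicate part, requiring an explicit analysis of the additive endomorphisms of $K$ arising from $G$-conjugation.
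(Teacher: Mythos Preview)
The paper does not prove this statement. It is stated as a \textbf{Fact} with a citation to Deloro's paper \cite{Deloro09} and is used as a black box in the proof of Lemma~\ref{step:bigger-3} to dispose of the base case $n=2$. There is therefore nothing in the paper to compare your proposal against.

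As a separate matter, your sketch has real content but also genuine gaps. The minimality reduction is fine: if $W$ is a proper connected $G$-invariant subgroup then both $G/C_G(W)$ and $G/C_G(V/W)$ act faithfully on rank-one groups and are abelian, so $G''=1$. However, your invocation of Fact~\ref{zilberaction} to see that a connected group acting faithfully on a rank-one abelian group is abelian is circular as written, since that fact already \emph{assumes} the acting group is abelian; you need a direct argument (e.g.\ via Hrushovski's analysis of rank-one actions, or Reineke's theorem plus the structure of connected rank-one groups).

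The serious difficulties lie in the field construction and the final identification, and you acknowledge as much. In the $T$-minimal subcase with $V\cong K^+$ for a rank-$2$ field, you say this ``must be ruled out'' but give no mechanism; this is not an afterthought, it is one of the hard cases. In the split case, producing a Weyl element $w$ that actually interchanges $V_1$ and $V_2$ (rather than merely failing to normalise $V_1$) requires an argument. And the final step, ``standard structural results in rank two force $G$ to contain $\operatorname{SL}_2(K)$'', is essentially the whole theorem: once $V\cong K^2$, showing that a non-solvable connected $G\leqslant\operatorname{GL}_2(K)$ (as an abstract group of automorphisms of $V$, not yet known to be $K$-linear) must be $\operatorname{SL}_2(K)$ or $\operatorname{GL}_2(K)$ is exactly what Deloro's paper establishes, and it is not short.
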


In fact, a theorem of Borovik and Deloro deals with our situation for $\rk(V)=3$, however we will not need this result in our proof. A connected non-solvable group of finite Morley rank is called a \emph{bad group} if all its definable connected solvable subgroups are nilpotent.
\begin{fact} {\rm (Borovik-Deloro)} {\rm \cite{Borovik-Deloro}} \label{bordel} Let\/ $G$ be a connected non-solvable group acting faithfully and minimally on an abelian group $V\!$. If\/ $\rk(V)=3$ and $G$ is not a bad group, then there exists an \acf $K$ such that $V=K^3$ and $G$ is isomorphic to either\/ $\operatorname{PSL_2}(K)\times Z(G)$ or\/ $\operatorname{SL_3}(K)* Z(G)$. The action is the adjoint action in the former case, and the natural action in the latter case.
\end{fact}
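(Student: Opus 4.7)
The plan is to extract an algebraically closed field $K$ from $G$ by exploiting non-badness, then use Deloro's rank-$2$ theorem (Fact~\ref{deloro}) on a well-chosen $2$-dimensional $H$-invariant subgroup $W\leqslant V$ for a suitable definable subgroup $H\leqslant G$, and finally bootstrap the resulting copy of $\operatorname{SL}_2(K)$ or $\operatorname{GL}_2(K)$ to the whole of $G$.

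First, since the action is minimal, every proper definable $G$-invariant subgroup of $V$ is finite, and Fact~\ref{zilberindthm} lets one pass freely to connected hulls of orbits. Because $G$ is non-solvable and not a bad group, it contains a definable connected solvable non-nilpotent subgroup $B$; the standard Zilber--Nesin field extraction, applied via Fact~\ref{zilberaction} to the action of a suitable abelian quotient of $B$ on a $B$-minimal connected subgroup of $V$, interprets an \acf $K$ and realises that subgroup as $K^+$. Fact~\ref{rank1groups} then identifies $\operatorname{char} K$ with the characteristic of any rank-$1$ definable subgroup of $V$, and Fact~\ref{automorphismsofptori} rules out extraneous decent-torus actions elsewhere.

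Second, I would produce a rank-$2$ definable connected $H$-submodule $W\leqslant V$ on which Fact~\ref{deloro} applies. One natural source is a semisimple element $t\in G$ (an involution when available, otherwise a toral element from the torus interpreted with $K$): by Fact~\ref{involutoryautomorphisms} in the involutive case one has $V=C_V(t)\oplus [V,t]$, and adjusting $t$ (or replacing it by a suitable power/product) one arranges $W:=[V,t]^\circ$ or $W:=C_V(t)^\circ$ to have Morley rank~$2$. Applied to the induced action of $H/C_H(W)$ on $W$, Fact~\ref{deloro} yields an $\operatorname{SL}_2(K)$-subsystem of $G$ acting naturally on $W\cong K^2$, already defined over the field found in Step~1. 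Third, I would enlarge this $\operatorname{SL}_2(K)$ to determine the simple component of $G$. Two cases arise. Either the $G$-orbit of $W$ is contained in $W$, in which case a rank count together with minimality of $V$ forces $V$ to be the adjoint module and $G^\circ/Z(G)^\circ\cong\operatorname{PSL}_2(K)$; or else conjugating $\operatorname{SL}_2(K)$ by an element of $G\setminus N_G(\operatorname{SL}_2(K))$ produces a second copy meeting the first in a common maximal torus, so that the two $\operatorname{SL}_2$'s generate a rank-$8$ simple subgroup isomorphic to $\operatorname{PSL}_3(K)$ with $V\cong K^3$ its natural module. Since $Z(G)$ centralises the simple component and acts definably on $V$, Schur-type considerations force it to act by scalars, giving the central product descriptions $\operatorname{PSL}_2(K)\times Z(G)$ or $\operatorname{SL}_3(K)\ast Z(G)$.

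The hard step will be the third one: ensuring that the simple component of $G$ is recognisably algebraic and not an exotic simple group of \fmr of odd or degenerate type acting irreducibly on a rank-$3$ module. This is where, as the authors themselves note, one must draw on the full odd-type machinery (Sylow $2$-theory, Pr\"ufer-rank bounds, genericity and tangent-cone arguments) in order to control the $2$-structure of $G$ from the bare hypothesis that $G$ is not bad, and to rule out bad-group-like configurations appearing \emph{inside} $G$ even after one $\operatorname{SL}_2(K)$ has been produced.
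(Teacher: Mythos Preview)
The paper does not prove this statement at all: it is recorded as a \emph{Fact} quoted from \cite{Borovik-Deloro} and is explicitly flagged as something the authors ``will not need\dots\ in our proof''. There is therefore no proof in the paper to compare your proposal against.

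As to the proposal itself, it is an outline rather than a proof, and you yourself identify the decisive gap: Step~3 is not an argument but a description of what remains to be done. Concretely, Step~2 is also underspecified. You assert that for a suitable semisimple element $t$ one can ``arrange'' $[V,t]^\circ$ or $C_V(t)^\circ$ to have rank~$2$, but nothing in the hypotheses guarantees the existence of an involution, and for a general toral element there is no reason the eigenspace decomposition should have a rank-$2$ piece; in characteristic~$0$ or for a torus of rank~$1$ you may well get three rank-$1$ weight spaces instead. Moreover, even granting a copy of $\operatorname{SL}_2(K)$ inside $G$, the dichotomy in Step~3 (``either the $G$-orbit of $W$ is contained in $W$, or two conjugate $\operatorname{SL}_2$'s generate $\operatorname{PSL}_3$'') is exactly the hard content of \cite{Borovik-Deloro}: one must rule out that the subgroup generated by two such copies is something other than a rank-$8$ simple algebraic group, and this is where the odd-type classification machinery enters in earnest. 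Your sketch correctly locates the difficulty but does not resolve it; a complete proof lives in the cited paper, not here.
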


Here are two results about groups acting on groups.

\begin{fact} {\rm \cite[Proposition I.9.9]{abc}} \label{liftingthecentralisers} Let $H$ be a group of finite Morley rank, $Q \lhd H$ a solvable definable $\pi$-subgroup of bounded exponent and  $t \in H$ a $\pi^\perp$ element.
Then
\[
C_H(t)Q/Q =
C_{H/Q}(t).
\]
\end{fact}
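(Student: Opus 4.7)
The forward inclusion $C_H(t)Q/Q \subseteq C_{H/Q}(t)$ is immediate from the definitions, so all the work lies in the reverse: given $x \in H$ with $[t,x] \in Q$, produce $q \in Q$ with $xq \in C_H(t)$. My plan is to induct on the derived length of $Q$.

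For the base case, suppose $Q$ is abelian. The hypothesis that $Q$ is a $\pi$-group of bounded exponent $n$, together with $t$ being $\pi^\perp$, means the action of $\langle t\rangle$ on $Q$ by conjugation is coprime: $n$ is a unit on any $\langle t\rangle$-module quotient of $Q$ and any torsion $t$ can be assumed to have order invertible on $Q$. A Maschke-type argument then yields the Fitting decomposition $Q = C_Q(t) \oplus [Q,t]$, with the endomorphism $\varphi \colon q \mapsto [t,q]$ bijective on $[Q,t]$. Given $x$ with $[t,x]\in Q$, one first checks $[t,x]\in [Q,t]$: if $m$ is the order of the torsion part of $t$, then expanding the trivial commutator $[t^m,x]$ by the standard identity, and using that all conjugates $[t,x]^{t^i}$ lie in the abelian normal subgroup $Q$, gives $(1+t+t^2+\cdots+t^{m-1})\cdot[t,x]=0$, which places $[t,x]$ in $[Q,t]$. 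Bijectivity of $\varphi$ on $[Q,t]$ supplies $y\in Q$ with $[t,y]=[t,x]$, and then a direct computation, exploiting the identity $[t,xy^{-1}] = [t,y^{-1}]\,[t,x]^{y^{-1}}$ and the fact that $Q$ is abelian and normal (so $[t,x]^{y^{-1}} = [t,x]$ and $[t,y^{-1}]=[t,y]^{-1}$), yields $[t,xy^{-1}]=1$.

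For the inductive step, let $Q' = [Q,Q]$. It is characteristic in $Q$, hence normal in $H$, and remains a solvable $\pi$-subgroup of bounded exponent of strictly smaller derived length. Applying the base case to the triple $(H/Q',\, Q/Q',\, tQ')$ supplies $y \in xQ$ with $[t,y]\in Q'$; applying the inductive hypothesis to $(H, Q', t)$ then supplies $z \in C_H(t)$ with $zQ' = yQ'$, so that $zQ = xQ$, completing the reduction. The principal difficulty is the base case, and specifically the coprime-action input in the finite Morley rank setting: the notion of a $\pi^\perp$ element allows torsion-free components, so one must argue that any such torsion-free part acts trivially on a bounded-exponent abelian group using the standard structural results on decent tori and unipotent subgroups from \cite[\S I.5]{abc} and Fact \ref{automorphismsofptori}, and then reduce to the torsion case where classical Maschke delivers the decomposition and the required invertibility of $\varphi$ on $[Q,t]$.
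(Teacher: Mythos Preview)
The paper does not prove this statement; it is cited as \cite[Proposition~I.9.9]{abc}, so there is no proof here to compare against. Your overall scheme---induction on the derived length of $Q$, with the abelian base case handled via a Fitting-type decomposition $Q = C_Q(t)\oplus[Q,t]$---is the standard route, and the inductive step is correct.

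The base case has a genuine gap. Your argument that $[t,x]\in[Q,t]$ relies on expanding $[t^m,x]=1$, which needs $t^m=1$; but a $\pi^\perp$-element of a group of finite Morley rank need not be torsion. Your proposed patch does not close this: first, the definable hull of $t$ need not split as a product of a torsion and a torsion-free subgroup, so ``the torsion part of $t$'' is ill-defined; second, even granting a factorisation $t = t_0 s$ with $s$ centralising $Q$, you would still need $[s,x]=1$ to pass from $[t,x]$ to $[t_0,x]$, and $x$ is an arbitrary element of $H$, not of $Q$; third, Fact~\ref{automorphismsofptori} concerns connected groups acting \emph{on} decent tori, not torsion-free groups acting on bounded-exponent groups, so it is the wrong tool. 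A correct route, in the spirit of \cite{abc}, works inside the definable solvable group $QD$, where $D$ is the definable hull of $t$: since $t^x = t[t,x]\in QD$, the conjugate $D^x$ is another complement to $Q$ in $QD$, and a Schur--Zassenhaus-type conjugacy of complements (available in \cite[Chapter~I]{abc}) yields $q\in Q$ with $t^{xq^{-1}}\in D$, after which one finishes inside the abelian group $D$.
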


 The abelian group $V$ and its subgroups will be written additively.

\begin{fact} \label{fact:2-action-connected} {\rm \cite[Corollaries I.9.11, I.9.14]{abc}} Let a finite elementary abelian $2$-group $D$ act definably on a connected  abelian group $V$.
Assume that $V$ has no involutions.
Then
\[
V = C_V(D) \oplus [V,D].
\]
In particular, $C_V(D)$ and $[V,D]$ are connected.
\end{fact}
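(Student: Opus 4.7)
My plan is to establish the decomposition via a Maschke-style averaging argument over the $2$-group $D$, after first observing that the hypothesis on involutions forces $V$ to be $2$-divisible as a definable abelian group.

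\emph{Step 1 (2-divisibility).} The doubling map $\mu_2 \colon V \to V$, $v \mapsto 2v$, is a definable endomorphism whose kernel consists of $0$ together with the involutions of $V$, hence is trivial by hypothesis. So $\mu_2$ is injective; its image $2V$ is then a definable connected subgroup of $V$ of the same Morley rank as $V$, forcing $2V = V$ by connectedness of $V$. Therefore $\mu_2$ is a definable bijection with definable inverse $v \mapsto v/2$, and by iteration division by any power of $2$ is a definable operation on $V$.

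\emph{Step 2 (character idempotents).} Since $D$ is a finite elementary abelian $2$-group, its characters are homomorphisms $\chi \colon D \to \{\pm 1\}$. For each such $\chi$ define
\[
e_\chi \colon V \to V, \qquad e_\chi(v) = \frac{1}{|D|}\sum_{d \in D} \chi(d)\,(d \cdot v),
\]
which is a definable endomorphism of $V$, using definability of the action of $D$, the fact that $\pm 1$ scaling is built from the group operation, and Step~1 to divide by $|D|$ (a power of $2$). The usual orthogonality computation gives $e_\chi \circ e_{\chi'} = \delta_{\chi,\chi'}\, e_\chi$ and $\sum_\chi e_\chi = \operatorname{id}_V$, so $V = \bigoplus_\chi V_\chi$ with $V_\chi = e_\chi(V)$; each $d \in D$ acts on $V_\chi$ as multiplication by $\chi(d)$.

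\emph{Step 3 (identification and connectedness).} From the eigenspace description, $C_V(D) = V_{\mathrm{triv}}$. For $[V,D]$, observe that $(d-1)v_\chi = (\chi(d)-1)\,v_\chi$ vanishes on the trivial character, so $[V,D] \subseteq \bigoplus_{\chi \ne \mathrm{triv}} V_\chi$; conversely, given a nontrivial $\chi$ pick $d \in D$ with $\chi(d) = -1$, and then $(d-1)v_\chi = -2v_\chi$ lies in $[V,d] \subseteq [V,D]$, so $2$-divisibility returns $v_\chi$ itself. Hence $V = C_V(D) \oplus [V,D]$. Finally, each $V_\chi$ is the image of the definable endomorphism $e_\chi$ of the connected group $V$ and is therefore connected; so $C_V(D) = V_{\mathrm{triv}}$ and $[V,D]$, being a finite sum of connected subgroups of an abelian group, are both connected.

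The only delicate point is ensuring that the idempotents $e_\chi$ are honest definable endomorphisms with the right arithmetic, which is exactly what Step~1 provides; once that is in hand, the argument reduces to linear algebra in the semisimple $\mathbb{Z}[\tfrac12][D]$-module structure on $V$. The hypothesis ``no involutions'' is used only in Step~1, but it is essential: it is what turns $D$-averaging into a legitimate operation in this category.
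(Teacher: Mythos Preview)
Your proof is correct. The paper itself does not give a proof of this statement: it is recorded as a Fact with a citation to \cite[Corollaries I.9.11, I.9.14]{abc}, and is used freely thereafter. So there is no ``paper's own proof'' to compare against in any detailed sense.

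That said, your argument is essentially the standard one and is almost certainly what underlies the cited corollaries: the absence of involutions makes the doubling map a definable automorphism of $V$, so $V$ becomes a $\mathbb{Z}[\tfrac12]$-module in the definable category, and then Maschke averaging over the finite $2$-group $D$ produces the eigenspace decomposition. One small point worth tightening in Step~3: when you write ``$2$-divisibility returns $v_\chi$ itself,'' the cleanest way to say this is that for $w = \tfrac12 v_\chi \in V$ one has $(1-d)w = v_\chi$, so $v_\chi \in [V,D]$ directly, rather than appealing to $2$-divisibility of the subgroup $[V,D]$ (which you have not established separately). Your connectedness argument at the end is fine: each $V_\chi$ is a definable homomorphic image of the connected group $V$, and a finite sum of connected subgroups of an abelian group is connected.
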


The following three results from our earlier paper \cite{bbpseudo} will be useful in this work as well.

Lemma 7.1 in \cite{bbpseudo}  was stated under stronger assumptions on $V$; however,  the proof used only the fact that $V$ is connected, abelian and has no involutions. So we state Lemma 7.1 in this stronger form:

\begin{fact}\label{fact:2-group} {\rm \cite[Lemma 7.1]{bbpseudo}}
Let $V$ be a connected abelian group and $E$ an elementary abelian $2$-group of order $2^m$ acting definably and faithfully on $V$. Assume $m\geqslant n=\rk(V)$ and $V$ contains no involutions.
Then $m = n$ and $V = V_1 \oplus\cdots\oplus V_n$, where
\bi
\item[{\rm (a)}]
every subgroup $V_i$, $i = 1,\dots,n$, is connected, has Morley rank $1$ and is $E$-invariant.
\ei
Moreover,
\bi
\item[{\rm (b)}] for each $V_i$,  $i = 1,\dots,n$, is a weight space of $E$, that is, there exists a homomorphism $\rho_i:E\to \{\pm 1\}$ such that
     \[
     V_i = \{ v \in V \mid v^e = \rho_i(e)\cdot v \mbox{ for all }   e \in E\}.
     \]
\ei
\end{fact}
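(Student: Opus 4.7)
The plan is to construct the simultaneous weight space decomposition of $V$ with respect to $E$ by iterating Fact \ref{fact:2-action-connected}, and then to extract (a) and (b) by combining faithfulness of the action with a rank count.

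First, for any involution $e \in E$, Fact \ref{fact:2-action-connected} yields the decomposition $V = C_V(e) \oplus [V,e]$ into connected $E$-invariant summands; $E$-invariance is automatic since $E$ is abelian. A direct computation shows that $e$ inverts $[V,e]$: for $w = v^e - v$, one has $w^e = v - v^e = -w$. So $C_V(e)$ and $[V,e]$ are, respectively, the $(+1)$- and $(-1)$-eigenspaces of $e$ on $V$.

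Fixing a generating set $e_1, \dots, e_m$ of $E$, I iterate this decomposition. At each stage the current summand is connected abelian without involutions (inherited from $V$), so Fact \ref{fact:2-action-connected} applies again to the next $e_i$. The result is the simultaneous eigenspace decomposition
\[
V \;=\; \bigoplus_{\chi \in \widehat{E}} V_\chi,
\qquad
V_\chi = \{\, v \in V : v^e = \chi(e)\,v \text{ for all } e \in E\,\},
\]
indexed by the dual group $\widehat{E} = \operatorname{Hom}(E,\{\pm 1\}) \cong \mathbb{F}_2^{m}$, with each $V_\chi$ connected.

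Now the kernel of the action of $E$ on $V$ equals $\bigcap_{\chi:\, V_\chi\neq 0} \ker \chi$, so faithfulness forces the nontrivial characters with $V_\chi \neq 0$ to span $\widehat{E}$ as an $\mathbb{F}_2$-vector space of dimension $m$. Hence at least $m$ distinct nontrivial weights $\chi$ satisfy $V_\chi \neq 0$, each contributing Morley rank $\geqslant 1$. Combined with $m \geqslant n = \rk(V)$ and additivity of rank on direct sums, this forces $m = n$, exactly $n$ nontrivial weights each of rank $1$, and the trivial weight space $V_{\mathbf{1}} = C_V(E)$ to have rank $0$ and therefore to vanish (being connected). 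Parts (a) and (b) both read off from this, with $\rho_i := \chi_i$ being the nontrivial characters. The only mild obstacle is justifying that the iteration is legal at each step, i.e.\ that the current summand is connected abelian without involutions; the first two properties are preserved by Fact \ref{fact:2-action-connected} itself, and the third passes trivially to any subgroup of $V$.
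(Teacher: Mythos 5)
Your argument is correct: iterating Fact~\ref{fact:2-action-connected} over a generating set of $E$ to get the simultaneous $\pm1$-eigenspace decomposition, then combining faithfulness (via duality in $\widehat{E}\cong\mathbb{F}_2^m$) with the rank count $n=\rk V\geqslant \#\{\chi: V_\chi\neq 0\}\geqslant m\geqslant n$ does establish the statement, the only points deserving an explicit line being that the iterated summands are the \emph{full} simultaneous eigenspaces (this uses the absence of involutions) and that a nonzero connected group is infinite, hence of rank at least $1$. There is no in-paper proof to compare against---Fact~\ref{fact:2-group} is imported from \cite[Lemma 7.1]{bbpseudo}---but your route uses only that $V$ is connected, abelian and involution-free, which is precisely what the authors say the original proof relies on, so it is essentially the intended argument.
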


\begin{fact} {\rm \cite[Corollary 1.3]{bbpseudo}}\label{bbpseudo:cor1.3} Let $F$ be an \acfd, and $G$ a group acting faithfully on $F^n$ as a group of automorphisms of the additive group of $F^n$. If $\operatorname{GL}_n(F)\leqslant G$ then $G=\operatorname{GL}_n(F)$.
\end{fact}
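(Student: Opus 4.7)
The plan is to recover $\operatorname{GL}_n(F)$ inside $G$ canonically, as the centraliser of its own centre, and then to close using rigidity of decent tori together with the absence of non-trivial definable automorphisms of an algebraically closed field. Write $V = F^n$ and let $T = Z(\operatorname{GL}_n(F)) \cong F^*$ be the subgroup of scalar transformations $v \mapsto \lambda v$.

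First I would establish $C_G(T) = \operatorname{GL}_n(F)$. An additive automorphism $g$ of $V$ centralises $T$ precisely when $g(\lambda v) = \lambda g(v)$ for every $\lambda \in F^*$ and $v \in V$, equivalently when $g$ is $F$-linear; the $F$-linear additive automorphisms of $V$ form exactly $\operatorname{GL}_n(F)$, and $T \leqslant Z(\operatorname{GL}_n(F))$ gives the reverse inclusion.

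Next, I would observe that $T$ is a decent torus of Morley rank $1$: the multiplicative group of an algebraically closed field is connected, divisible, of rank one, and the roots of unity form a dense torsion subgroup inside it. Fact~\ref{automorphismsofptori} then forces the connected component $N_G(T)^0$ of the normaliser to act trivially on $T$ by conjugation, so $N_G(T)^0 \leqslant C_G(T) = \operatorname{GL}_n(F)$, with equality because $\operatorname{GL}_n(F)$ is already connected and centralises $T$. A short further argument promotes this to $N_G(T) = \operatorname{GL}_n(F)$: any finite-order element of $N_G(T)/C_G(T)$ would induce a non-trivial definable automorphism of the algebraically closed field recoverable from $T$ together with its action on $V$, and no such automorphism exists in a finite Morley rank context.

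The remaining and principal step is to show that $T$ is normal in $G$; equivalently, since $T$ is characteristic in $\operatorname{GL}_n(F)$ as its centre, that $\operatorname{GL}_n(F) \lhd G$. This is the main obstacle. Abstract group theory alone does not suffice, since in $\operatorname{Aut}(V,+)$ the subgroup $\operatorname{GL}_n(F)$ is properly contained in its normaliser via the semilinear maps induced by arbitrary field automorphisms of $F$; the point we need is that such maps cannot be definable in the finite Morley rank structure. I would exploit the sharp transitivity of $\operatorname{GL}_n(F)$ on ordered bases of $V$ to write any $g \in G$ as $h \cdot s$ with $h \in \operatorname{GL}_n(F)$ and $s$ in the stabiliser $S = \operatorname{Stab}_G(e_1,\ldots,e_n)$ of a fixed basis, and then show $S = 1$. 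Any $s \in S$ must, via rigidity of the diagonal torus $D \leqslant \operatorname{GL}_n(F)$ applied to each of its one-dimensional factors, preserve the $D$-eigenline decomposition $V = F e_1 \oplus \cdots \oplus F e_n$; it then fixes each generator $e_i$ and, once shown to commute with the scalar action of $T$ on each $F e_i$, must restrict to the identity on each line and hence be trivial. Showing that $s$ normalises $D$ and ends up centralising $T$ is the hard part, and I expect it to rely on the main structural result of \cite{bbpseudo} on which this corollary is based.
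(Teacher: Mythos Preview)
This statement is not proved in the present paper at all: it is recorded as a \emph{Fact} with the citation \cite[Corollary~1.3]{bbpseudo} and nothing more. There is therefore no in-paper proof against which to compare your proposal; the argument lives entirely in the earlier paper \cite{bbpseudo}, where it is derived as a corollary of that paper's main theorem on pseudoreflection actions.

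On the substance of your sketch: the opening moves are correct. The identification $C_G(T)=\operatorname{GL}_n(F)$ for $T=Z(\operatorname{GL}_n(F))$ is exactly right, and invoking Fact~\ref{automorphismsofptori} to obtain $N_G(T)^\circ\leqslant C_G(T)$, hence $N_G(T)^\circ=\operatorname{GL}_n(F)$, is sound. But you then correctly isolate the real difficulty---showing $T\lhd G$, equivalently that the basis stabiliser $S$ is trivial---and you do not resolve it: your final paragraph explicitly defers the key step (that an element of $S$ normalises $D$ and centralises $T$) to ``the main structural result of \cite{bbpseudo}''. That is not an alternative proof; it is a reduction back to the very source being cited. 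As you yourself observe, purely group-theoretic reasoning cannot close the gap, since semilinear maps show $\operatorname{GL}_n(F)$ is not self-normalising in the abstract additive automorphism group of $F^n$; some genuinely model-theoretic input is required, and your outline does not supply it. A somewhat tighter starting point would be to work with the full diagonal torus $D$ rather than $T$: one checks directly from the weight decomposition that $C_G(D)=D$, whence $N_G(D)^\circ=D$ by Fact~\ref{automorphismsofptori}, making $D$ almost self-normalising; genericity and conjugacy properties of good tori \cite{Cherlin05} then give leverage on $G^\circ$. Even so, completing the argument along these lines is non-trivial, and in \cite{bbpseudo} the result is indeed obtained as a consequence of the main classification theorem rather than by an elementary self-contained computation.
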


A $2$-torus (that is, a divisible abelian 2-group) is a product of copies of Pr\" ufer 2-groups $\mathbb C_{2^\infty}$. The number of copies is called the \emph{Pr\" ufer 2-rank} of the 2-torus. If $G$ is a group of finite Morley rank, then the Pr\" ufer 2-rank of $G$ is defined to be the Pr\" ufer 2-rank of a maximal 2-torus in $G$. Since maximal 2-tori are conjugate in $G$ (and have finite Pr\" ufer 2-ranks), the definition is independent of the choice of the maximal 2-torus.

\begin{fact}{\rm \cite[Theorem 1.4]{bbpseudo}}\label{bbpseudo:thm1.4} Let $G$ be a connected group acting on a connected abelian group $V$ faithfully and irreducibly. If the Pr\"ufer 2-rank of $G$ is equal to the Morley rank of $V$, then $V$ is a vector space over an \acf and the action $G\curvearrowright V$ is equivalent to $\operatorname{GL}(V)\curvearrowright V$.
 \end{fact}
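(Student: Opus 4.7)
The plan is to exploit the action of a maximal 2-torus of $G$, whose Prüfer 2-rank is $n = \rk(V)$. First I would reduce to the case that $V$ has no involutions: the subgroup $\{v \in V : 2v = 0\}$ is definable and $G$-invariant, so by irreducibility it is either trivial or all of $V$. The latter case, where $V$ is an elementary abelian 2-group, is a qualitatively different even-type configuration requiring separate treatment (as indicated in the paper's introduction); assume hereafter that $V$ has no involutions. Pick a maximal 2-torus $T \leqslant G$, and let $E = \Omega_1(T)$ be its subgroup of elements of order dividing $2$, an elementary abelian 2-group of order $2^n$ acting faithfully on $V$ (faithfully, since $G$ does).

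Applying Fact \ref{fact:2-group} with $m = n$ yields a decomposition $V = V_1 \oplus \cdots \oplus V_n$ where each $V_i$ is connected, of Morley rank $1$, $E$-invariant, and a weight space for a distinct character $\rho_i : E \to \{\pm 1\}$. For every subset $I \subseteq \{1,\ldots,n\}$, distinctness of the $\rho_i$ yields an involution $t_I \in E$ whose $-1$-eigenspace on $V$ is precisely $\bigoplus_{i \in I} V_i$, and by Fact \ref{fact:2-action-connected} this equals $[V, t_I]$; so all partial sums of the $V_i$ are canonically definable connected subgroups. Fact \ref{rank1groups} classifies each $V_i$.

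The core step is to install a single algebraically closed field $F$ on $V$ compatibly with the decomposition. Let $H = C_G(T)^\circ$; it stabilises each weight space $V_i$ and acts minimally on it (the only improper invariant definable subgroups of $V_i$ being finite, since $\rk(V_i)=1$), so Fact \ref{zilberaction} equips each $V_i$ with an algebraically closed field $K_i$ such that the $H$-action on $V_i$ is equivalent to that of a subgroup of $K_i^{\times}$ on $K_i^{+}$. The group $W = N_G(T)/C_G(T)$ permutes the characters $\rho_i$ and hence the weight subgroups $V_i$; conjugation by representatives in $N_G(T)$ transports field structures between those $V_i$ lying in a common $W$-orbit. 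Irreducibility of $V$ forces the $K_i$ to coincide with a single $F$ and forces $W$ to act transitively on $\{V_1,\ldots,V_n\}$: otherwise, the sum of the weight spaces in a proper $W$-orbit, after closing under $G$-translates and invoking Fact \ref{zilberindthm}, would give a proper nonzero $G$-invariant definable subgroup of $V$, contradicting irreducibility.

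The last step is to assemble $\operatorname{GL}_n(F)$ inside $G$ as a subgroup of $\operatorname{Aut}(V)$: the transitive $W$-action supplies lifts of the full symmetric group permuting coordinate axes, the torus $H$ supplies the diagonal $(F^{\times})^n$, and elementary transvections between pairs $V_i, V_j$ must be extracted from commutators of well-chosen conjugates, again using Fact \ref{zilberindthm} to recover connected definable subgroups. Once $\operatorname{GL}_n(F) \leqslant G$, Fact \ref{bbpseudo:cor1.3} gives the desired equality $G = \operatorname{GL}_n(F)$ and the action is the natural one. The main obstacle is this final assembly, and particularly the production of transvections: the Prüfer 2-rank hypothesis furnishes the semisimple/diagonal skeleton directly, but the unipotent root subgroups likely demand a delicate use of the rank-1 field structure together with pairwise commutation relations in $E$ to recognise, inside $G$, subgroups acting on pairs $V_i \oplus V_j$ in the shape of $\operatorname{SL}_2(F)$ or at least producing the required transvections between them.
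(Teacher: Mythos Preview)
The paper does not prove this statement; it is quoted as a Fact from \cite{bbpseudo} with no argument given here, so there is no in-paper proof to compare your proposal against. As the title of \cite{bbpseudo} indicates, the argument there is organised around \emph{pseudoreflections}---elements of $G$ centralising a definable corank-$1$ subgroup of $V$. The involutions $e_i$ in a maximal $2$-torus are reflections in this sense, and the substantive work is to enlarge each $\langle e_i\rangle$ to a one-dimensional algebraic torus of pseudoreflections with common axis and centralised hyperplane, and then to identify the group they generate.

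Your outline recovers the correct opening (the weight decomposition of $V$ under $E$) and the correct closing (Fact~\ref{bbpseudo:cor1.3}), but the middle is where the content lies, and there the sketch has real gaps. To invoke Fact~\ref{zilberaction} for $H = C_G(T)^\circ$ acting on $V_i$ you need the acting group to be abelian, which you have not argued; and even granting a field $K_i$, Zilber's theorem only yields a \emph{subgroup} of $K_i^\times$, not the whole multiplicative group, so you do not yet have the full diagonal torus $(F^\times)^n$ inside $G$. Your $W$-transitivity claim is also not justified: the sum of weight spaces over a proper $W$-orbit is visibly $N_G(T)$-invariant, but you need it $G$-invariant to contradict irreducibility, and ``closing under $G$-translates'' may well give back all of $V$. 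Finally, you yourself flag the production of transvections (equivalently, root subgroups) as the main obstacle and provide no mechanism; this is precisely the step that the pseudoreflection framework of \cite{bbpseudo} is built to handle, and without it the argument does not close.
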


 A result of Loveys-Wagner \cite{lw} stated in the below form will be used for the torsion free case of our theorem.

 \begin{fact} {\rm (Loveys-Wagner)} {\rm \cite[Theorem A.20]{bn}}\label{loveyswagner}
 Let $G$ be an infinite group acting on an infinite divisible abelian group $V$. If the action is faithful and $G$-minimal, then there exists an \acf $F$ of characteristic 0 such that $V$ is a vector space over $F$, $G$ is definably isomorphic to a subgroup $H$ of $\operatorname{GL}(V)$, and the action $G\curvearrowright V$ is equivalent to $H\curvearrowright V$.
 \end{fact}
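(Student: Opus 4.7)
The plan is to construct a field $F$ as the centraliser of the action in an endomorphism ring, and then recognise $V$ as a finite-dimensional $F$-vector space on which $G$ acts $F$-linearly.

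First, I would show that $V$ must be torsion-free. Since $V$ is divisible abelian of finite Morley rank, it decomposes as $V = T \oplus W$, with $T$ the maximal decent torus (the definable hull of the torsion subgroup of $V$) and $W$ a torsion-free divisible complement. The subgroup $T$ is characteristic in $V$, hence definable and $G$-invariant, so $G$-minimality forces either $T$ finite or $T = V$. In the second case, Fact~\ref{automorphismsofptori} yields that $G^\circ$ acts trivially on $T = V$, and faithfulness then forces $G$ to be finite, contradicting the hypothesis. Thus $T$ is finite; being divisible it is trivial, so $V$ is torsion-free divisible. This torsion-freeness is also what will account for the eventual field having characteristic zero.

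Next, I would form $F$, the ring of definable $G$-equivariant endomorphisms of $V$. For nonzero $f \in F$, both $\ker f$ and $\operatorname{im} f$ are definable $G$-invariant subgroups of $V$, so $G$-minimality makes $\ker f$ finite and $\operatorname{im} f$ of finite index in $V$. Torsion-freeness gives $\ker f = 0$, while connectedness and divisibility of $V$ give $\operatorname{im} f = V$; hence $f$ is bijective, its inverse is again a definable $G$-equivariant endomorphism, and $F$ is a skew field. By Macintyre's theorem, any infinite skew field of finite Morley rank is a commutative algebraically closed field, so $F$ is algebraically closed; it has characteristic zero because $V$ is torsion-free. Viewing $V$ as an $F$-module, finite Morley rank forces $\dim_F V$ to be finite, giving $V \cong F^n$. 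Since $G$ commutes with $F$ by construction, $G$ acts $F$-linearly, and the resulting definable embedding $G \hookrightarrow \operatorname{GL}(V)$ realises the required equivalence of actions.

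The principal obstacle is ensuring that $F$ is \emph{infinite}, so that Macintyre's theorem can be invoked---\emph{a priori} $F$ could reduce to the scalar action of $\mathbb{Z}$. One produces nontrivial $G$-equivariant definable endomorphisms of $V$ via Zilber's Indecomposability Theorem (Fact~\ref{zilberindthm}) applied to carefully chosen $G$-invariant definable subgroups of $V \oplus V$, whose projections to the summands supply the desired equivariant endomorphisms. This construction, together with the verification that the skew-field structure so obtained is truly centralised by $G$, is the delicate technical core of the Loveys--Wagner argument, and is what distinguishes it from the simpler Zilber field theorem available when $G$ is already known to be abelian.
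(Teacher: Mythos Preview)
The paper does not prove this statement at all: it is recorded as a \emph{Fact}, attributed to Loveys--Wagner and cited from \cite[Theorem A.20]{bn}, with no proof supplied. So there is no ``paper's own proof'' to compare against; the authors simply invoke the result as a black box in the proof of Lemma~\ref{lemma:char=0}.

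As for the proposal on its own merits: the overall shape is the standard one, but there is a genuine gap exactly where you flag the ``principal obstacle''. The object $F$ you introduce---the ring of all definable $G$-equivariant endomorphisms of $V$---is not \emph{a priori} an interpretable ring: each individual such endomorphism is a definable map, but the collection of all of them, together with its ring operations, is not obviously parametrised by a single definable set. Macintyre's theorem applies to interpretable skew fields, so you cannot invoke it until you have shown that some infinite subring of $F$ is uniformly definable. Your last paragraph correctly identifies this as the crux and correctly points toward indecomposability-style arguments on $G$-invariant subgroups of $V\oplus V$, but it stops short of actually executing the construction; saying it is ``the delicate technical core of the Loveys--Wagner argument'' is an accurate diagnosis, not a proof. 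Everything before that paragraph (torsion-freeness of $V$, the skew-field property of $F$ given minimality, characteristic zero, finite dimension) is fine and routine once the interpretability issue is settled.
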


\subsection{Generically multiply transitive actions on sets of Morley degree $\boldsymbol{1}$}\label{subsection:onsets}
A definable subset $Y \subseteq X$ is said to be \emph{generic} in $X$, if $\rk Y = \rk X$.
Assume that a group $G$ is acting on a set $X$ of Morley degree $1$ and that this action is generically sharply $n$-transitive for $n\geqslant 1$, i.e.\ $G$ acts sharply transitively on a generic subset $A$ of $X^n$. Let $\pi_i$ denote the projection from $X^n$ onto the $i$-th component for $i=1,\dots,n$. Then each $\pi_i(A)$ is generic in $X$, and $G$ acts on $\pi_i(A)$ transitively, therefore all $\pi_i(A)$ are equal to the (only) orbit of $G$ generic in $X$. We shall denote $X^*=\pi_i(A)$. Note that $G$ acts transitively and faithfully on $X^*$ and therefore acts faithfully on $X$ by \cite[Lemma 1.6]{borche}.

\begin{proposition} \label{prop:stabiliser-connected}
Under these assumptions, assume that $n>1$. Then
\bi
\item[(a)] Denote the unique generic orbit in $V^m$ by $A$. Then $A$ has Morley degree $1$ and $G$ is connected. Moreover, $\rk G = \rk A =\rk V^m= mn$.
\item[(b)] For all $x\in X^*$, the stabilisers $\stab_G(x)$ are connected.
\ei
\end{proposition}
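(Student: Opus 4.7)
The plan is to use the sharp transitivity of the action to transfer the Morley-rank and Morley-degree data from the generic orbit $A$ back to $G$ and then, after cutting down by a point, to $\stab_G(x)$. Throughout, I will write $k = \rk X$ and use that $X$ has Morley degree $1$, so $X^n$ has Morley degree $1$ and any definable subset of $X^n$ of rank $nk$ is the complement of a set of rank strictly less than $nk$.

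For part (a), I would fix a base point $a \in A$ and consider the definable map $\varphi: G \to A$, $g \mapsto g \cdot a$. Because the action on $A$ is sharply transitive, $\varphi$ is a definable bijection, so $\rk G = \rk A$ and $\ddeg G = \ddeg A$. Since $A$ is generic in $X^n$, we have $\rk A = \rk X^n = nk$ (matching the form $mn$ in the statement when one identifies $X = V$ and $n = m$), and because $X^n$ has Morley degree $1$ and $A$ is generic in it, $A$ differs from $X^n$ by a set of strictly smaller rank; hence $\ddeg A = 1$, so $\ddeg G = 1$, i.e.\ $G$ is connected.

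For part (b), I would fix $x \in X^*$ and analyse the fibre of the projection $\pi_1: A \to X^*$ above $x$. Let $A_x = \{(x_2,\dots,x_n) \in X^{n-1} : (x,x_2,\dots,x_n) \in A\}$. Sharp transitivity of $G$ on $A$ restricts immediately to sharp transitivity of $\stab_G(x)$ on $A_x$: transitivity follows because any two elements of $A$ with first coordinate $x$ are conjugate by some $g \in G$, and such a $g$ must then lie in $\stab_G(x)$; sharpness is inherited from $G \curvearrowright A$. Fixing a base point in $A_x$, the map $\stab_G(x) \to A_x$ is a definable bijection, so $\rk \stab_G(x) = \rk A_x$ and $\ddeg \stab_G(x) = \ddeg A_x$. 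Applying the rank equation to $G \to X^*$, $g \mapsto g\cdot x$, gives $\rk \stab_G(x) = nk - k = (n-1)k$.

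It remains to establish that $\ddeg \stab_G(x) = 1$, and here the key observation is that for generic $x \in X^*$ the fibre $A_x$ has the expected rank $(n-1)k$. Indeed, from part (a) the set $X^n \setminus A$ has rank strictly below $nk$, so the generic fibre of its projection to $X$ has rank strictly below $(n-1)k$; consequently, for generic $x$, the set $A_x \subseteq X^{n-1}$ is the complement in $X^{n-1}$ of a definable subset of rank strictly less than $(n-1)k$. Since $X^{n-1}$ has Morley degree $1$, this forces $\ddeg A_x = 1$ and hence $\ddeg \stab_G(x) = 1$, so $\stab_G(x)$ is connected for generic $x \in X^*$. Since $G$ is transitive on $X^*$, all point stabilisers in $X^*$ are conjugate to this one, and conjugacy preserves connectedness, so $\stab_G(x)$ is connected for every $x \in X^*$. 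The only conceptual subtlety I anticipate is this last passage from ``generic $x$'' to ``every $x \in X^*$'', which is handled cleanly by conjugacy; all the rank and degree manipulations are direct consequences of the bijection supplied by sharp transitivity together with the Morley-degree-$1$ hypothesis on $X$.
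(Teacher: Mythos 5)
Your proof is correct and follows essentially the same route as the paper: part (a) via the definable bijection $G\to A$ together with genericity of $A$ in the degree-one set $X^n$, and part (b) by identifying $\stab_G(x)$ with the fibre $A_x$ on which it acts sharply transitively. The only (harmless) difference is the detour through the complement of $A$ and the conjugacy argument, which is unnecessary: since you have already shown $\rk A_x=(n-1)\rk X$ for \emph{every} $x\in X^*$, the fibre $A_x$ is generic in the degree-one set $X^{n-1}$ and hence has Morley degree one directly, which is exactly how the paper disposes of (b) by viewing $\stab_G(x)$ as acting generically sharply $(n-1)$-transitively on $X$ and reapplying the argument of (a).
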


We shall refer to elements $x\in X^*$ as \emph{generic elements} in $X$.

\begin{proof} By assumption $V$ is connected, hence so is $V^m$.
Observe that $A$ is a generic subset in the set $X^n$ of Morley degree $1$ and therefore also has Morley degree $1$. Since $G$ acts on $A$ sharply transitively, there is a $1$ -- $1$ definable correspondence between  $G$ and $A$ and $G$ also has Morley degree 1. This also shows that $\rk G = \rk A =\rk V^m= mn$ -- this proves (a).

If $n >2$, (b) is an immediate consequence of (a): if $x \in X^*$, its stabiliser $\stab_G(x)$ acts generically sharply $(n-1)$-transitively on $X$. If $n=2$, we apply to the sharp transitive action of $\stab_G(x)$ on $X$ the same argument as in (a).
\end{proof}

\begin{proposition} \label{prop:W-infinite} Let $H$ be a connected group  acting definably on a connected elementary abelian $p$-group $V$ {\rm (}written additively{\rm )}. Assume that $H$ has on $V$ a generic orbit. Assume also that $W= C_V(H)$ is finite and that for a generic $\bar{x} \in \overline{V} = V/W$, the centraliser $C_H(\bar{x})$ is connected.

Then $W = 0$.
\end{proposition}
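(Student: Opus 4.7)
The plan is to assume for contradiction that $W \neq 0$ and then construct a surjective definable homomorphism from the connected group $C_H(\bar v)$ onto the finite group $W$, which is impossible.

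Since $V$ is connected it has Morley degree $1$, so the generic $H$-orbit $A \subseteq V$ is unique: any two disjoint orbits of rank $\rk V$ would contradict degree $1$. Because $H$ centralises $W$ and $V$ is abelian, translation by any $w \in W$ intertwines the $H$-action: $h\cdot(v + w) = h\cdot v + w$. Hence $A + w$ is again an $H$-orbit of the same rank as $A$, so it is generic, and uniqueness gives $A + w = A$ for every $w \in W$.

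Now fix $v \in A$ generic in $V$; since the projection $V \to \overline V$ has finite fibres, $\bar v$ is generic in $\overline V$, and by hypothesis $C := C_H(\bar v)$ is connected. Define
\[
\phi \colon C \to W, \qquad \phi(h) = h\cdot v - v.
\]
The image lies in $W$ precisely because $h \in C$ means $h\cdot \bar v = \bar v$, and a direct calculation using that $W$ is pointwise $H$-fixed,
\[
\phi(h_1 h_2) = h_1\cdot\bigl(v + \phi(h_2)\bigr) - v = h_1\cdot v + \phi(h_2) - v = \phi(h_1) + \phi(h_2),
\]
shows $\phi$ is a definable group homomorphism. The invariance $A = A + w$ from the previous step makes $\phi$ surjective: for any $w \in W$, $v + w \in A + w = A = H\cdot v$, so some $h \in H$ satisfies $h\cdot v = v + w$, and this $h$ automatically lies in $C$. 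Hence $\phi(C)$ is a connected subgroup of the finite group $W$, hence trivial, forcing $W = 0$.

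I do not anticipate any serious obstacle; the one point that requires care is verifying that the unique generic orbit of $H$ is invariant under translations by $W$ and using this to conclude that $\phi$ is genuinely surjective, rather than merely mapping into some proper subgroup of $W$.
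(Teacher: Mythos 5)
Your proof is correct, and its skeleton matches the paper's: both arguments hinge on (i) the fact that, since $V$ is connected of Morley degree $1$, $W$-translates of the unique generic orbit meet it, and (ii) playing the connectedness of $C_H(\bar v)$ against the finiteness of $W$. The differences are in the packaging. Where you observe that $A+w$ is itself an $H$-orbit (because $H$ fixes $W$ pointwise) and hence equals $A$ by uniqueness of the generic orbit, the paper only uses the weaker fact that $O\cap(O+w)$ is generic, to produce some $y$ with $y$ and $y+w$ in the same orbit. And where the paper invokes Zilber's Indecomposability Theorem (its Fact on $[H,X]$ being definable and connected) to show $[C_H(\bar x),x]$ is a connected subgroup of the finite group $W$, hence trivial, so that $C_H(\bar x)=C_H(x)$, you instead note that $\phi(h)=h\cdot v-v$ is a definable homomorphism $C_H(\bar v)\to W$ --- additivity holding precisely because its values lie in the $H$-centralised subgroup $W$ --- whose image is connected, hence trivial; surjectivity of $\phi$ (from $A+w=A$) then gives $W=0$ directly. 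Your route is thus slightly more self-contained, trading the indecomposability theorem for an elementary cocycle-turned-homomorphism computation, while the paper's version needs only the generic intersection rather than exact orbit invariance; both are equally valid, and the one point you flagged (surjectivity onto all of $W$, not a proper subgroup) is indeed handled correctly by the orbit-invariance step.
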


\begin{proof} By Fact~\ref{zilberindthm}, for a generic $\bar{x} \in \overline{V}$, $[C_H(\bar{x}),x]$ is a connected subgroup of the finite group $W$, hence trivial; if follows that $C_H(\bar{x}) = C_H(x)$.

Observe that $O = x^H$ is generic in $V$.  Fix $w\in W \smallsetminus\{0\}$, then $O \cap (O+w)$ is generic in $V$ because of the connectedness of $V$. Hence there exist generic elements $y$ and $z$ in $V$ with $z =  y+w$ and $z=y^h$ for some $h \in H$. But $\overline{y+w} = \overline{y}$, hence $h \in C_H(\overline{y}) = C_H(y)$, and $z=y^h = y$, and $w=0$. A contradiction.
\end{proof}

\subsection{Some actions of hyperoctahedral groups} \label{sec:hyperoctahedral}

We denote by $Z_2$ the cyclic group of order $2$ and define  $\Sigma_m = Z_2 \wr {\rm Sym}_m$, the wreath product of $Z_2$ and the symmetric group ${\rm Sym}_m$. We denote by $E$ the base group of the wreath product: $E = Z_2\times \cdots\times Z_2$, the direct product of $m$ copies of the group $Z_2$; it is an elementary abelian group of order $2^m$ and could be seen as a vector space of dimension $m$ over the field $\mathbb{F}_2$ with two elements. It is easy to see that $E$ contains two sets of $m$ linearly independent elements which are invariant, setwise, under the action  of ${\rm Sym}_m$; we denote  elements in one of these sets $e_1,\dots, e_m$, then the other set is  $e_1e,\dots, e_me$, where $e = e_1e_2\cdots e_m$.

The group $\Sigma_m$ is called the \emph{hyperoctahedral group}; it is the reflection group of type $BC_m$.

\begin{proposition} \label{prop:hyperoctahedral} Assume that the hyperoctahedral group $\Sigma = \Sigma_m$ acts faithfully and definably on a connected abelian group $V$ {\rm (}written additively{\rm )} of Morley rank $n$ with $m \geqslant n$; we assume, in addition, that $V$ has no involutions.

Then $m=n$ and the following statements are true.
\bi
\item[{\rm (a)}] Set $U_i = [V, e_i]$. Then
\[
V = \bigoplus_{i=1}^n U_i,
\]
\noindent
where $U_i$ are connected  and we may assume without loss of generality that $\rk U_i = 1$ for all $i =1,\dots, n$.

\item[{\rm (b)}] $e_i$ inverts every element in $U_i$:
 $u^{e_i} = -u$  for each $u \in U_i$;
\item[{\rm (c)}]  $e_i$ centralises all $U_j$ for $j \ne i$:
 \[
 C_V(e_i) = \bigoplus_{j\ne i} U_j.
 \]
\item[{\rm (d)}] The group $\Sigma$ transitively permutes subspaces $U_i$, $i = 1,2,\dots,n$.

\item[{\rm (e)}] Every connected $\Sigma$-invariant subgroup of $V$ equals $0$ or $V$.

\item[{\rm (f)}] $C_V(E)=0$ and therefore $C_V(\Sigma)=0$.
\ei
\end{proposition}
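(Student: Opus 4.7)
The plan is to apply Fact~\ref{fact:2-group} to the base group $E$ acting on $V$, identify the resulting characters using the Sym$_n$-action, and derive all six parts from this.

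First, the action of $E$ on $V$ is faithful: its kernel is a subgroup of $E$ normal in $\Sigma$ and hence contained in the kernel of the faithful action $\Sigma\curvearrowright V$, so it is trivial. Fact~\ref{fact:2-group} then yields $m=n$ and a decomposition $V=V_1\oplus\cdots\oplus V_n$ into connected rank-$1$ weight spaces with characters $\rho_i\colon E\to\{\pm 1\}$. Faithfulness of $E\curvearrowright V$ reads $\bigcap\ker\rho_i=0$, so the $\rho_i$'s span $\widehat E$; being $n$ in number in the $n$-dimensional $\mathbb F_2$-space $\widehat E$, they form a basis and are thus all distinct and nontrivial. In particular $C_V(E)$ is the trivial weight space $0$, which gives (f).

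Next I would pin down the characters. Sym$_n$ sits inside $\Sigma$ and permutes the set $\{V_1,\dots,V_n\}$ of nonzero weight spaces, so $\{\rho_1,\dots,\rho_n\}$ is a Sym$_n$-invariant subset of $\widehat E$. A Sym$_n$-orbit in $\widehat E$ consists of all characters of a fixed weight $w$ (the number of $e_i$'s on which the character takes value $-1$) and has size $\binom{n}{w}$; requiring $n=\sum_{w\in S}\binom{n}{w}$ with $w\geqslant 1$ in $S$ forces $S=\{1\}$ or $S=\{n-1\}$. In the weight-$(n-1)$ case one computes $\rho_i(e)=(-1)^{n-1}$, and since $e\not\in\bigcap\ker\rho_i=0$ this forces $n$ to be even; then I would replace the Sym$_n$-invariant basis $\{e_1,\dots,e_n\}$ of $E$ by $\{e_1e,\dots,e_ne\}$ (which is a basis precisely when $n$ is even), in which $\rho_j(e_ie)=\rho_j(e_i)\cdot(-1)^{n-1}=-\rho_j(e_i)$, so the $\rho_j$'s acquire weight $1$. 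Thus without loss of generality $\rho_j=\chi_j$, where $\chi_j(e_i)=(-1)^{\delta_{ij}}$.

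With this normalization, $U_i=[V,e_i]$ is the sum of those $V_j$ on which $e_i$ acts as $-1$, which is just $V_i$; hence $U_i=V_i$ is connected of rank $1$ and $V=\bigoplus_i U_i$, giving (a), (b), and the identity $C_V(e_i)=\bigoplus_{j\ne i}U_j$ of (c). Part (d) follows from the transitive action $\sigma\cdot\chi_i=\chi_{\sigma(i)}$ of Sym$_n$. For (e), let $W$ be a connected $\Sigma$-invariant subgroup of $V$; iterated application of Fact~\ref{fact:2-action-connected} to $\langle e_1\rangle,\dots,\langle e_n\rangle$ acting on $W$ (which is connected, abelian, and without involutions) decomposes $W=\bigoplus_i(W\cap V_i)$ into connected summands. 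Each $W\cap V_i$ is a connected definable subgroup of the rank-$1$ group $V_i$, hence equals $0$ or $V_i$, and Sym$_n$-transitivity on $\{V_i\}$ forces these to agree, so $W=0$ or $W=V$.

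The main obstacle is the normalization step in the middle paragraph: one simultaneously has to exclude odd $n$ in the weight-$(n-1)$ case via the faithfulness computation $\rho_i(e)=(-1)^{n-1}$ and perform the basis change $e_i\mapsto e_ie$ for even $n$. All other parts reduce to straightforward bookkeeping via Facts~\ref{fact:2-group} and~\ref{fact:2-action-connected}.
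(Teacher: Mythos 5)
Your proof is correct and takes essentially the same route as the paper: Fact~\ref{fact:2-group} applied to the base group $E$ gives $m=n$ and the rank-one weight-space decomposition from which (a)--(d) and (f) follow, and Fact~\ref{fact:2-action-connected} drives the proof of (e). Your middle paragraph simply makes explicit what the paper's ``without loss of generality'' (prepared by its remark that $e_1,\dots,e_m$ and $e_1e,\dots,e_me$ are the two ${\rm Sym}_m$-invariant generating sets of $E$) leaves implicit --- namely that when the weight characters have weight $n-1$ (forcing $n$ even) one replaces each $e_i$ by $e_ie$ --- and you handle that normalisation correctly.
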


\begin{proof} The first
statement and the clauses (a), (b), (c), (d) easily follow from Fact~\ref{fact:2-group}. To prove (e), we  will use Fact~\ref{fact:2-action-connected}; we shall frequently use it in subsequent text without making specific references to it.

Indeed, let $0 \ne W<V$ be a proper definable connected $\Sigma$-invariant subgroup of $V$. If all $U_i$ intersect with $W$ trivially, the group $E$ centralises $W$ and $V = C_V(E) \oplus [V,E]$ by Fact~\ref{fact:2-action-connected}. Hence $E$ acts faithfully on $[V,E]$ and this contradicts with Fact~\ref{fact:2-group}.

Therefore at least one (and hence all, by part (d)) subgroup $U_i$ intersects $W$ non-trivially. Since $e_i$ inverts every element in $U_i$, we have  $W\cap U_i = [W, e_i]$ hence is connected by Fact \ref{fact:2-action-connected}, and, since $U_i$ is a connected group of Morley rank $1$, $W\cap U_i = U_i$, therefore $U_i \leqslant W$ and $W=V$.

To prove (f), it suffices to consider the natural projection of $C_V(E)$ on the direct summands of $\bigoplus_{i=1}^{n} U_i$.
\end{proof}

\section{Proof of Theorem \ref{th:sharply-n-transitive}}

In this section, we work in the setting of Theorem \ref{th:sharply-n-transitive}.

As it was pointed out in the first paragraph of Subsection~\ref{subsection:onsets}, the
generically sharply $m$-transitive action of $G$ on $V$ implies that $G$ acts on $V$ faithfully.

From now on, we use notation of Section \ref{subsection:onsets}; in particular, denote the unique generic orbit in $V^m$ by $A$.

Observe that $G$ is abelian when $m=1$, and hence we are done by a classical result of Zilber (Fact~\ref{zilberaction}) in this case. From now on, we will assume that $m\geqslant 2$.

\begin{lemma} \label{lemma:hyperoctahedralgroup} If\/ $\overline{a} = (v_1,\dots, v_m)$ is an arbitrary $m$-tuple in $A$, then the setwise stabiliser\/ $\Sigma_{\overline{a}}$ in $G$ of the set\/ $\{ \pm v_1, \dots, \pm v_m\}$ is
\[
\Sigma_{\overline{a}} = \langle e_i,s_\sigma\mid 1\leqslant i\leqslant m{\rm ,\ }\sigma\in {\rm Sym}_m\rangle\leqslant  {\rm Sym}_m\ltimes (\mathbb Z_2)^m,
 \]
 where
 \[
 e_i(v_1,\ldots,v_m)=(v_1\ldots,-v_i,\ldots,v_m)
 \]
 and
 \[
 s_\sigma(v_1,\dots,v_m)=(v_{\sigma(1)},\dots,v_{\sigma(m)}).
 \]
In particular, $G$ contains copies of the hyperoctahedral group  as a subgroup, and, moreover, $m =n$.
\end{lemma}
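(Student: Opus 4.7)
The plan is to exhibit, for a generic tuple $\bar a \in A$, a copy of the hyperoctahedral group $\Sigma_m$ inside $G$, and then to apply Fact~\ref{fact:2-group} to conclude $m = n$. The proof proceeds in three parts.

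\textbf{Existence of the elements.} The hyperoctahedral group $\Sigma_m = \mathrm{Sym}_m \ltimes (\mathbb{Z}_2)^m$ acts definably on $V^m$ by signed permutations of coordinates; each of its $2^m m!$ elements is a rank-preserving definable bijection of $V^m$. Since $V$ is connected, $V^m$ has Morley degree $1$ and rank $mn$, and by Proposition~\ref{prop:stabiliser-connected}(a), the generic orbit $A$ has rank $mn$ and Morley degree $1$, so $V^m \setminus A$ has rank strictly less than $mn$. Therefore, for each $\phi \in \Sigma_m$, the set $\phi^{-1}(A)$ is generic in $V^m$, and since $\Sigma_m$ is finite, the intersection $A_0 := A \cap \bigcap_{\phi \in \Sigma_m} \phi^{-1}(A)$ is still generic in $A$. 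For any $\bar a \in A_0$, every image $\phi(\bar a)$ lies in $A$, and by generically sharp $m$-transitivity there is a unique $g_\phi \in G$ with $g_\phi(\bar a) = \phi(\bar a)$; these elements furnish the required $e_i$ and $s_\sigma$.

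\textbf{Structure of $\Sigma_{\bar a}$.} I would identify $\Sigma_{\bar a}$ with a copy of $\Sigma_m$. The setwise stabilizer $\Sigma_{\bar a}$ is a finite subgroup of $G$, since by sharp transitivity on $A$ it acts faithfully on the finite $2m$-element set $\{\pm v_1, \ldots, \pm v_m\}$. The $g_\phi$'s above yield an injection $\Sigma_m \hookrightarrow \Sigma_{\bar a}$; conversely, a rank argument---showing that tuples in $\{\pm v_1, \ldots, \pm v_m\}^m$ that are not signed permutations of $\bar a$ (e.g.\ containing both $v_i$ and $-v_i$, or omitting some $v_j$) lie in proper subvarieties of $V^m$ of rank less than $mn$ and hence for generic $\bar a$ cannot belong to $A$---implies that any $g \in \Sigma_{\bar a}$ must send $\bar a$ to a genuine signed permutation of itself. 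This yields the reverse inclusion, so $\Sigma_{\bar a} \cong \Sigma_m$ is generated by the $e_i$ and $s_\sigma$ satisfying the hyperoctahedral relations $e_i^2=1$, $[e_i,e_j]=1$, $s_\sigma s_\tau = s_{\sigma \tau}$, and $s_\sigma e_i s_\sigma^{-1} = e_{\sigma(i)}$.

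\textbf{Conclusion $m = n$.} The subgroup $E := \langle e_1, \ldots, e_m \rangle \leq \Sigma_{\bar a} \leq G$, being the base of the wreath product, is elementary abelian of order $2^m$. It acts definably and faithfully on $V$ (faithfulness inherited from $G$), with $m \geq n = \mathrm{rk}(V)$ and $V$ connected abelian without involutions. Fact~\ref{fact:2-group} then immediately forces $m = n$.

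The main obstacle is the rank argument in the second step: one must rigorously rule out that for generic $\bar a$ any element $g \in \Sigma_{\bar a}$ can send $\bar a$ outside the signed-permutation orbit $\Sigma_m \cdot \bar a$. This requires a careful dimension analysis on the slices of $A$, exploiting both the sharpness hypothesis and the fact that $V$ has no involutions (so each pair $\{v_i,-v_i\}$ has two distinct elements). Once this dimension bookkeeping is carried out, the identification $\Sigma_{\bar a} \cong \Sigma_m$ and the conclusion $m=n$ via Fact~\ref{fact:2-group} follow cleanly.
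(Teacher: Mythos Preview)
Your outline is broadly sound and the final step via Fact~\ref{fact:2-group} matches the paper, but two points deserve correction.

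\textbf{The lemma is stated for \emph{every} $\bar a\in A$, not merely for generic $\bar a$.} Your finite-intersection argument only produces a generic $A_0\subseteq A$ on which all signed permutations land back in $A$. The paper obtains the full statement with a sharper observation you omit: each signed coordinate permutation $\rho:V^m\to V^m$ \emph{commutes with the diagonal action of $G$}, because $G$ acts on $V$ by group automorphisms (so $g(-v)=-g(v)$ and coordinates can be permuted). Hence $A\cap\rho(A)$ is $G$-stable; since $G$ is transitive on $A$ this forces $A\cap\rho(A)\in\{\varnothing,A\}$, and connectedness of $V^m$ rules out $\varnothing$. Thus $\rho(A)=A$ exactly, and the elements $e_i,s_\sigma$ exist for every $\bar a\in A$.

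\textbf{The reverse inclusion needs no rank argument.} You flag as the ``main obstacle'' a dimension count to show that any $g\in\Sigma_{\bar a}$ sends $\bar a$ to a genuine signed permutation of itself. This is unnecessary: $g$ is an automorphism of $V$, hence a bijection of the finite set $\{\pm v_1,\ldots,\pm v_m\}$ satisfying $g(-v_i)=-g(v_i)$; it therefore permutes the antipodal pairs $\{v_i,-v_i\}$, which is precisely what it means to act as a signed permutation. Sharpness then forces $g=g_\phi$ for the corresponding $\phi\in\Sigma_m$. (Your proposed rank bookkeeping is also suspect as written: for a non-injective coordinate map the image has rank $\le (m-1)n$ but the fibres have rank $\ge n$, so the preimage of $A$ need not have rank strictly below $mn$.)

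With these two fixes, the identification $\Sigma_{\bar a}\cong\Sigma_m$ and the conclusion $m=n$ follow exactly as in the paper.
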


\begin{proof} Clearly the $e_i$'s and elements of ${\rm Sym}_m$ stabilize $\{ \pm v_1, \dots, \pm v_m\}$ setwise. We need to show that they lie in $G$. First we need an observation.

{\em Claim.} If $\rho: V^m\to V^m$ is a definable bijection that commutes with the induced action of $G$ on $V^m$,   then $\rho$ fixes $A$ setwise.

{\em Proof.} If such a $\rho$ exists, then $A\cap\rho A$ is fixed by $G$ setwise. Since $G$ acts transitively on $A$, the intersection $A\cap\rho A$ is either empty or equal to $A$. Since $V$ is connected, the intersection cannot be empty, hence $A\subseteq \rho A$. Now repeat the same thing with $\rho^{-1}$ to get $A=\rho A$. \hfill $\diamond$

Therefore, the following maps fix $A$ setwise:
for every $1\leqslant i\leqslant m$, $r_i:V^m\to V^m$, where
$r_i(v_1,\dots,v_m)=(v_1\dots,-v_i,\dots,v_m)$;
     and for every $\sigma\in{\rm Sym}_m$, $s_\sigma:V^m\to V^m$, where
    $s_\sigma(v_1,\dots,v_m)=(v_{\sigma(1)},\dots,v_{\sigma(m)})$.

Hence, for every $m$-tuple $\overline{a} = (v_1,\dots,v_m)\in A$ and for every $\sigma\in {\rm Sym}_m$, the $m$-tuples   $(\pm v_1,\ldots,\pm v_m)$,  $(v_{\sigma(1)},\dots,v_{\sigma(m)})$ lie in $A$.

Now, by the sharp transitivity of the action of $G$ on $A$ and we obtain the result. To be more precise,
for every $\overline{a} = (v_1,\dots,v_m)\in A$ and every $1\leqslant i\leqslant m$, there exists a unique involution $e_i\in G$ such that $e_i(v_1,\ldots,v_m)=(v_1\ldots,-v_i,\ldots,v_m)$, and also, for every permutation $\sigma\in{\rm Sym}_m$, there exists a unique $s_\sigma\in G$ such that $s_\sigma(v_1,\dots,v_m)=(v_{\sigma(1)},\dots,v_{\sigma(m)})$.

The isomorphism $\Sigma_{\overline{a}} \cong {\rm Sym}_m\ltimes (\mathbb Z_2)^m$ follows from the sharpness of the action, and the equality $m=n$ follows from Proposition \ref{prop:hyperoctahedral}.
\end{proof}

So, from now on $n=m\geqslant 2$. Now we fix one particular $n$-tuple $\overline{\vr{v}} = (\vr{v}_1,\dots, \vr{v}_n) \in A$ and denote $\vr{v}= \vr{v}_1$. The involutions $e_i \in \Sigma_{\overline{\vr{v}}}$ are defined as in Lemma~\ref{lemma:hyperoctahedralgroup}.  We will be using Proposition \ref{prop:hyperoctahedral} which we reproduce here but will be using without specific references to it.

\begin{lemma} \label{lemma:weight-decomposition}
If $U_i = [V, e_i]$ then
\[
V = \bigoplus_{i=1}^n U_i,
\]
where $U_i$ are connected  and $\rk U_i = 1$ for all $i =1,\dots, n$. Observe also that, for each $i$
\bi
\item $\vr{v}_i \in U_i$;
\item $e_i$ inverts every element in $U_i$:  $u^{e_i} = -u$  for each $u \in U_i$;
\item  $e_i$ centralises all $U_j$ for $j \ne i$:
 \[
 C_V(e_i) = \bigoplus_{j\ne i} U_j.
 \]
 \item The group $\Sigma=\Sigma_{\overline{\vr{v}}}$ transitively permutes subspaces $U_i$, $i = 1,2,\dots,n$.
\ei
\end{lemma}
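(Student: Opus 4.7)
The plan is to derive this lemma almost entirely as a specialisation of Proposition~\ref{prop:hyperoctahedral}. By Lemma~\ref{lemma:hyperoctahedralgroup}, the setwise stabiliser $\Sigma = \Sigma_{\overline{\vr{v}}} \leqslant G$ is isomorphic to the hyperoctahedral group $\Sigma_n$, and it acts definably on $V$; the action is faithful since $G$ itself acts faithfully on $V$. Since $V$ is connected abelian of Morley rank $n$ and contains no involutions, the hypotheses of Proposition~\ref{prop:hyperoctahedral} are met with $m=n$. The decomposition $V = \bigoplus_{i=1}^n U_i$ with $U_i = [V,e_i]$ connected of rank $1$, the fact that $e_i$ inverts $U_i$ and centralises every other $U_j$, and the transitivity of $\Sigma$ on the set $\{U_1,\dots,U_n\}$ then transfer verbatim from clauses (a)--(d) of that proposition.

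The only statement in the lemma that is not already contained in Proposition~\ref{prop:hyperoctahedral} is the assertion $\vr{v}_i \in U_i$, and this I would verify by a short direct calculation. Decompose $\vr{v}_i = u_1 + \cdots + u_n$ with $u_j \in U_j$ according to the direct sum. By the construction of $e_i$ in Lemma~\ref{lemma:hyperoctahedralgroup}, we have $e_i \cdot \vr{v}_i = -\vr{v}_i$, while by the inversion/centralisation behaviour of $e_i$ on the summands, $e_i \cdot \vr{v}_i = -u_i + \sum_{j\ne i} u_j$. Comparing the two expressions gives $2\sum_{j \ne i} u_j = 0$; since $V$ has no involutions, $\sum_{j \ne i} u_j = 0$, and the directness of the sum forces $u_j = 0$ for every $j \ne i$, so $\vr{v}_i = u_i \in U_i$.

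I do not anticipate any substantive obstacle: the heavy lifting has already been done in Fact~\ref{fact:2-group} and in the proof of Proposition~\ref{prop:hyperoctahedral}, and the new ingredient is only the eigenvector identification $\vr{v}_i \in U_i$, which rests solely on the no-involutions hypothesis on $V$.
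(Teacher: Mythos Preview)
Your proposal is correct and matches the paper's approach exactly: the paper does not even give a separate proof of this lemma, introducing it with the remark that it reproduces Proposition~\ref{prop:hyperoctahedral}, so all clauses except $\vr{v}_i\in U_i$ are taken over verbatim. Your explicit verification of $\vr{v}_i\in U_i$ via the decomposition $\vr{v}_i=\sum_j u_j$ and the no-involutions hypothesis is correct and makes precise what the paper leaves as an observation.
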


Also, in view of Proposition \ref{prop:hyperoctahedral}, we have the following fundamental observation:

\begin{lemma}\label{lemma:irreducible}
The action of $G$ on $V$ is irreducible, that is, the only definable $G$-invariant subgroups of $V$ are $0$ and $V$.
\end{lemma}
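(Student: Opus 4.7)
My plan is to deduce the irreducibility statement from two properties of the hyperoctahedral subgroup $\Sigma = \Sigma_{\overline{\vr{v}}}$ of $G$ already supplied by Proposition~\ref{prop:hyperoctahedral}: part~(e) (the only connected $\Sigma$-invariant subgroups of $V$ are $0$ and $V$), and part~(f) ($C_V(E) = 0$, where $E \leqslant \Sigma$ is the elementary abelian $2$-subgroup). All the delicate weight-space analysis has been absorbed into that proposition, so the lemma should follow essentially as a corollary.

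Let $W$ be a definable $G$-invariant subgroup of $V$. Since $\Sigma \leqslant G$, both $W$ and its definable connected component $W^\circ$ are $\Sigma$-invariant. By Proposition~\ref{prop:hyperoctahedral}(e), $W^\circ \in \{0, V\}$. If $W^\circ = V$ then $W = V$ and we are done, so it remains to treat the case $W^\circ = 0$, in which $W$ is finite.

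In that case, since $G$ is connected by Proposition~\ref{prop:stabiliser-connected}(a) and acts on the finite set $W$, it must fix $W$ pointwise, so $W \leqslant C_V(G)$. The inclusions $E \leqslant \Sigma \leqslant G$ give $C_V(G) \leqslant C_V(E)$, and by Proposition~\ref{prop:hyperoctahedral}(f) the right-hand side is $0$. Hence $W = 0$. I expect no real obstacle here: connected invariant subgroups are already controlled by Proposition~\ref{prop:hyperoctahedral}(e) applied to $\Sigma$, while finite ones are eliminated by the fixed-point-freeness of $E$ on $V$.
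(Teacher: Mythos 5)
Your proof is correct and follows essentially the same route as the paper: connected $G$-invariant subgroups are handled by Proposition~\ref{prop:hyperoctahedral}(e), and a finite invariant subgroup is centralised by the connected group $G$, hence lies in $C_V(E)=0$ by part~(f). The paper's only cosmetic difference is that it disposes of the torsion-free case separately before invoking (f), which your argument makes unnecessary.
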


\bpr
Let $0<W<V$ be a $G$-invariant subgroup. Then $W^\circ$ is a connected $G$-invariant subgroup, hence by Proposition \ref{prop:hyperoctahedral}(e), $W^\circ=0$, that is $W$ is finite. If $V$ is torsion free, we immediately conclude that $W=0$. Otherwise, being a connected group, $G$ can act on $W$ only trivially. But then $W \leqslant C_V(\Sigma)=0$ by Proposition \ref{prop:hyperoctahedral}(f).
\epr

\begin{lemma}\label{lemma:char=0}
We can assume without loss of generality that $V$ is an elementary abelian $p$-group for some odd prime $p$.
\end{lemma}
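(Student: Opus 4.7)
The plan is to use the irreducibility of the $G$-action on $V$ (Lemma~\ref{lemma:irreducible}) to pin down the structure of the connected abelian group $V$. First I would consider the maximal divisible subgroup $D$ of $V$, which is definable, connected, characteristic, and hence $G$-invariant. Irreducibility then forces $D = 0$ or $D = V$, leading (via Fact~\ref{nilpotentgroups}) to a dichotomy: either $V$ is divisible, or $V$ has bounded exponent.

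In the divisible case, I would apply irreducibility once more to the torsion subgroup $T$ of $V$, which is definable and $G$-invariant. If $T = V$, then $V$ is a divisible abelian torsion group without involutions, i.e.\ a decent torus; by Fact~\ref{automorphismsofptori} it admits no non-trivial connected definable automorphism groups. This contradicts the fact that $G$ is connected (Proposition~\ref{prop:stabiliser-connected}(a)), acts faithfully on $V$, and is non-trivial since it contains the hyperoctahedral group (Lemma~\ref{lemma:hyperoctahedralgroup}). If instead $T = 0$, then $V$ is divisible and torsion-free, and Loveys-Wagner (Fact~\ref{loveyswagner}) yields an algebraically closed field $F$ of characteristic $0$ for which $V$ becomes an $F$-vector space, necessarily of dimension $n$, and $G$ is definably isomorphic to a subgroup of $\operatorname{GL}(V)$ via an equivalence of actions. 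Since $\rk G = n^2 = \rk \operatorname{GL}_n(F)$ by Proposition~\ref{prop:stabiliser-connected}(a) and both groups are connected, an index argument forces $G = \operatorname{GL}(V)$, completing the theorem outright in this subcase and leaving nothing to reduce.

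In the bounded-exponent case, I would decompose $V$ as a direct sum of its $p$-primary components, each definable and $G$-invariant; irreducibility forces exactly one component to be non-zero, so $V$ is a $p$-group for some prime $p$. As $V$ has no involutions, $p$ is odd. Finally, $pV$ is a definable $G$-invariant subgroup which must be strictly proper in $V$ (otherwise $V$ would be $p$-divisible, contradicting bounded exponent), so irreducibility gives $pV = 0$; that is, $V$ is an elementary abelian $p$-group.

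The delicate point is the divisible torsion-free case: one must carefully verify that Loveys-Wagner combined with the rank identity $\rk G = n^2$ yields the full conclusion of Theorem~\ref{th:sharply-n-transitive} here, so that subsequent sections of the proof are entitled to assume $V$ is elementary abelian. Everything else is a routine application of the irreducibility result (Lemma~\ref{lemma:irreducible}) to natural definable subgroups of $V$.
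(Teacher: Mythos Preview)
Your approach differs from the paper's. The paper exploits the decomposition $V = U_1 \oplus \cdots \oplus U_n$ into connected rank-$1$ summands already established in Lemma~\ref{lemma:weight-decomposition}, applies Fact~\ref{rank1groups} to each $U_i$, and uses the transitive $\Sigma$-action on $\{U_1,\dots,U_n\}$ to conclude that all $U_i$ have the same type; this immediately makes $V$ torsion-free divisible, elementary abelian, or a decent torus. You instead argue globally by testing irreducibility against characteristic subgroups of $V$. Both strategies can be made to work, but the paper's route is shorter and sidesteps a trap that catches your argument.

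The gap: in the divisible case you assert that the torsion subgroup $T$ of $V$ is definable. In general it is not---for $K^*$ with $K$ algebraically closed of characteristic $0$, the torsion is the (non-definable) group of roots of unity. The repair is straightforward: replace $T$ by its definable hull $d(T)$, which is $G$-invariant since $T$ is characteristic, and apply Lemma~\ref{lemma:irreducible} to $d(T)$. If $d(T)=0$ then $T=0$ and $V$ is torsion-free; if $d(T)=V$ then $V$ is, by definition, a decent torus, and Fact~\ref{automorphismsofptori} gives your contradiction.

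There is a second, smaller gap in the torsion-free case: you write that $V$ is ``necessarily of dimension $n$'' and that $\rk\operatorname{GL}_n(F)=n^2$, but Loveys--Wagner (Fact~\ref{loveyswagner}) does not hand you $\rk F = 1$. The paper obtains this from the $U_i$'s: each is a non-zero $F$-subspace of Morley rank $1$, hence $\rk F = 1$ and $\dim_F V = n$. Within your framework one can instead argue by rank comparison: if $\dim_F V = d$ and $\rk F = k$, then $n = dk$ and $n^2 = \rk G \leqslant \rk\operatorname{GL}_d(F) = d^2 k = n^2/k$, forcing $k=1$.
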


\bpr  By Lemma~\ref{lemma:weight-decomposition}, each $U_i$ is connected and is of Morley rank $1$, hence by Fact~\ref{rank1groups} each $U_i$ is torsion free and divisible, or an elementary abelian $p$-group, or a decent torus. Since $\Sigma$ acts transitively on the set of subgroups $U_i$, they necessarily have the same structure. If each $U_i$ is a decent torus, then $V$ is a decent torus, hence we contradict Fact~\ref{automorphismsofptori}. So we a left with two cases: $V$ is torsion free or $V$ is an elementary abelian $p$-group.

However, we can now easily prove that Theorem~{\rm \ref{th:sharply-n-transitive}} holds if\/ $V$ is a torsion free group.
 Indeed, assume that we are in that situation. By Lemma~\ref{lemma:irreducible}, $G$ acts on $V$ irreducibly. Hence we can apply the Loveys-Wagner Theorem (Fact~\ref{loveyswagner}) and conclude that $V$ is a vector space over an \acf $F$ of characteristic $0$. Notice that the $U_i$'s are non-zero vector subspaces; since they have Morley rank $1$, they are inevitably of dimension $1$ over $F$ and $\rk F =1$. Hence $V$ has dimension $n$ over $F$ and, by Fact~\ref{loveyswagner}, $G$ is definably isomorphic to a subgroup of $\operatorname{GL}_n(F)$, and the actions are equivalent.  Since $\rk(G)=n^2=\rk(\operatorname{GL}_n(F))$ and $\operatorname{GL}_n(F)$ is connected, we can conclude that $G$ is isomorphic to $\operatorname{GL}_n(F)$, and $G\curvearrowright V$ is equivalent to $\operatorname{GL}_n(F)\curvearrowright V$. This proves the torsion free $V$ case.
\epr

From now $V$ is an elementary abelian $p$-group for an odd prime $p$. This remaining case will be proven by induction on $n$, and here we take care of the basis of induction.

\begin{lemma} \label{step:bigger-3}
We can assume that $n\geqslant 3$.
\end{lemma}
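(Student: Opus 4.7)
The only case remaining to rule out is $n=2$, since we have established $n=m\geqslant 2$. In that case $V$ is a connected elementary abelian $p$-group of Morley rank $2$ (with $p$ odd), and by Proposition~\ref{prop:stabiliser-connected}(a) together with Lemma~\ref{lemma:irreducible}, $G$ is connected of Morley rank $\rk G = mn = 4$ and acts faithfully and irreducibly on $V$. The plan is to handle $n=2$ directly, finishing the theorem in that case, so that the subsequent induction in the paper can legitimately start at $n\geqslant 3$. The natural tool is Deloro's classification (Fact~\ref{deloro}), and the whole argument reduces to (i) establishing non-solvability of $G$, and then (ii) eliminating the $\operatorname{SL}_2$ alternative.

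For step (i), I would argue by contradiction: assume $G$ solvable. Then $[G,G]$ is connected and nilpotent by Fact~\ref{derived}, and decomposes as $U\ast R$ by Fact~\ref{nilpotentgroups}. A Clifford-type analysis of the action of $[G,G]$ on the rank-$1$ summands $U_1, U_2$ supplied by Lemma~\ref{lemma:weight-decomposition}, combined with Fact~\ref{zilberaction} applied to a suitable abelian normal subgroup acting on a weight space, forces $G$ to lie inside a Borel-type subgroup of $\operatorname{GL}(V)$. The rank of such a subgroup, in rank-$2$ environments, is at most $\rk U_1 + \rk U_2 + \rk\operatorname{Hom}(U_2, U_1) = 3$, which contradicts $\rk G = 4$. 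One can also exploit the presence of the hyperoctahedral subgroup $\Sigma_2 \cong D_8$ from Lemma~\ref{lemma:hyperoctahedralgroup}: its transitive permutation of $\{U_1, U_2\}$ together with a solvable $G$ would yield either a proper infinite $G$-invariant subgroup of $V$ (contradicting Lemma~\ref{lemma:irreducible}) or would again collapse the rank of $G$ below $4$.

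For step (ii), once non-solvability is secured, Fact~\ref{deloro} identifies $G\curvearrowright V$ with the natural action of either $\operatorname{GL}_2(K)$ or $\operatorname{SL}_2(K)$ on $K^2$ for some algebraically closed field $K$. The $\operatorname{SL}_2(K)$ option is incompatible with generic sharp $2$-transitivity: the orbits of $\operatorname{SL}_2(K)$ on $K^2\times K^2$ are precisely the level sets of the determinant $(v_1,v_2)\mapsto \det[v_1\mid v_2]$, each of codimension one, so no generic orbit exists. Thus $G\cong\operatorname{GL}_2(K)$ acting naturally on $V = K^2$, and the conclusion of Theorem~\ref{th:sharply-n-transitive} holds directly at $n=2$; in the remaining induction we may therefore assume $n\geqslant 3$.

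The hard part is proving non-solvability cleanly. The rank-counting sketch above is the conceptually natural route, but making it rigorous requires pinning down the exact structure of connected solvable groups of finite Morley rank that act faithfully and irreducibly on a rank-$2$ elementary abelian $p$-group---a situation not directly covered by Fact~\ref{zilberaction} (which assumes the acting group is abelian). I expect this to be handled by exploiting the hyperoctahedral subgroup $\Sigma_2$ to force a non-trivial permutation of the weight spaces under any hypothetical abelian normal subgroup, then combining Fact~\ref{liftingthecentralisers} and Fact~\ref{fact:2-action-connected} to track involution centralizers and obtain the desired rank contradiction.
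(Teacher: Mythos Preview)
Your overall strategy matches the paper's exactly: invoke Deloro's theorem (Fact~\ref{deloro}) once non-solvability is established, and then discard the $\operatorname{SL}_2$ alternative. Your step (ii) is correct and in fact more explicit than the paper, which simply says ``the theorem follows'' from Deloro; the $\operatorname{SL}_2$ case dies either by your determinant-level-set argument or, more quickly, by the rank equality $\rk G = n^2 = 4$ from Proposition~\ref{prop:stabiliser-connected}(a) versus $\rk\operatorname{SL}_2(K)=3$.

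The gap is in step (i). You correctly identify non-solvability as the crux, but your proposed route---a Clifford-type analysis bounding a solvable $G$ inside a Borel of rank~$3$---is, as you yourself note, not complete, and the machinery you invoke (Fact~\ref{liftingthecentralisers}, Fact~\ref{fact:2-action-connected}, tracking involution centralisers) is considerably heavier than what is needed. The paper's argument is a two-line trick that you nearly stumble onto when you mention $\Sigma_2\cong D_8$ but do not carry through: if $G$ is solvable then so is $V\rtimes G$, and by Fact~\ref{derived} its derived subgroup $D=[V\rtimes G,\,V\rtimes G]$ is nilpotent. Irreducibility (Lemma~\ref{lemma:irreducible}) gives $[G,V]=V$, so $V\leqslant D$; and since $\Sigma\leqslant G$, also $[\Sigma,\Sigma]=\langle e_1e_2\rangle\leqslant D$. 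Now $D$ is nilpotent and contains the $p$-group $V$ together with the $2$-element $e_1e_2$; in a nilpotent group of finite Morley rank, torsion elements of coprime order commute (cf.\ Fact~\ref{nilpotentgroups}), so $e_1e_2$ centralises $V$. But $e_1e_2$ acts as $-1$ on $V=U_1\oplus U_2$, contradicting faithfulness. This replaces your entire rank-counting programme and requires no structure theory of solvable actions on rank-$2$ modules.
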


\bpr
We have already seen that, without loss of generality, we can assume that $n\neq 1$.
If $n=2$, the theorem follows from a result by Deloro (Fact~\ref{deloro}) on groups acting on abelian groups of Morley rank $2$. We only need to check that $G$ is not solvable. If $G$ is solvable, then the semidirect product $V\ltimes G$ is a connected solvable group. Hence the derived subgroup of $V\ltimes G$ is connected and nilpotent by Fact~\ref{derived} and contains $V$ by Lemma \ref{lemma:irreducible}. The derived subgroup of $\Sigma$ is nilpotent only when $n=2$, since $\Sigma$ is isomorphic to the dihedral group of order $8$; in that specific case the commutator of $\Sigma$ equals $\langle e_1e_2\rangle$ and, being of order $2$ in a nilpotent group which also contains a $p$-group $V$, it centralises $V$, which means that the action of $G$ on $V$ is not faithful, a contradiction.
\epr

We need to introduce some notation. For an arbitrary $x\in V$, denote
\[
A_x = \{ (a_1,\dots, a_n) \in A : a_1 = x\}
\]
and
\[
B_x =\{(a_2,\dots,a_n): (x,a_2,\dots, a_n) \in A\}.
\]

Now note that, with the notation introduced in the first paragraph of Subsection~\ref{subsection:onsets}, $V^*=\pi_i(A)$ for any $i=1,\dots,n$. Moreover, being a generic subset in $V$, $V^*$ has Morley rank $n$. Also note that $A_x$ is non-empty if and only if $x\in V^*$.

\begin{lemma} In this notation, for every $x\in V^*$,
\[
\rk A_x = \rk B_x = n(n-1).
\]
\end{lemma}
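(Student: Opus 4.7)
The plan is to reduce to a dimension-count for the first-coordinate projection $\pi_1\colon A\to V^*$, exploiting the transitivity of $G$ on both $A$ and $V^*$.

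First, I would note that the map $(a_2,\dots,a_n)\mapsto (x,a_2,\dots,a_n)$ is a definable bijection $B_x\to A_x$, so $\rk B_x=\rk A_x$ and it suffices to compute $\rk A_x$.

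Next I would set up the projection: $\pi_1\colon A\to V^*$ sends $(a_1,\dots,a_n)$ to $a_1$. Its fibre over $x\in V^*$ is precisely $A_x$. Since $G$ acts transitively on the $G$-orbit $A$ and $\pi_1$ is $G$-equivariant (as $G$ acts diagonally on $V^n$), the induced action on $V^*=\pi_1(A)$ is transitive. Hence for any $x,y\in V^*$, choosing $g\in G$ with $g\cdot x=y$ gives $g\cdot A_x=A_y$, so all fibres $A_x$ are definably isomorphic and in particular have equal Morley rank.

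Finally, I would apply the Morley-rank fibre formula to $\pi_1$: since all fibres have the same rank, this common rank equals $\rk A-\rk V^*$. By Proposition~\ref{prop:stabiliser-connected}, $\rk A=\rk V^n=n^2$; and $V^*$ is a generic subset of $V$, so $\rk V^*=\rk V=n$. Thus $\rk A_x=n^2-n=n(n-1)$ for every $x\in V^*$, which, combined with the first step, finishes the proof.

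The argument is essentially a bookkeeping exercise in Morley rank; the only point requiring mild care is ensuring that the uniformity of fibre ranks comes from $G$-conjugacy (so we do not need to restrict to a generic $x$), which is immediate from the transitivity of $G$ on $V^*$. I do not anticipate any genuine obstacle.
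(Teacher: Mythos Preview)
Your proof is correct and follows essentially the same approach as the paper: both argue that the fibres $A_x$ partition $A$, all have the same Morley rank (you make explicit that this is because $G$ permutes them transitively), and then subtract $\rk A-\rk V^*=n^2-n$; the bijection $B_x\leftrightarrow A_x$ handles the other equality. The only difference is that you spell out the $G$-equivariance justification for the uniformity of fibre ranks, which the paper leaves implicit.
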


\bpr
For $x\in V^*$, the sets $A_x$ form a  partition of $A$ and all have the same Morley rank. 
Hence, for every $x\in V^*$,
 \[
 \rk A_x = \rk A-\rk V^*=n^2-n.
 \]
  Since $A_x$ is in one-to-one correspondence with $B_x$, the result follows.
\epr

We return to analysis of one $n$-tuple $\overline{\vr{v}} = (\vr{v}_1,\dots, \vr{v}_n) \in A$; we denoted $\vr{v}= \vr{v}_1$. We use notation introduced in Lemma \ref{lemma:weight-decomposition}. In addition, we denote  $H= C_G(\vr{v})$ and  $W=C_V(H)$.

\begin{lemma} \label{lemma:W-infinite} \label{lemma:U1-inW} In this notation,

\bi
\item[{\rm (a)}] $H$  is connected and acts sharply transitively on $A_\vr{v}$ and $B_\vr{v}$.
\item[{\rm (b)}] $W$ is infinite.
\item[{\rm (c)}] Every involution $e_i$ normalises $H$ and $W=  C_V(H)$.
\item[{\rm (d)}]$H$ centralises one of the subgroups $U_i$, $i = 1,\dots, n$.
\ei
\end{lemma}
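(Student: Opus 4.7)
To prove (a) I note that $\vr{v}=\vr{v}_1$ is the first coordinate of the generic tuple $\overline{\vr{v}}\in A$, hence lies in the generic set $V^*$; Proposition \ref{prop:stabiliser-connected}(b) then yields connectedness of $H=C_G(\vr{v})$. Sharp transitivity of $G$ on $A$ restricts to a sharp transitive $H$-action on $A_\vr{v}$, which via the bijection that forgets the constant first coordinate transports to a sharp transitive action on $B_\vr{v}$. For (c), Lemma \ref{lemma:weight-decomposition} gives $e_1\vr{v}=-\vr{v}$ (since $\vr{v}\in U_1$) and $e_i\vr{v}=\vr{v}$ for $i\geqslant 2$ (since $e_i$ centralises $U_1$); in every case $e_iHe_i^{-1}=C_G(e_i\vr{v})=C_G(\pm\vr{v})=H$, so each $e_i$ normalises $H$ and therefore also its characteristic subgroup $W=C_V(H)$.

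The heart of the argument is (b). Each $e_i$ with $i\geqslant 2$ fixes $\vr{v}$, so $E'=\langle e_2,\ldots,e_n\rangle\leqslant H$ and consequently $W\leqslant C_V(E')$. Using the weight-space decomposition $V=\bigoplus_{i=1}^n U_i$ of Lemma \ref{lemma:weight-decomposition} together with the absence of $2$-torsion in $V$, a direct computation shows that a vector $\sum u_i$ is fixed by every $e_j$, $j\geqslant 2$, iff $u_j=0$ for all such $j$; hence $C_V(E')=U_1$ and $W\leqslant U_1$. As $U_1$ is connected of Morley rank $1$, the definable subgroup $W$ is either equal to $U_1$---in which case (b) is proved and (d) follows at once with $i=1$, since by construction $H$ centralises $W$---or finite.

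To exclude the finite case I apply Proposition \ref{prop:W-infinite} to the $H$-action on $V$: $H$ is connected by (a), $V$ is a connected elementary abelian $p$-group by Lemma \ref{lemma:char=0}, and the orbit $H\cdot\vr{v}_2$ is generic in $V$ since it is the first-coordinate projection of the generic set $B_\vr{v}\subseteq V^{n-1}$. For the fourth hypothesis I invoke Lemma \ref{step:bigger-3} to assume $n\geqslant 3$ and apply Proposition \ref{prop:stabiliser-connected}(b) to the generically sharply $(n-1)$-transitive action of $H$ on $V$ (of degree $n-1\geqslant 2$): this gives that $C_H(x)$ is connected for generic $x\in V$, and combining with Fact \ref{zilberindthm} applied to the connected $C_H(x)$ yields $C_H(x)=C_H(\bar x)^\circ$. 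Proposition \ref{prop:W-infinite} then forces $W=0$, contradicting $0\ne\vr{v}\in W$, and (d) follows as before with $i=1$.

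The main obstacle is the last step of (b): promoting the easy equality $C_H(x)=C_H(\bar x)^\circ$ to the full connectedness $C_H(\bar x)=C_H(x)$ that Proposition \ref{prop:W-infinite} requires. A rank count shows that, a priori, the finite quotient $C_H(\bar x)/C_H(x)$ could embed into $W$ as a genuine component group, so this is where the extra rigidity of our setup---the containment $W\leqslant U_1$, the action of $e_1$ on $W$ by negation established in (c), and the $\Sigma$-structure inside $N_G(H)$---must be used to rule out nontrivial components.
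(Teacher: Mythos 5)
Your parts (a) and (c) are correct and essentially the paper's arguments, and your observation that $E'=\langle e_2,\dots,e_n\rangle\leqslant H$ forces $W=C_V(H)\leqslant C_V(E')=U_1$, so that $W$ is either finite or equal to $U_1$, is a pleasant sharpening of the paper's treatment of (d) (the paper instead decomposes $W=\bigoplus_i (W\cap U_i)$ and extracts an infinite summand $U_k\leqslant W$). The problem is (b), and you have in effect named it yourself: your argument reduces (b) to the connectedness of $C_H(\bar x)$ for generic $\bar x\in\overline V=V/W$, which is precisely the hypothesis that Proposition~\ref{prop:W-infinite} needs, and you never prove it. Applying Proposition~\ref{prop:stabiliser-connected} to the action of $H$ on $V$ only yields that $C_H(x)$ is connected, i.e.\ $C_H(x)=C_H(\bar x)^\circ$, and, as your last paragraph concedes, the finite quotient $C_H(\bar x)/C_H(x)$ embeds into $W$ and nothing in your argument rules out its being nontrivial. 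So the contradiction ``$W=0$, against $\vr{v}\in W$'' is obtained only modulo an unverified claim, and clause (b) --- the one on which the rest of the induction rests --- is not established.

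The idea you are missing is the paper's: when $W$ is assumed finite, do not work on $V$ and try to transfer across the finite kernel; instead pass to the quotient action of $H$ on $\overline V$ itself. The paper first checks that this action is faithful (if $h$ centralises $\overline V$, then $[h,V]$ is a connected subgroup of the finite group $W$, hence trivial), then notes by a rank count that the image $\overline B_{\vr{v}}$ of $B_{\vr{v}}$ is generic in $\overline V^{\,n-1}$ (the fibres of $V^{n-1}\to\overline V^{\,n-1}$ are finite), so that $H$ acts generically $(n-1)$-transitively on $\overline V$ with generic orbit $\overline B_{\vr{v}}$, and it applies Proposition~\ref{prop:stabiliser-connected} to this quotient action, obtaining connectedness of the stabiliser $C_H(\bar x)$ of a generic point of $\overline V$ directly; Proposition~\ref{prop:W-infinite} then gives $W=0$, a contradiction. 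Note that this route still requires knowing that the point stabilisers of the induced action on $\overline B_{\vr{v}}$ behave well --- the ``offset in $W$'' phenomenon you worry about is exactly what must be controlled, and arguments of that flavour (an element moving a coordinate by an element of $U_1$ while preserving a complementary invariant subgroup) appear later in the paper, e.g.\ in the proof of Lemma~\ref{lemma:e1-fp-free}. To complete your version you must either carry out the quotient-action argument of the paper or otherwise kill the finite group $C_H(\bar x)/C_H(x)$; as written, the proof of (b) is incomplete.
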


\bpr (a) It is easy to see that $H$ acts on $A_\vr{v}$ and hence on $B_\vr{v}$ sharply transitively. Now the connectedness of $H$ follows from
Proposition~\ref{prop:stabiliser-connected}.

(b) Since $\vr{v}\in W$, $W$ is non-zero. Assume that $W$ is finite and consider $\overline{V} = V/W$ and the action of $H$ on $\overline{V}$. Observe, first of all, that the action of $H$ on $\overline{V}$ is faithful. Indeed, if $h \in C_H(\overline{V})$ then the commutator $[h,V] \leqslant W$ being finite and connected (because $V$ is connected) is trivial. Thus, $h$ centralises $V$ and therefore $h=1$ because $H<G$ acts  on $V$ faithfully. Take the image $\overline{B}_\vr{v}$ in $\overline{V}^{n-1}$; from comparing the ranks we see that $\overline{B}_\vr{v}$ is generic in $\overline{V}^{n-1}$. In view of Proposition~\ref{prop:stabiliser-connected}, we conclude that, for a generic $\bar{x} \in \overline{V}$, $C_H(\bar{x})$ is connected.  We can now apply Proposition \ref{prop:W-infinite} and conclude that $W=0$, a contradiction.

(c) Each $e_i$ normalises $\langle \vr{v}\rangle$, therefore it normalises $H = C_G(\langle \vr{v}\rangle)$  and $W=C_V(H)$.

(d) Let $W_i = [W, e_i] = W \cap U_i$, $i=1,\dots, n$. Since $W$ is infinite by Clause (b), at least one of subgroups $W_k$ in the decomposition $W=W_1 \oplus\cdots\oplus W_n$  is infinite and, being a subgroup of the group $U_k$ of Morley rank $1$ and Morley degree $1$, equals $U_k$. Hence $U_k \leqslant W$.
\epr

 Without loss of generality, we can assume that, in Clause (d) of Lemma~\ref{lemma:U1-inW},  $k=1$ and $U_1 \leqslant W$.

 Now we study $\widetilde{V} = V/U_1$. Let
\[
\alpha: V \longrightarrow \widetilde{V}
\]
be the natural homomorphism. Notice that $\alpha$ preserves the action of the group $H$.
Define $\widetilde{B}_\vr{v}\subseteq \widetilde{V}^{n-1}$ as the image of $B_\vr{v} \subseteq V^{n-1}$
under the componentwise application of the homomorphism $\alpha$.

\begin{lemma} \label{lemma:tilde-B-v-generic} \label{lemma:H-generically-(n-1)-transitive-on-tilde-V}
In this notation,
\bi
\item[{\rm (a)}] $\rk \widetilde{B}_\vr{v} = (n-1)^2$. In particular, $\widetilde{B}_\vr{v}$ is generic in $\widetilde{V}^{n-1}$.

\item[{\rm (b)}] $H$ acts on $\widetilde{V}$ generically $(n-1)$-transitively.
\ei
\end{lemma}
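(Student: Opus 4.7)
The plan is to compute $\rk \widetilde{B}_\vr{v}$ via the standard fiber-dimension inequality applied to the componentwise projection $\alpha^{n-1}\colon B_\vr{v} \to \widetilde{B}_\vr{v}$, and then to transport transitivity from $B_\vr{v}$ to its image using $H$-equivariance of $\alpha$. First I would verify that the set-up is internally consistent: because $U_1 \subseteq W = C_V(H)$, the subgroup $U_1$ is pointwise fixed (and in particular set-wise invariant) under $H$, so $H$ acts definably on $\widetilde{V} = V/U_1$ and the map $\alpha$, together with its componentwise extension $\alpha^{n-1}$, intertwines the $H$-actions on $V^{n-1}$ and $\widetilde{V}^{n-1}$.

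For part (a), every fiber of $\alpha^{n-1}|_{B_\vr{v}}$ over a point of $\widetilde{B}_\vr{v}$ lies inside a coset of $U_1^{n-1}$ in $V^{n-1}$, and so has Morley rank at most $(n-1)\rk U_1 = n-1$. Applying the standard rank inequality for definable surjections yields
\[
n(n-1) \;=\; \rk B_\vr{v} \;\leqslant\; \rk \widetilde{B}_\vr{v} + (n-1),
\]
so $\rk \widetilde{B}_\vr{v} \geqslant (n-1)^2$. Since $\widetilde{B}_\vr{v} \subseteq \widetilde{V}^{n-1}$ and $\rk \widetilde{V}^{n-1} = (n-1)^2$, equality must hold, and $\widetilde{B}_\vr{v}$ is generic in $\widetilde{V}^{n-1}$.

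For part (b), Lemma \ref{lemma:W-infinite}(a) furnishes that $H$ acts (sharply) transitively on $B_\vr{v}$; since $\alpha^{n-1}$ is an $H$-equivariant surjection onto $\widetilde{B}_\vr{v}$, the group $H$ acts transitively on $\widetilde{B}_\vr{v}$. By (a) this orbit is generic in $\widetilde{V}^{n-1}$, which is exactly the assertion that the induced action of $H$ on $\widetilde{V}$ is generically $(n-1)$-transitive. No real obstacle arises here; the only step requiring any care is the fiber-rank bound, and it is immediate from the containment of each fiber in a coset of $U_1^{n-1}$.
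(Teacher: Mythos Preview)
Your proof is correct and follows essentially the same approach as the paper's own argument: bound the fibers of the componentwise projection by $\rk U_1^{\,n-1}=n-1$, apply the rank inequality to get $\rk \widetilde{B}_\vr{v}\geqslant (n-1)^2$, and squeeze against $\rk \widetilde{V}^{\,n-1}=(n-1)^2$; then transport transitivity from $B_\vr{v}$ to $\widetilde{B}_\vr{v}$ via the $H$-equivariant surjection. The only difference is that you are more explicit about why $\alpha$ is $H$-equivariant and about the fiber bound, which the paper leaves implicit.
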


\bpr
(a) Counting ranks of fibers of $\pi$, we get
\begin{eqnarray*}
    \rk \widetilde{B}_\vr{v} &\geqslant& \rk B_\vr{v} - (n-1)\\ &=& n^2-n - (n-1)\\ &=& (n-1)^2.
\end{eqnarray*}
On the other hand, $\widetilde{B}_\vr{v}\subseteq \widetilde{V}^{n-1}$ and the latter has rank $(n-1)^2$. Hence $\rk \widetilde{B}_\vr{v} = (n-1)^2$.

(b) Since $G$ acts transitively on $A$, $H$ acts transitively on $A_\vr{v}$ and $B_\vr{v}$, hence acts transitively on $\widetilde{B}_\vr{v}$ which is generic  in $\widetilde{V}^{n-1}$  by part (a).
\epr

Denote $Q = C_H(\widetilde{V})$,  $K = C_H(e_1)$, and $U= U_2 \oplus \cdots \oplus U_n$.

\begin{lemma}   \label{lemma:e1-fp-free}
$Q$ is an abelian $p$-group of bounded exponent, $e_1$ inverts every element in $Q$ and $H = Q\rtimes K$.

\end{lemma}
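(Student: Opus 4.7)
The plan is to identify $Q$ concretely as the kernel of the $H$-action on $\widetilde V = V/U_1$, establish its $p$-group structure via a commutator-shearing map, and then exploit the involutory automorphism of $H$ given by $e_1$ to split off $K$.

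\emph{Identifying and analysing $Q$.} I would first note that every $h \in H$ centralises $U_1 \leq W$, so the $H$-action on $V$ already preserves $U_1$, and $Q = \{h \in H : [h,V] \leq U_1\}$. For $h \in Q$ the map $\lambda_h : v \mapsto [h,v]$ is a group homomorphism $V \to U_1$ (because $V$ is abelian), and since $Q$ centralises $U_1$, the assignment $h \mapsto \lambda_h$ is itself a group homomorphism $Q \to \operatorname{Hom}(V, U_1)$ whose kernel is $C_H(V) = 1$. Since $V$ is elementary abelian of odd prime exponent $p$ by Lemma \ref{lemma:char=0}, so is $\operatorname{Hom}(V, U_1)$, and therefore $Q$ is elementary abelian of exponent $p$ --- in particular an abelian $p$-group of bounded exponent.

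\emph{Inversion by $e_1$.} Because $e_1$ inverts $U_1$ and centralises $U = U_2 \oplus \cdots \oplus U_n$, and because $e_1$ normalises $H$ by Lemma \ref{lemma:U1-inW}(c), a direct computation on the decomposition $V = U_1 \oplus U$ shows that conjugation by $e_1$ sends $\lambda_h$ to $-\lambda_h$. Via the embedding $Q \hookrightarrow \operatorname{Hom}(V,U_1)$ this reads $e_1 h e_1 = h^{-1}$ for every $h \in Q$.

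\emph{The semidirect decomposition.} The pivotal observation is that $e_1$ itself acts trivially on $\widetilde V$, since $[e_1,V] \leq U_1$. For any $h \in H$, the element $h\cdot e_1 h^{-1} e_1$ therefore fixes every class modulo $U_1$ and so lies in $Q$. Since $Q$ has odd exponent, squaring is a bijection on $Q$, so I can take the unique $q \in Q$ with $q^2 = h\cdot e_1 h^{-1} e_1$ and set $k = q^{-1} h$; a short calculation using that $e_1$ inverts $Q$ then gives $e_1 k e_1 = k$, i.e.\ $k \in K$. Hence $H = QK$; and $Q \cap K = 1$ because any element of $Q$ centralised by $e_1$ must equal its own inverse and so be trivial in a group of odd exponent. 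With $Q$ normal in $H$ (being the kernel of an action), this yields $H = Q \rtimes K$.

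The main obstacle, at least conceptually, is this last step: the decomposition rests on two structural facts converging --- that $e_1$ acts trivially on $\widetilde V$, ensuring $h\cdot e_1 h^{-1} e_1 \in Q$, and that $Q$ has odd exponent, permitting unique square roots. Once these are in hand, the Thompson-style argument is essentially mechanical, and the earlier parts reduce to linear-algebraic bookkeeping on the weight decomposition of Lemma \ref{lemma:weight-decomposition}.
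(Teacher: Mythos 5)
Your proof is correct, and at each of the three stages it takes a genuinely different route from the paper. For the structure of $Q$, the paper first gets commutativity from $[V,Q]\leqslant U_1\leqslant C_V(Q)$ via the Three Subgroups Lemma, and then invokes the structure theory of nilpotent groups of finite Morley rank (the $U*R$ decomposition) applied to $V\rtimes Q$ to see that $Q$ is a $p$-group of bounded exponent; your embedding $h\mapsto\lambda_h$ of $Q$ into $\operatorname{Hom}(V,U_1)$ (legitimate here because, after the reduction of Lemma~\ref{lemma:char=0}, $V$ has exponent $p$ and $H$ centralises $U_1\leqslant W$) delivers all of this at once, more elementarily, and even yields the sharper conclusion that $Q$ is elementary abelian of exponent exactly $p$. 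For the inversion, the paper argues in the opposite order: it first proves $K\cap Q=1$ by a sharp-transitivity argument (an element of $C_H(e_1)\cap Q$ is forced to fix the tuple $(\vr{v},\vr{v}_2,\dots,\vr{v}_n)\in A$), and then gets inversion from Fact~\ref{involutoryautomorphisms}(a) since $e_1$ acts on $Q$ without fixed points; you compute the inversion directly from $\lambda_{h^{e_1}}=-\lambda_h$ and then deduce $Q\cap K=1$ from odd exponent, so you never need the sharpness of the action at this point. For the splitting $H=QK$, the paper applies the lifting-of-centralisers result (Fact~\ref{liftingthecentralisers} with $\pi=\{p\}$, $t=e_1$) to $H/Q$, whereas your square-root argument ($q^2=he_1h^{-1}e_1$, $k=q^{-1}h$, then $k^{e_1}=k$) is in effect a hands-on proof of that coprime-action fact in the special case of an involution acting on a normal abelian subgroup of odd exponent that it inverts. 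The trade-off: your version is more self-contained and exploits the exponent-$p$ reduction heavily, while the paper's version leans on standard machinery that does not depend on $V$ being elementary abelian, and its sharpness computation for $K\cap Q=1$ is recycled almost verbatim in the next lemma. One cosmetic point: when you place $he_1h^{-1}e_1$ in $Q$, say explicitly that it lies in $H$ (because $e_1$ normalises $H$, Lemma~\ref{lemma:U1-inW}(c)), since $Q=C_H(\widetilde V)$ is defined inside $H$.
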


\bpr First, we can prove that $Q$ is an abelian $p$-group. Indeed, by construction of $\widetilde{V}$, we have $[V,Q] \leqslant U_1 \leqslant W \leqslant C_V(H) \leqslant C_V(Q)$, hence $[[V,Q],Q] =0$ and by the Three Subgroups Lemma $[[Q,Q], V] = 0$ and hence $[Q,Q]=1$. $V \rtimes Q$ is a nilpotent group, hence by Fact~\ref{nilpotentgroups}, $p$-elements and $p^\perp$-elements in $V \rtimes Q$ commute; it means that $p^\perp$-elements in $Q$ centralise $V$, and therefore equal $1$, that is, $Q$ is a $p$-group.  Using $C_Q(V)=1$ and Fact \ref{nilpotentgroups} one more time gives us that $Q$ is of bounded exponent.

$K \cap Q = 1$. Indeed consider some  $h \in C_H(e_1) \cap Q$ and its action on the $n$-tuple $(\vr{v},\vr{v}_2,\dots,\vr{v}_n)$. Since $h$ centralises $\widetilde{V} = V/U_1$,
\[
\vr{v}_i^h = \vr{v}_i + w_i \quad \mbox{ for some } \quad w_i \in U_1, \quad i=2,\dots, n.
\]
On the other hand, since $h$ centralises $e_1$, it leaves invariant $C_V(e_1)=U$, therefore $\vr{v}_i^h \in  U$ and $w_i \in U_1 \cap U =0$, $i=2,\dots, n$. We should remember also that $\vr{v}^h=\vr{v}$ by definition of $H=C_G(\vr{v})$. Therefore $h$ fixes  $(\vr{v},\vr{v}_2,\dots,\vr{v}_n) \in A$, hence $h=1$.

Now the involution $e_1$ acts on $Q$ without fixed points, therefore the action of $e_1$ on $Q$ is by inversion by Fact~\ref{involutoryautomorphisms}.

The involution $e_1$ centralises $\widetilde{V} = V/U_1$, hence $[H, e_1] \leqslant C_H(\widetilde{V})$, that is, $e_1$ centralises $H/Q$. But $Q$ is a $2^\perp$-group, therefore the centraliser of $e_1$ in $H/Q$ can be lifted to $H$, by Fact~\ref{liftingthecentralisers} applied with $\pi =\{p\}$ and  $t =  e_1$, that is, $H = QC_H(e_1)$. But $K=C_H(e_1)$ intersects with $Q$ trivially, hence $H= Q \rtimes K$.
\epr

\begin{lemma}
$K$ acts sharply generically $(n-1)$-transitively on $U$.
\end{lemma}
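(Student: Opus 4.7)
The key observation is that, since $K=C_H(e_1)$ centralises $e_1$, it preserves both eigenspaces $U_1=[V,e_1]$ and $U=C_V(e_1)=\bigoplus_{i\geqslant 2}U_i$ of $e_1$ on $V$. In particular, the projection $\pi_U\colon V\to U$ along $U_1$ is $K$-equivariant, and composing it with the quotient map $V\to \widetilde V$ yields a $K$-equivariant isomorphism $U\cong\widetilde V$. My plan is to transport the generically $(n-1)$-transitive action of $H$ on $\widetilde V$ given by Lemma~\ref{lemma:H-generically-(n-1)-transitive-on-tilde-V} down to a sharply generically $(n-1)$-transitive action of $K$ on $U$ via this isomorphism.

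Concretely, I would set $C=\pi_U(B_\vr{v})\subseteq U^{n-1}$, applying $\pi_U$ componentwise; under the $K$-equivariant identification $U\cong\widetilde V$, the set $C$ corresponds to $\widetilde B_\vr{v}$, so $\rk C=(n-1)^2$ by Lemma~\ref{lemma:tilde-B-v-generic}(a) and $C$ is generic in $U^{n-1}$. For transitivity of $K$ on $C$, Lemma~\ref{lemma:H-generically-(n-1)-transitive-on-tilde-V}(b) gives transitivity of $H$ on $\widetilde B_\vr{v}$, and since $Q=C_H(\widetilde V)$ acts trivially on $\widetilde V$ while $H=Q\rtimes K$ by Lemma~\ref{lemma:e1-fp-free}, the complement $K$ already acts transitively on $\widetilde B_\vr{v}$, and hence on $C$.

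Sharpness will come from the specific structure of the chosen $n$-tuple $\overline{\vr v}=(\vr v_1,\dots,\vr v_n)$. Since $\vr v_i\in U_i\subseteq U$ for $i\geqslant 2$, the tuple $c_0=(\vr v_2,\dots,\vr v_n)$ already lies in $U^{n-1}$ and satisfies $\pi_U(b_0)=c_0$ for $b_0=(\vr v_2,\dots,\vr v_n)\in B_\vr{v}$, so $c_0\in C$. Any $k\in K$ fixing $c_0$ fixes each $\vr v_i$ for $i\geqslant 2$ and also fixes $\vr v_1=\vr v$ because $k\in H=C_G(\vr v)$; thus $k$ fixes the whole $n$-tuple $\overline{\vr v}\in A$, and sharp transitivity of $G$ on $A$ forces $k=1$. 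Transitivity of $K$ on $C$ then makes every point stabiliser trivial, which is the claim. The only delicate step is keeping the two descriptions of the $K$-action (on $U$ and on $\widetilde V$) properly aligned; beyond that bookkeeping I foresee no real obstacle, since the substantive work has already been carried out in Lemmas~\ref{lemma:H-generically-(n-1)-transitive-on-tilde-V} and~\ref{lemma:e1-fp-free}.
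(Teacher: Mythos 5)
Your proposal is correct and follows essentially the same route as the paper's proof: transporting the generic $(n-1)$-transitivity of $H$ on $\widetilde V$ to $K$ acting on $U$ via the $K$-equivariant projection along $U_1$ (the paper writes it explicitly as $x\mapsto\frac12(x+x^{e_1})$) together with the decomposition $H=Q\rtimes K$, and then getting sharpness from the triviality of the stabiliser of $(\vr{v}_2,\dots,\vr{v}_n)$, which follows from the sharp transitivity of $G$ on $A$ since any such element of $K\leqslant H=C_G(\vr{v})$ fixes the whole tuple $\overline{\vr{v}}$. Your version of the sharpness step, working directly with the elements $\vr{v}_i\in U$ rather than their images in $\widetilde V$, is if anything slightly cleaner bookkeeping than the paper's, but it is the same argument.
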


\bpr Observe first that, by Lemma \ref{lemma:H-generically-(n-1)-transitive-on-tilde-V}, $K$ acts transitively on $\widetilde{B}_v$ and therefore generically $(n-1)$-transitively on $\widetilde{V}$.

Next we use some basic algebra: the map $\gamma: V \longrightarrow U$ defined by
\[
\gamma(x) = \frac{1}{2}(x+x^{e_1})
\]
is the projection of $V$ onto $U=C_V(e_1)$ with the kernel $U_1$. Also, $\gamma$ preserves the action of $K$.

Finally, since $V = U_1\oplus U$ and $U$ is $K$-invariant, the restriction $\gamma|_U: U \longrightarrow \widetilde{V}$ is a $K$-equivariant isomorphism. We denote by $\beta: \widetilde{V} \longrightarrow U$ the map inverse to $\alpha|_U$, then $\gamma = \beta\circ \alpha$.

The $K$-invariance of the map $\beta$ means that  $K$ acts generically $(n-1)$-transitively on $U$, so we only need to prove the sharpness on the action on $\widetilde{B}_\vr{v}$.  We argue the same way as in proof of Lemma \ref{lemma:e1-fp-free}. Let $L= C_K((\vr{v}_2,\dots, \vr{v}_n))$ and $h \in L$. If $h$ fixes the images $\widetilde{\vr{v}_i}$ in $\widetilde{V}$, then $\vr{v}_i^h = \vr{v}+w_i$ for some $w_i\in U_1$. But $\vr{v}_i \in U$ and $U$ is invariant under action of $h \in K$, hence all $w_i=0$ and $h$ fixes $(\vr{v},\vr{v}_2,\dots, \vr{v}_n)$ hence equals $1$.

\epr

We are now in position to apply the inductive assumption:
\bq
$U$ has a structure of an $(n-1)$-dimensional vector space over an algebraically closed field $F$ and $K$ acts on $U$ as $\operatorname{GL}_{n-1}(F)$ in its natural action on $F^{n-1}$.
\eq

\begin{lemma}[Identifications] \label{lemma:identifications}
We can choose a maximal torus $R$  in $K$ with the following properties:
\bi
\item $R = R_2 \times \cdots \times R_n$, where
\item each $R_i\simeq F^*$ is a torus of Morley rank $1$;
\item $e_i \in R_i$, $i=2,\dots,n$;
\item each $R_i$ acts trivially on $C_V(e_i)$, $i=2,\dots,n$;
\item $[V,R_i]=U_i$ and $C_{U_i}(R_i)=0$, $i=2,\dots, n$;

\ei
\end{lemma}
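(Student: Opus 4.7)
The plan is to lift the diagonal maximal torus of $\operatorname{GL}_{n-1}(F)$ from the inductive identification on $U$ up to a torus of $K$ with the right behavior on all of $V$. The single fact that glues the picture together is the earlier observation $U_1 \leqslant W = C_V(H)$, which ensures that anything in $K$ acts trivially on $U_1$.

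First I would verify that each $e_i$ for $i = 2,\dots,n$ lies in $K$: each $e_i$ centralizes $U_1$ by Lemma~\ref{lemma:weight-decomposition}, hence fixes $\vr{v}$ and so lies in $H$; each $e_i$ commutes with $e_1$ inside the abelian base group $E$, so $e_i \in C_H(e_1)=K$. Moreover $K \leqslant H$ centralizes $U_1$, so the $K$-action on $V$ splits as a trivial action on $U_1$ together with the faithful action on $U = C_V(e_1)$ identified with the natural action of $\operatorname{GL}_{n-1}(F)$ on $F^{n-1}$ by the inductive hypothesis.

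Next I would identify the $F$-structure of the weight decomposition. For $i \geqslant 2$ the subspace $U_i$ is the $(-1)$-eigenspace of $e_i$ acting on $U$, so it is an $F$-subspace of $U$; since $\rk U_i = 1 = \rk F$, it is a $1$-dimensional $F$-subspace, and $U = U_2 \oplus \cdots \oplus U_n$ is a direct sum decomposition into lines over $F$. Choose $R$ to be the subgroup of $K$ stabilizing each summand $U_i$ setwise. Under the identification $K \cong \operatorname{GL}_{n-1}(F)$ this is the diagonal maximal torus with respect to any basis $(u_2,\dots,u_n)$ with $u_i \in U_i \smallsetminus \{0\}$, giving at once $R = R_2 \times \cdots \times R_n$ with $R_i \cong F^*$ acting on $U_i$ by scalar multiplication and trivially on $U_j$ for $j \neq i$, $j \geqslant 2$; each $R_i$ has Morley rank $1$.

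For $e_i \in R_i$: the unique involution $-1 \in R_i$ acts on $U$ by inverting $U_i$ and centralizing the other $U_j$, which is exactly how $e_i$ acts on $U$; faithfulness of the $K$-action on $U$ forces these to coincide. For the remaining two bullets, the action of $R_i \leqslant K \leqslant H$ on $V$ is trivial on $U_1$ (because $U_1 \leqslant W$), trivial on each $U_j$ for $j \geqslant 2,\ j \neq i$ (by construction of $R$), and is the scalar action of $F^*$ on the line $U_i \cong F$. The trivial pieces together give $C_V(e_i) = U_1 \oplus \bigoplus_{j \geqslant 2,\, j\neq i} U_j$ centralized by $R_i$, while the scalar action on $U_i$ gives $[V,R_i] = [U_i,R_i] = U_i$ and $C_{U_i}(R_i)=0$. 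There is no real obstacle here; the only point requiring care is the bookkeeping between the $K$-structure on $U$ (supplied by induction) and the $K$-structure on the full $V$, and this is exactly what the containment $U_1 \leqslant W$ is for.
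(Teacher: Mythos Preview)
Your argument is correct and follows the same route as the paper: both use $U_1\leqslant W=C_V(H)$ to get $[K,U_1]=0$, identify $K$ with $\operatorname{GL}_{n-1}(F)$ acting on $U$, and take $R$ to be the diagonal torus for a basis adapted to the line decomposition $U=U_2\oplus\cdots\oplus U_n$ (the paper simply picks the basis $\vr{v}_2,\dots,\vr{v}_n$, which works since $\vr{v}_i\in U_i$). Your write-up is more explicit---in particular the verification that $e_i\in K$ and that $e_i$ coincides with the unique involution of $R_i$ via faithfulness on $U$---but the underlying proof is the same ``basic linear algebra plus $[K,U_1]=0$'' that the paper invokes.
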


\bpr Recall that $K \leqslant H$ and that by our choice of notation $U_1 \leqslant W=C_V(H)$, therefore $[K,U_1]=0$.
Generic $(n-1)$-tuples in $U \simeq F^{n-1}$ are linearly independent and are bases of $U$ as a vector space over $F$. Take for $R$ the group of diagonal matrices with respect to the basis $\vr{v}_2,\dots, \vr{v}_n$, then all statements above follow from basic linear algebra and the fact that $[K,U_1]=0$.
\epr

\begin{lemma} $G$ contains a torus $R_1\times R_2 \times \cdots \times R_n$ where each $R_i \simeq F^*$.
\end{lemma}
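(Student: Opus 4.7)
The plan is to manufacture $R_1$ by conjugating $R_2$ via an element of the hyperoctahedral group already sitting inside $G$. Namely, take $\sigma := s_{(1,2)} \in \Sigma_{\overline{\vr{v}}} \leqslant G$, the element corresponding to the transposition $(1,2) \in {\rm Sym}_n$. A direct calculation from the formulas in Lemma~\ref{lemma:hyperoctahedralgroup} shows that $\sigma e_2 \sigma^{-1} = e_1$, $\sigma e_1 \sigma^{-1} = e_2$, and $\sigma e_j \sigma^{-1} = e_j$ for $j\geqslant 3$. Since $U_i = [V,e_i]$, it follows that $\sigma$ swaps $U_1$ and $U_2$ setwise and leaves each $U_j$, $j\geqslant 3$, invariant.

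I would then define $R_1 := \sigma R_2 \sigma^{-1}$, which is a subgroup of $G$ isomorphic to $R_2 \cong F^*$. Chasing $v \in U_j$ through $\sigma r \sigma^{-1}$ for $r \in R_2$, and using the data of Lemma~\ref{lemma:identifications} (namely $[V,R_2]=U_2$, $C_{U_2}(R_2)=0$, and $R_2$ acts trivially on $U_k$ for $k\neq 2$), one obtains: $R_1$ acts on $U_1$ as the full $F^\ast$-scaling, and $R_1$ acts trivially on $U_j$ for every $j\geqslant 2$.

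Finally, to see that $R_1$ commutes with $R_2\times\cdots\times R_n$ and that the product is direct, I would argue summand by summand. For each $i\geqslant 2$ and each $U_j$, at least one of $R_1,R_i$ acts as the identity on $U_j$; hence their induced actions commute on each $U_j$, and so they commute on $V=\bigoplus U_j$. Faithfulness of the $G$-action on $V$ then gives $[R_1,R_i]=1$ in $G$. Directness follows in the same spirit: an element of $R_1 \cap (R_2\times\cdots\times R_n)$ acts trivially on $U_j$ for $j\geqslant 2$ (as an element of $R_1$) and on $U_1$ (as an element of $R_2\times\cdots\times R_n$), hence trivially on $V$, hence equals $1$. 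I do not foresee a real obstacle here; the only point requiring a careful check is the identity $\sigma e_2 \sigma^{-1} = e_1$, which is an elementary computation from the wreath-product formulas of Lemma~\ref{lemma:hyperoctahedralgroup}.
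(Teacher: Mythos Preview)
Your proposal is correct and shares the paper's core idea: obtain $R_1$ by conjugating $R_2$ via an element of $\Sigma_{\overline{\vr{v}}}$ that swaps $U_1$ and $U_2$. The difference lies in how the direct product structure $R_1 \times R$ is established. The paper first proves $C_V(H)=U_1$ and $N_H(U)^\circ = K$ (invoking Fact~\ref{bbpseudo:cor1.3}), then sets $L = R_1K$ and identifies the torus as the intersection $\bigcap_i N_L(U_i)$; commutation of $R_1$ with $R$ is left implicit in this description. Your argument is more elementary and self-contained: you check commutation and trivial intersection directly on each summand $U_j$, using only that on every $U_j$ at least one of $R_1, R_i$ acts trivially, together with faithfulness of $G$ on $V$. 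This bypasses the need for the preliminary identifications $C_V(H)=U_1$ and $N_H(U)^\circ=K$, so your route is slightly shorter; the paper's route, on the other hand, records those identifications explicitly, which may be conceptually useful elsewhere.
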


\bpr Now we pass the information obtained in Lemma~\ref{lemma:identifications} from $H$ to $G$. As an immediate corollary, we have $C_V(R) = U_1$, hence $C_V(K) = C_V(H) = U_1$. Next, $N_H(U)^\circ = K$ by Fact~\ref{bbpseudo:cor1.3}.

By Lemma \ref{lemma:weight-decomposition}, the subspaces $U_1,U_2,\dots, U_n$ are transitively permuted by the group $\Sigma$. Therefore there is a Morley rank 1 torus $R_1$ conjugate to $R_2$, say, by action of $\Sigma$. Then by Lemma \ref{lemma:identifications}, $[U_1, R_1] = U_1$ and $[U,R_1]=0$;  in particular, $R_1$ normalises $U_1$ and $U$. Also, $R_1$ normalises $H = C_G(U_1)$ and therefore $K= N_H(U)^\circ$. Consider the group $L=R_1K$ and notice that $R_1\cap K =1$.

Now note that
\[
T = N_L(U_1) \cap N_L(U_2) \cap \cdots \cap N_L(U_n)= R_1 \times R
\]
is a torus of rank $n$.
\epr

Recall that by Lemma~\ref{lemma:irreducible}, $G$ acts on $V$ irreducibly, hence we can apply Fact~\ref{bbpseudo:thm1.4} and complete the proof. \hfill $\square$

\section*{Acknowledgements}

This paper would have never been written if the authors did not enjoy the warm hospitality offered to them at the Nesin Mathematics Village in \c{S}irince, Izmir Province, Turkey, in summers of 2012 and 2017; our thanks go to Ali Nesin and to all volunteers and staff in the Village.

We thank Gregory Cherlin, Adrien Deloro,  Omar Leon Sanchez, and Dugald Macpherson for fruitful advice, David Pierce for his help with \LaTeX\ and copy-editing, and the anonymous referee, whose comments helped to improve the text.

\section*{References}

\providecommand{\bysame}{\leavevmode\hbox to3em{\hrulefill}\thinspace}
\providecommand{\MR}{\relax\ifhmode\unskip\space\fi MR }

\providecommand{\MRhref}[2]{%
  \href{http://www.ams.org/mathscinet-getitem?mr=#1}{#2}
}
\providecommand{\href}[2]{#2}

\end{document}